%%%%%%%%%%%%%%%%%%%%%%%%%%%%%%%%%%%
%%%%% DIMA  %%%%%%%%%%%
%%%%%%%%%%%%%%%%%%%%%%%%%%%%%%%%%%%
\documentclass[11pt,reqno]{amsart}
\usepackage[mathscr]{eucal}
\usepackage{mathrsfs}
\usepackage{amsfonts}
\usepackage{amsmath}
\usepackage{amsthm}
\usepackage{amssymb}
\usepackage{latexsym}
\usepackage[all]{xy}
\usepackage{array}
\pagestyle{plain}
%  array provides more column styles for the tabular and array environments.
%  (http://ctan.org/pkg/array)
%%%%%%%%%%%%%%%%%%%%%%%%%%%%%%%%%%%%%%%%%%%%

%  graphicx provides advanced graphics options.
%  (http://ctan.org/pkg/graphicx)
%
\usepackage{graphicx}
\usepackage[dvipsnames]{xcolor}
\newcommand{\red}[1]{{\color{red}#1}}
\newcommand{\blue}[1]{{\color{blue}#1}}

%%%%%%%%%%%%%%%%%%%%%%%%%%%%%%%%%%%%%%%%%%%%
\usepackage{mathtools}

\usepackage{latexsym}
\usepackage[mathscr]{eucal}
\usepackage{palatino}
%\usepackage{yfonts}
%EXAMPLE \textfrak{ch tz ck} 

\hyphenation{equi-va-riant}
\usepackage[bookmarks=false]{hyperref}
\setcounter{tocdepth}{1}

\usepackage{hyperref}
\usepackage{xr-hyper}

%%%%%%%%%%%%%%%%%%%%%%%%%%%%%%%%%%%%%%%%%%%%
%\usepackage{showkeys}

\usepackage{graphicx}

\theoremstyle{plain}
\newtheorem{theorem}[equation]{Theorem}
\newtheorem{corollary}[equation]{Corollary}
\newtheorem{proposition}[equation]{Proposition}
\newtheorem{lemma}[equation]{Lemma}

\theoremstyle{definition}

\newtheorem*{definition}{Definition}

\newtheorem{rem}[equation]{Remark}

\newtheorem{claim}[equation]{Claim}

\makeatletter
\renewcommand{\subsection}{\@startsection{subsection}{2}{0pt}{-3ex
plus -1ex minus -0.2ex}{-2mm plus -0pt minus
-2pt}{\normalfont\bfseries}} \makeatother

\numberwithin{equation}{subsection}
\allowdisplaybreaks

\newlength{\dhatheight}

\setlength{\oddsidemargin}{.03in} \setlength{\evensidemargin}{.01in}

%\newtheorem{theorem}{Theorem}[section]
%\newtheorem{lemma}[theorem]{Lemma}
%\newtheorem{corollary}[theorem]{Corollary}
%\newtheorem{proposition}[theorem]{Proposition}
%\newtheorem{conjecture}[theorem]{Conjecture}

%\theoremstyle{definition}
%\newtheorem{definition}[theorem]{Definition}
%\newtheorem{example}[theorem]{Example}
%\newtheorem{axiom}[theorem]{Axiom}
%\newtheorem{question}[theorem]{Question}

%\theoremstyle{remark}
%\newtheorem{remark}[theorem]{Remark}

%\numberwithin{equation}{section}
\DeclareMathOperator{\id}{\mathrm{Id}}
\DeclareMathOperator{\mmod}{ \operatorname{\textsl{mod}} }
\DeclareMathOperator{\supp}{\mathrm{Supp}}

\DeclareMathOperator{\Ker}{\mathrm{Ker}}
\DeclareMathOperator{\ann}{\mathrm{Ann}}
\DeclareMathOperator{\Ext}{\mathrm{Ext}}

\DeclareMathOperator{\Tor}{{\mathscr{T}}\!{\textit{or}}}

\newcommand{\per}{{\operatorname{per}}}
\newcommand{\hdot}{{\:\raisebox{2pt}{\text{\circle*{1.5}}}}}

\renewcommand{\mod}{{\,\operatorname{\textsl{mod}}\ }}
\newcommand{\dr}{{\operatorname{DR}}}
\newcommand{\erem}{\hfill$\lozenge$\end{rem}}
\newcommand{\eerem}{\hfill$\lozenge$\end{rem}\vskip 3pt }
\newcommand{\dis}{\displaystyle}

\newcommand{\beq}{\begin{equation}\label}
\newcommand{\eeq}{\end{equation}}
\newcommand{\hb}{{\hbar}}
\newcommand{\ohb}{{\oo_\hbar}}
\newcommand{\wh}{\widehat }
\newcommand{\wt}{\widetilde }
\newcommand{\fra}{\mathfrak }
\newcommand{\cal}{\mathcal }
\newcommand{\A}{{\ca A}}
\newcommand{\GG}{{\ca G}}
\newcommand{\Om}{\Omega }
\newcommand{\oo}{{\mathcal O}}

\renewcommand{\k}{\Bbbk }
\newcommand{\kt}{{\Bbbk^\times}}
\newcommand{\iso}{{\;\stackrel{_\sim}{\to}\;}}

\def\ccirc{{{}_{\,{}^{^\circ}}}}
\newcommand{\bo}{\mbox{$\bigotimes$}}
\renewcommand{\o}{\otimes }

\newcommand{\en}{{\enspace}}

\newcommand{\vi}{${\en\sf {(i)}}\;$}
\newcommand{\vii}{${\;\sf {(ii)}}\;$}

\newcommand{\half}{\mbox{$\frac{1}{2}$}}
\newcommand{\om}{\omega }

\newcommand{\la}{\langle }
\newcommand{\ra}{\rangle }
\newcommand{\al}{\alpha }
\renewcommand{\a}{{\mathfrak a}}
\newcommand{\g}{{\mathfrak g}}
\newcommand{\hc}{ Harish-Chandra }
\newcommand{\dd}{{{}^{^{_\D\!}\!}\delta}}
\newcommand{\um}{{}^{^{_\mm\!}\!}}
\newcommand{\ud}{{}^{^{_\D\!}\!}}
\newcommand{\md}{{{}^{^{_\mm\!}\!}\delta}}
\newcommand{\pp}{{\mathcal P}}
\newcommand{\forget}{{\mathsf{F}}}
\newcommand{\sset}{\subset}
\newcommand{\sminus}{\smallsetminus}
\newcommand{\intoo}{\,\xymatrix{\ar@{^{(}->}[r]&}\,}
\newcommand{\ontoo}{\,\xymatrix{\ar@{->>}[r]&}\,}
\newcommand{\into}{\,\hookrightarrow\,}
\newcommand{\too}{\,\longrightarrow\,}
\newcommand{\mto}{\mapsto}
\newcommand{\onto}{\,\,\twoheadrightarrow\,\,}
\newcommand{\eps}{\varepsilon }
\renewcommand{\v}{{\mathfrak v}}
\newcommand{\y}{{\mathfrak x}}

\newcommand{\fp}{{\mathfrak p}}
\newcommand{\fu}{{\mathfrak u}}
\newcommand{\gl}{{\mathfrak{gl}}}
\newcommand{\pa}{\partial }
\newcommand{\Oom}{{\Omega^{\geq 1}_Y}}
\newcommand{\pr}{{\mathrm{pr}}}
\newcommand{\io}{{\iota}}

\newcommand{\D}{\mathcal{D}}

\newcommand{\G}{\mathfrak{G}}
\newcommand{\T}{\mathcal T}
\newcommand{\M}{\mathcal M}
\newcommand{\J}{\mathcal{J}}
\newcommand{\mm}{\mathcal M}
\newcommand{\ca}{\mathcal}

\newcommand{\oa}{{\overline{\Aut}(\D,\mm)}}
\newcommand{\od}{{\overline{\Der}(\D,\mm)}}

\newcommand{\li}{{\geq i}}

\renewcommand{\sp}{{\mathfrak{sp}}}

\newcommand{\oh}{\mbox{$\frac{1}{\hb}$}}

\DeclareMathOperator{\Loc}{\mathrm{Loc}}

\DeclareMathOperator{\Der}{\mathrm{Der}}
\DeclareMathOperator{\Aut}{\mathrm{Aut}}

\DeclareMathOperator{\Tr}{\mathrm{Tr}}
\DeclareMathOperator{\Lie}{\mathrm{Lie}}

\begin{document}

\title{Quantization of Line Bundles on\\
 Lagrangian Subvarieties}
\author{Vladimir Baranovsky}
\address{V.B.:
Department of Mathematics, University of California at Irvine, 340 Rowland 
Hall, Irvine, CA 92617, USA.}
\email{vbaranov@math.uci.edu}
\author{Victor Ginzburg}
\address{V.G.:
Department of Mathematics, University of Chicago,  Chicago, IL 
60637, ~USA.}
\email{ginzburg@math.uchicago.edu}
\author{Dmitry  Kaledin}
\address{D.K.: 
Steklov Mathematical Institute, Algebraic Geometry Section,
Gubkina, ~8, Moscow, 119991, Russia.}
\email{kaledin@mi.ras.ru}
\author{Jeremy Pecharich}
\address{J.P.: 
Department of Mathematics, Pomona College, 640 North College Avenue, Claremont, CA 91711, USA}
\email{jpechari@gmail.com}
\dedicatory{To Volodya Drinfeld on the occasion of his 60th Birthday}
\begin{abstract}
We apply the technique of formal geometry to give a necessary and
sufficient condition for a line bundle supported on a smooth Lagrangian
subvariety to deform to 
a sheaf of modules over a fixed deformation quantization of the structure sheaf of an algebraic symplectic variety. 
\end{abstract}
\maketitle

{\small
\tableofcontents
}
\section{Introduction}
%sss
\subsection{}
Let $X$ be a smooth algebraic symplectic variety over a field $\k$ of
characteristic zero. Let $\om$ denote the  symplectic 2-form 
and $\{-,-\}$ the associated Poisson bracket on  $\oo_X$, the
 structure sheaf of $X$. 
A formal quantization of $X$ is, by definition, a
 sheaf $\oo_\hb$ on $X$ (in the Zariski topology)
 of  flat associative $\k[[\hb]]$-algebras (which is complete and separated in the linear
topology of a  $\k[[\hb]]$-module), 
equipped with an isomorphism $\oo_\hb/\hb\oo_\hb\cong \oo_X$
and such that $\frac{1}{\hb}(ab-ba)\mmod\hb=\{a\mmod\hb,\, b\mmod\hb\}$ for
all $a,b\in\oo_\hb$.

We will be interested in the problem
of  quantization of $\oo_X$-modules. 
Thus,  given  a coherent $\oo_X$-module $\cal L$ and a formal quantization
$\ohb$ of $X$, we are looking for $\ohb$-modules $\cal L_\hb$, flat over 
$\k[[\hb]]$, 
such that $\cal L_\hb/\hb\cal L_\hb\cong \cal L$.
A necessary condition for the existence of such an $\cal L_\hb$
is provided by
the fundamental {\em integrability of characteristics} theorem,
due to Gabber \cite{Ga}. It says that if $\cal L$ admits a flat deformation
to an $\oo_\hb/\hb^3\oo_\hb$-module then the (smooth locus of) every irreducible component of
$\supp\cal L$, the support of the coherent sheaf $\cal L$,
must be a coisotropic subvariety of $X$. 

In this paper
  we restrict ourselves to a special case where the support of
$\cal L$ is a  smooth Lagrangian subvariety $Y\sset X$ and, moreover,
$\cal L$  is a direct image to $X$
of  (the sheaf of sections of) a line bundle $L$ on $Y$.
Our main result provides a complete classification
of  all formal quantizations $\cal L_\hb$, of $\cal L$,  in terms of the  
Atiyah-Chern class
of $L$ and a `noncommutative period map'
introduced by Bezrukavnikov and Kaledin ~\cite{BK}.

In more detail,   let $Y$ be  a  smooth Lagrangian subvariety of $X$,
let $\Om^{\geq 1}_Y$ denote
the truncated de Rham complex
$0\to \Om^1_Y\to\Om^2_Y\to\ldots$,
 a subcomplex of the algebraic de Rham complex
$(\Om^\hdot_Y,d)$. Thus, one has a canonical
map $H^2(\Oom)=H^2(Y,\Oom)\to H^2(Y,\Om^\hdot_Y)=H^2_\dr(Y),\ \al\mto \al_\dr$.
For any line bundle $L$ on $Y$ there is 
an associated Atiyah class $c_1(L)\in H^2(\Oom)$
such that its image $c_1(L)_\dr$ is the usual first Chern
class of $L$.

In section \ref{J-sec} we construct a class $At(\oo_\hb,Y)\in H^2(Y,\Oom)$ canonically associated
with any quantization $\ohb$ of $\oo_X$.
This class corresponds to a natural Atiyah algebra
\beq{pic_J}0\to\oo_Y\to
\Tor^{\oo_\hb}_1(\oo_Y,\oo_Y)\to\T_Y\to0,
\eeq
where $\oo_Y$ is viewed as an $\oo_\hb$-module
via the projection $\oo_\hb\to\oo_X$.

%Further, in section \ref{J-sec} we construct a class $At(\oo_\hb,Y)\in H^2(Y,\Oom)$ canonically associated
%with any quantization $\ohb$ of $\oo_X$
%(this class corresponds to the Atiyah algebroid 
%$\J_Y/\J_Y^2$ where $\J_Y$ is the preimage of the
%ideal sheaf of $Y$ in $\oo_\hb$. Its Atiyah algebra structure 
%actually depends on $\ohb/\hb^3\ohb$ only).

On the other hand, Bezrukavnikov and Kaledin have constructed, see
 \cite[Definition 4.1]{BK}, a class
\beq{per}\operatorname{per}(\ohb)=[\om] + \hb\omega_1(\ohb) + \hb^2 \omega_2(\ohb) + \hb^3 \omega_3(\ohb) +
\ldots\  \in H^2_\dr(X)[[\hb]],
\eeq
%=:[\om] + \hb \omega (\hb) \in H^2_\dr (X)[[\hb]]
where $[\om]$ stands for the de Rham cohomology class
of the closed 2-form $\om$ and $\om_i(\ohb)\in H^2_\dr(X)$. We show 
in Lemma \ref{alat} that the two constructions
are compatible in the sense that one has
\beq{compat}
At(\ohb,Y)_\dr=i^*_Y(\om_1(\ohb)),
\eeq
where $i^*_Y:\ H^2_\dr(X)\to H^2_\dr(Y)$ is the restriction
map induced by the imbedding $i_Y: Y\into X$.

Now, let $K_Y=\Om^{\dim Y}_Y$ denote the canonical bundle.
Our main result reads

\begin{theorem}
\label{maintheorem}
Let  $(X,\om)$ be an algebraic symplectic manifold, let
$i_Y: Y\into X$ be  a  closed imbedding of a smooth Lagrangian
subvariety, and
let $L$ be the sheaf of sections of a line bundle on $Y$. Then, we have

\vi The sheaf $(i_Y)_* L$  admits a quantization, i.e. there exists
a complete flat left $\ohb$-module $\cal L_\hb$ such that
$\cal L_\hb/\hb \cal L_\hb\cong (i_Y)_* L$  if and only if the following 
two conditions hold:
\begin{align*}
&c_1(L) - \frac{1}{2} c_1(K_Y) =
At(\ohb,Y)\quad\text{holds in}\en H^2(\Om^{\geq1}_Y);\\
&i_Y^* \omega_i(\ohb)=0\en\text{holds in}\en  H^2_\dr(Y), \quad
\forall\en i\geq 2.
\end{align*}

\vii 
If the set $Q(X,\om, Y)$ of isomorphism classes of  quantizations 
of line bundles on $Y$ is non-empty, then this set
affords a free and
transitive action of the group of isomorphism classes of $(\mathcal{O}_Y[[\hb]])^*$-torsors on $Y$
with a flat algebraic connection. 
\end{theorem}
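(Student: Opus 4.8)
The strategy is to prove (i) first and then use the analysis from its proof to obtain (ii) as a corollary describing the "space of solutions." For (i), I would work with a local-to-global / deformation-theoretic approach. Fix a formal quantization $\ohb$ of $\oo_X$. The obstruction to deforming $(i_Y)_*L$ to an $\ohb$-module should be analyzed order by order in $\hb$: given a flat $\oo_\hb/\hb^{n+1}$-module lifting $(i_Y)_*L$, the obstruction to lifting it one step further lives in $\Ext^2_{\oo_X}((i_Y)_*L,(i_Y)_*L)$, and by the standard formula for Ext between pushforwards of sheaves along a closed embedding (using that $Y$ is Lagrangian, so the normal bundle $N_{Y/X}\cong \Om^1_Y$ and $\det N_{Y/X}\cong K_Y$), one gets a Hodge-type decomposition of this Ext group. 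The plan is to identify the first-order obstruction with the class $c_1(L) - \tfrac12 c_1(K_Y) - At(\ohb,Y) \in H^2(\Om^{\geq 1}_Y)$ — the correction term $\tfrac12 c_1(K_Y)$ arising, as in the theory of $\mathcal D$-modules and the Duistermaat–Heckman/half-form twist, from the fact that the symbol of the module is really a half-density on $Y$ — and the higher obstructions (for $n\geq 2$) with the classes $i_Y^*\om_i(\ohb)\in H^2_\dr(Y)$, using the definition and naturality of the Bezrukavnikov–Kaledin period map \eqref{per} together with compatibility \eqref{compat}. The technique of formal geometry (Gelfand–Fuks / Fedosov-type resolutions over $Y$) is what makes these identifications clean: one replaces $\ohb$ near $Y$ by its formal germ, which is classified by the period, and the module by a Fedosov-type flat connection on a bundle of modules over the Weyl algebra, whose existence is governed exactly by the stated cohomological equations.

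**Deducing (ii) from the analysis.** Assume $Q(X,\om,Y)\neq\emptyset$; pick a base point $\cal L_\hb$. Given any other quantization $\cal L'_\hb$ of a (possibly different) line bundle on $Y$, I would compare them: since both reduce mod $\hb$ to pushforwards of line bundles on $Y$ and the cohomological conditions in (i) are satisfied by both, the two line bundles on $Y=\supp$ must have the same Atiyah class modulo the constraint, hence differ by a line bundle with flat connection; more precisely, $\mathcal{H}om_{\oo_\hb}(\cal L_\hb, \cal L'_\hb)$ is a coherent $\oo_\hb$-module which, after a local analysis, is locally free of rank one over $\oo_Y[[\hb]]$ and carries a canonical flat connection (coming from the $\oo_\hb$-bimodule structure / the fact that $\oo_\hb$ locally acts by "all of $\D_Y[[\hb]]$" on such modules). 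This assignment $\cal L'_\hb \mapsto \mathcal{H}om_{\oo_\hb}(\cal L_\hb,\cal L'_\hb)$ should be the desired bijection onto the set of isomorphism classes of $(\oo_Y[[\hb]])^*$-torsors with flat algebraic connection; conversely, tensoring $\cal L_\hb$ with such a torsor-with-connection produces a new quantization. Freeness and transitivity of the action are then formal: transitivity is exactly the statement that the $\mathcal{H}om$ above always has this form, and freeness follows because tensoring with a nontrivial flat line bundle changes the module up to isomorphism.

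**Main obstacle.** The genuinely hard part is the order-by-order obstruction calculus of (i): proving that the \emph{first} obstruction is precisely $c_1(L)-\tfrac12 c_1(K_Y)-At(\ohb,Y)$ with the correct $\tfrac12 K_Y$ shift, and that once this vanishes the \emph{entire} remaining obstruction tower collapses onto the de Rham classes $i_Y^*\om_i(\ohb)$ with no cross-terms or secondary obstructions. This requires the formal-geometry machinery to organize the deformation problem so that the module's "Fedosov connection" exists iff a flat connection on the associated Atiyah algebra \eqref{pic_J}-type extension does, and that is where \cite{BK} and the compatibility Lemma \ref{alat} do the real work. A secondary subtlety in (ii) is checking that the flat connection on $\mathcal{H}om_{\oo_\hb}(\cal L_\hb,\cal L'_\hb)$ is genuinely \emph{algebraic} (not merely formal in $\hb$) and well-defined globally, i.e. independent of local trivializations — this is again handled by the formal geometry, since the connection is canonically determined by the bimodule structure.
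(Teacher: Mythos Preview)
Your proposal contains a genuine conceptual gap in part (i). You propose an order-by-order analysis in $\hb$: lift $(i_Y)_*L$ to a flat $\oo_\hb/\hb^{n+1}$-module, identify the obstruction to the next lift in an $\Ext^2$ group, and then claim that the first obstruction is $c_1(L)-\tfrac{1}{2}c_1(K_Y)-At(\ohb,Y)$ while the higher ones are exactly $i_Y^*\om_i(\ohb)$. The paper's own \S6.3 (``Final remarks'') explains why this does \emph{not} work: for finite $s$, the existence of a flat $\oo_\hb/\hb^{s+1}$-module $L_s$ is only squeezed between the conditions $(\star_{s+1})$ and $(\star_{s-1})$, and neither implication is an equivalence. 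The $\hb$-adic filtration on $\D$ and the filtration $F_\y^{\geq i}\D$ relevant to the module problem are different, so the step-by-step obstructions mix Hodge pieces of the conditions in the theorem in a nontrivial way. Your hope that ``the entire remaining obstruction tower collapses onto $i_Y^*\om_i(\ohb)$ with no cross-terms'' is precisely the statement that fails for truncated deformations; one only recovers the clean answer by working with the full formal problem, and for that the order-by-order bookkeeping is the wrong organizing principle.

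The paper instead translates the problem, via Lemma~\ref{wellknown} and Lemma~\ref{quantizelift}, into lifting the transitive Harish--Chandra torsor $\pp_\J$ over $\langle\Aut(\D)_\J,\Der(\D)_\J\rangle$ to one over $\langle\Aut(\D,\mm),\Der(\D,\mm)\rangle$, and then splits this lift into \emph{two} steps (not into $\hb$-orders): first to the quotient pair $\langle\oa,\od\rangle$, whose obstruction is identified (Lemma~\ref{loc_j}, Lemma~\ref{step1}) with $i_Y^*(\hb^2\om_2+\hb^3\om_3+\ldots)$; and second across the central $\langle\k^\times,\k\rangle$-extension, which by Lemma~\ref{bij} is governed by an equation $c_1(L)=\alpha(\bar\pp_{\D,\mm},c_{\Der},\io_{\Der})$ in $H^2(\Om^{\geq1}_Y)$. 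The $\tfrac{1}{2}c_1(K_Y)$ appears not from any half-density heuristic but from a concrete computation (Lemma~\ref{1m}, equation~\eqref{error}, Proposition~\ref{jdm}): the two natural Levi splittings of $\G_\J^{\geq0}$ (one coming from $\GG_\J$, the other from $\Aut(\D,\mm)$) differ by the character $a\mapsto\tfrac{1}{2}\Tr(a|_\y)$ of $\fp$, and that discrepancy is exactly $\tfrac{1}{2}c_1(K_Y)$ after descent. Your sketch never isolates this mechanism.

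For part (ii) your $\mathcal{H}om_{\oo_\hb}(\cal L_\hb,\cal L'_\hb)$ idea is reasonable in spirit, but you would need to prove that this sheaf is locally free of rank one over $\oo_Y[[\hb]]$ and carries a canonical flat algebraic connection, neither of which is immediate. The paper again works on the torsor side: having fixed one lift $\pp_{\D,\mm}$, any other lift differs by twisting by a flat $(\oo_Y[[\hb]])^\times$-torsor, decomposed as $(L_0,L_1)$ along $\oo_Y^\times\times(1+\hb\oo_Y)$, with uniqueness coming from \cite[Proposition~2.7]{BK} and Lemma~\ref{bij}.
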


For a line bundle $L$ the Atiyah-Chern class $c_1(L)\in H^2(\Om^{\geq1}_Y)$
 measures
the obstruction to the existence of a flat algebraic connection on $L$. Therefore, 
in a special case where $At(\ohb,Y) =0$ and $i_Y^* \omega_i(\ohb)=0, 
i \geq 2$,
our theorem claims that $L$ admits a deformation 
quantization if and only if the line bundle $L^{\otimes 2} \otimes K_Y^\vee$ has a
flat algebraic connection. In such a case,
one says, abusing terminology somewhat,  that
$L$ is a square root of the canonical class. 

The existence of  quantization for  square roots of
the canonical bundle seems to have been first discovered  (without proof)
by M. Kashiwara \cite{Ka}, in the framework
of complex analytic contact  geometry.
Later on,  D'Agnolo and Schapira \cite{DS} established
a similar result for
Lagrangian submanifolds of  a complex analytic
symplectic manifold. In the $C^\infty$-context, 
some closely related constructions can be found
in the work of Nest and Tsygan \cite{NT}.

\begin{rem}
Our main result can be applied in a slightly more general setting where
$Y$ is 
a (possibly singular)
{\em normal} subvariety of $X$
such that $Y^{\text{reg}}$, the smooth locus of $Y$,
is a Lagrangian submanifold,
and where   $L$ is  a coherent $\oo_Y$-module
 isomorphic to 
the direct image of a line bundle $L^{\text{reg}}$ 
on  $Y^{\text{reg}}$. In this case, our theorem 
tells when $L^{\text{reg}}$ admits a deformation quantization
$\cal L^{\text{reg}}_\hb$. This is a sheaf on $X\sminus(Y\sminus
Y^{\text{reg}})$
and the direct image of $\cal L^{\text{reg}}_\hb$
to $X$ is a coherent $\oo_\hb$-module,
since $\dim(Y\sminus Y^{\text{reg}})\leq \dim Y-2$.
It is clear  that the latter module provides  a deformation
quantization of 
the original sheaf $L$. 
\end{rem}
%on deformation quantization 
%of Lagrangian submanifolds, cf. \cite{DS}. Our result does not follow from 
%the complex analytic case due to the failure of the Darboux Lemma in 
%the algebraic setting, and also because we consider a more general 
%deformation of the structure sheaf on $X$. 

\subsection{Acknowledgements}
We are grateful to Dima Arinkin for helpful remarks
and to Pierre Schapira for  historical comments.

The first author was supported by a Simons Foundation Collaboration 
Grant. 
The second author  was supported in part by the NSF grant DMS-1001677.
The third author was partially supported by the RFBR grant
~{12-01-33024}, Russian Federation government grant,
ag. 11.G34.31.0023, and the Dynasty Foundation Award. 
The fourth author would  like to thank K. Behrend and B. Fantechi
for many inspiring discussions on quantization. He is grateful to
 the Mathematical Science Research
Institute, Berkeley, for hospitality
during his stay in the Spring of 2013.
The work on the paper was also partially supported  by the NSF Grant
DMS-0932078 000.

\section{Basic constructions}
\label{sec2}

\subsection{}\label{setup}
Fix $n\geq 1$ and a $2n$-dimensional vector space $\v$
equipped with a symplectic form  $\om\in \wedge^2(\v^*)$.

Associated with $\om$, there is a Heisenberg Lie algebra
with an underlying vector space
$\v\oplus\k\hb$, where $\hb$ denotes
a fixed base element, and the Lie bracket \red{is} defined by the
formulas 
\[[x,y]=\om(x,y)\cdot\hb,\quad [x,\hb]=0,\quad\forall x,y\in \v.\]

Let $D$ be the universal enveloping algebra of this  Heisenberg Lie
algebra. Thus, $D$ is  an associative $\k[\hb]$-algebra,
also known as the `homogeneous version of the Weyl 
algebra'. 
The algebra $D$ comes equipped with a natural grading 
$D=\oplus_{i\geq 0}\ D^i$ such that the vector space
$\v$ is placed in degree 1
and the element $\hb$ is assigned grade degree 2. 
Let $\D=\prod_{i\geq 0}\ D^i$ and, for any 
$j\geq 0$, put $\D^{\geq j}=\prod_{i\geq j}\ D^i$.
Thus, $\D$ is an associative $\k[[\hb]]$-algebra
equipped with a descending filtration
$\D=\D^{\geq 0}\supset \D^{\geq 1}\supset\ldots$,
by two-sided ideals. This filtration
makes $\D$ a
 complete topological algebra with $\D^{\geq 1}$ being
the unique maximal ideal of $\D$.
%One has a canonical isomorphism  $\ca A = \D/\hb\D$, so one may
%view $\D$ as a  quantization of $\ca A$.

Let $\oh\D$ denote a free rank one $\D$-submodule of $\k((\hb))\o_{\k[[\hb]]}\D$
generated by the element $\oh$.
Let $\G^i:=\oh \D^{\geq i+2}$. 
The commutation relations in $\D$ imply that
$[\G^i,\G^j]\sset \G^{i+j}$, where
$[a,b]=ab-ba$ denotes the commutator in the algebra
$\k((\hb))\o_{\k[[\hb]]}\D$, so that $\G$ is a graded Lie algebra.
We have
$\G^{-2}=\k$ and  $\G^{-1}=\v$.
The vector space $\G^{-1}\oplus \G^{-2} \subset \G$ is a Lie subalgebra
isomorphic to the Heisenberg algebra.

The homogeneous component $\G^0 \subset \G$ is also
a Lie subalgebra. There is
a canonical direct sum decomposition
$\G^0=\k_\G\oplus[\G^0,\G^0]$, where
the first summand, the image of the 
imbedding $\eps_\G:\k\into \G^0,\ c\mto \oh(c\hb)$,
is the center of the Lie algebra $\G^0$.
The commutator map $[-,-]:\ \G^0\times \G^{-1}\to\G^{-1}$
gives an action of the Lie algebra $\G^0$ on
$\G^{-1}=\v$. The center $\k_\G\sset\G^0$ acts trivially;
the action of the second summand yields
a canonical Lie algebra isomorphism
\beq{sigma}
\sigma:\ \sp(\v)\ \iso\ [\G^0,\G^0]\sset \oh \D^2,
%=\oh D^2,\en xy\mto \mbox{$\frac{1}{2\hb}$}(xy+yx),
\eeq
where $\sp(\v)$ is the Lie algebra of the
symplectic group $Sp(\v)$.

We put $\G=\oh\D/\oh\k$, a quotient
of the Lie algebra $\oh\D$ by a central
subalgebra. The filtration on $\D$ induces 
 a descending filtration
$\G=\G^{\geq -1}\supset\G^{\geq 0}\supset\G^{\geq 1}\supset\ldots$,
where  $\G^\li=\prod_{j\geq i}\ \G^j$.
Here, $\G^{\geq0}$ is a Lie subalgebra of $\G$
and $\G^{\geq1}$ is a pronilpotent
Lie ideal of  $\G^{\geq0}$, moreover,  the natural
map $\G^0\to\G$ provides  a canonical
`Levi decomposition': 
\beq{levi}
\G^{\geq0}=\k_\G\ \oplus\ \big(\sp(\v)\ltimes\G^{\geq1}\big).
\eeq

\subsection{}
We recall the following definition, see \cite{BB}.

\begin{definition}
A \emph{Harish-Chandra pair} over $\k$ is a pair $\langle G, \frak
h\rangle$ \red{where} \blue{with} $G$
\red{is}  a connected affine (pro)algebraic group, $\frak h$ \red{is} a Lie
algebra equipped with a $G$-action, and \red{with a Lie algebra}
\blue{an} embedding $\frak
g\to \frak h$ of the Lie algebra $\frak g$ of $G$ such that the adjoint
action of $\frak g$ on $\frak h$ \red{via the imbedding equals}
\blue{is} the differential of the given
$G$-action.  
\end{definition}

The reason for introducing the notion of a Harish-Chandra pair is that
there exist infinite dimensional Lie algebras which cannot be
integrated to algebraic groups. However, they have Lie subalgebras which
can be integrated to an algebraic group. 
\red{Here is an important example.}

Let $\Der(\D)$ denote the Lie algebra of
$\k[[\hb]]$-linear continuous derivations of
the ring $\D$ and  $\Der^{\geq j}(\D)\sset \Der(\D)$  the space
of derivations $\delta: \D\to\D$
such that $\delta(\D^{\geq1})\sset \D^{\geq j+1}$.
It is clear that $\Der^{\geq 0}(\D)$ is a Lie
subalgebra and $\Der^{\geq1}(\D)$ is a pronilpotent Lie ideal
of that subalgebra.

Let $\Aut(\D)$ be the group of $\k[[\hb]]$-linear automorphisms
of the algebra $\D$. The group $\Aut(\D)$ has the natural
structure of a proalgebraic group with Lie algebra
$\Lie\Aut(\D)=\Der^{\geq0}(\D)$. 
The pair $\la\Aut(\D),\ \Der(\D)\ra$ is  a Harish-Chandra pair.

The group $Sp(\v)$ acts on the Heisenberg algebra
$\v\oplus\k\hb$,
hence also on the objects $\D,\G,\G^{\geq1}$, and $\GG^{\geq1}$, by
automorphisms.
In particular, one has a natural group
homomorphism $\Theta_\D: Sp(\v)\to\Aut(\D)$
and an associated  Lie algebra homomorphism 
$\theta_\D: \sp(\v)\to \Der(\D)$.
It is well-known, and easy to verify, that 
the map $\theta_\D$ is related to the map in \eqref{sigma}
by the equation
\beq{sisi}
\theta_\D(a)(x)=[\sigma(a), x],
\qquad\forall a\in\sp(v), x\in\oh\D.
\eeq

Next, let $\GG^{\geq1}$ be a  prounipotent group
associated with $\G^{\geq1}$, a pronilpotent Lie
algebra. We have the natural central imbedding
$\hb\k[[\hb]] \into \G^{\geq 1}$,
of pronilpotent Lie algebras. This induces
an injective  homomorphism $1+\hb\k[[\hb]] \into \GG^{\geq 1}$,
of the corresponding prounipotent groups.

Folowing \cite{BK}, we consider the group
$\GG:=\k^\times_\GG\ \times\ \big(Sp(\v)\ltimes\GG^{\geq1}\big)$.
Here, the semidirect product $Sp(\v)\ltimes\GG^{\geq1}$ is taken with respect to
the natural $Sp(\v)$-action on  $\G^{\geq1}$
by automorphisms. The group $\GG$ has the structure of a proalgebraic group.

A  cartesian product of the natural maps $\k\to\k_\GG$
and $1+\hb\k[[\hb]] \to \GG^{\geq 1}$ provides, via the
obvious
isomorphism $\k[[\hb]]^\times =\k^\times \times (1+\hb\k[[\hb]])$, 
a central imbedding $\k[[\hb]]^\times\into \GG$,
of proalgebraic groups.
By construction, one also has a canonical 
imbedding
$\Sigma: Sp(\v)\into\GG$.
One may view the group $\k^\times_\GG\times \Sigma(Sp(\v))$ as
a Levi subgroup of $\GG$.
By \eqref{levi},
we have an isomorphism
$\Lie\GG = \k \oplus \big(Sp(\v)\oplus\G^{\geq1}\big)\cong \G^{\geq
  0}$.
Furthermore, it follows from equation \eqref{sisi} that the natural imbedding
$\Lie\GG\cong \G^{\geq 0}\into\G$ makes the pair
$\la \GG,\ \G\ra$ a  Harish-Chandra pair.

The space $\D$ is a Lie ideal in $\oh\D$.
Hence, there is a well-defined 
action $\oh\D\times\D\to\D,\ a\times x\mto [a,x]$.
This action descends to a Lie algebra
homomorphism $\varphi_\D:\ \G\to \Der(\D)$
with kernel  $\oh\k[[\hb]]/\oh\k=\k[[\hb]]$.
It is easy to see that for all $i\geq0$, one has
$\varphi_\D(\G^{\geq i})\sset\Der^{\geq i}(\D)$.
For $i=1$, exponentiating the map $\G^{\geq 1}\to \Der^{\geq
  1}(\D)$,
of pronilpotent Lie algebras, yields a homomorphism
$\Phi_\D^1: \GG^{\geq 1}\to \Aut^{\geq 1}(\D)$,
of the corresponding prounipotent groups. 
One can further extend the latter homomorphism
to a homomorphism 
\[\Phi_\D:\ \GG=\k^\times_\GG\, \times\,
\big(Sp(\v)\ltimes\GG^{\geq1}\big)
\ \to\ \Aut(\D),\quad c\times (g\ltimes u)\mto \Theta(g)\Phi_\D^1(u).\]
The differential of $\Phi_\D$ agrees with the  homomorphism
$\varphi_\D: \G^{\geq 0}\to \Der^{\geq 0}(\D)$, of Lie algebras.
Moreover,
as has been observed in \cite{BK}, the  maps $\Phi_\D$ and $\varphi_\D$
give rise to a central
 extension
\beq{aut}
1 \to \la\k[[\hb]]^\times,\  \k[[\hb]]\ra\too \la\GG,\
\G\ra \stackrel{\la\Phi_\D,\varphi_\D\ra}\too
\la\Aut(\D),\ \Der(\D)\ra \to 1
\eeq
of Harish-Chandra pairs, see \cite[Section 3.3]{BK}.

\subsection{} 

It is often convenient to choose  a basis 
$x_1,\ldots, x_n,y_1,\ldots, y_n$, of $\v$, such that 
\beq{xy}\om(x_i,x_j)=0=\om(y_i,y_j),\qquad \om(x_i,y_j)=\delta_{ij},\quad
\forall i,j=1,\ldots,n.
\eeq 

The algebra $\D$ is (topologically) generated by $\hb$ and the 
basis elements of $\v$
 subject to the commutation
relations 
\[
[x_i,x_j]=0=[y_i,y_j],\quad [y_j, x_i] = \delta_{ij} \hb,\quad [\hb,
x_i] = [\hb, y_j]=0,\en\forall i,j.
\]
In particular, one has a canonical algebra imbedding
$\k[[x_1,\ldots, x_n,\hb]]\into \D$,
resp.
$\k[[y_1,\ldots, y_n,\hb]]\into \D$. 
Further,
one proves  the following  identity in $\D$: 
\beq{commutation}
y_i\cdot f- f\cdot y_i=\hb\cdot\pa_i f,\qquad
\forall f\in  \k[[x_1,\ldots,x_n]],\en i=1,\ldots,n,
\eeq
where we have used the notation $\pa_i=\frac{\pa}{\pa x_i}$.

Let $\A=\D/\hb\D$. This is a complete topological commutative algebra that comes
equipped with a Poisson bracket defined
by the formula 
\[\{a\mmod\hb,\ b\mmod\hb\}=(\oh [a,b])\mmod\hb,\qquad\forall
a,b\in\D.\]
There is a natural isomorphism
$\ca A\cong\k[[x_1,\ldots,
x_n,y_1,\ldots, y_n]]$, of topological algebras.
The resulting Poisson bracket on $\k[[x_1,\ldots,
x_n,y_1,\ldots, y_n]]$ has the standard form
\beq{bracket}
\{f,g\}\ =\ 
\sum_{1\leq i\leq n}\ \left(\frac{\pa f}{\pa x_i}\frac{\pa g}{\pa y_i}-
\frac{\pa f}{\pa y_i}\frac{\pa g}{\pa x_i}\right).
\eeq
The algebra $\D$ may be viewed as a quantization
of the Poisson algebra $\A$.

From now on, we fix a Lagrangian subspace $\y\sset\v$.
Let $\mm=\D/\D\y$. This is
 a  left $\D$-module and we have
$\mm/\hb\mm\cong\A/\A\y$ as
an $\A$-module. Let $1_\mm=1\mmod\D\y$ denote the
generator of $\mm$.

We will assume (as we may) that
the symplectic basis of $\v$, cf. \eqref{xy}, is chosen in such a way that
$y_1,\ldots,y_n$ is a basis of $\y$.
It follows readily that the composite
$\k[[x_1, \ldots, x_n, \hb]]\into\D\onto\D/\D\y=\mm$
is an isomorphism of $\k[[x_1, \ldots, x_n, \hb]]$-modules.
Using \eqref{commutation} one finds that the
action of $y_j$ on $\mm$ goes, via the isomorphism,
to the operator  $\hb \pa_i$ on $\k[[x_1, \ldots, x_n, \hb]]$.

The group $P:=\{g\in Sp(\v)\mid g(\y)\sset \y\}$
 is a parabolic subgroup of $Sp(\v)$.
Let
$\fp=\Lie P\sset\sp(\v)$
be the corresponding parabolic subalgebra,
 $\fu$ the nilradical of $\fp$.
Restriction to the subspace $\y\sset\v$ gives
a  map  $\fp\to\gl(\y),\ a\mto a|_\y$, that induces
a  canonical isomorphism 
$\fp/\fu\cong \gl(\y)$, of Lie algebras.

By \eqref{sigma}, we have the map $\sp(\v)\to \D^2,\ a\mto \hb\sigma(a)$.
The following  formula is well-known.

\begin{lemma}\label{1m} For any $a\in\fp$, we have
$\ (\hb\,\sigma(a))(1_\mm)=\frac{1}{2}\Tr(a|_\y)\cdot 1_\mm$.
\end{lemma}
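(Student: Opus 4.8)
The plan is to reduce the statement to an explicit calculation inside the Weyl algebra $\D$ acting on the module $\mm = \D/\D\y$, using the coordinate description from the preceding paragraphs. Recall we have fixed a symplectic basis $x_1,\dots,x_n,y_1,\dots,y_n$ of $\v$ with $y_1,\dots,y_n$ a basis of $\y$, and the isomorphism $\k[[x_1,\dots,x_n,\hb]]\iso\mm$, $f\mapsto f\cdot 1_\mm$, under which $y_j$ acts as $\hb\pa_j$. So it suffices to compute the operator $\hb\,\sigma(a)$ on $\k[[x_1,\dots,x_n,\hb]]$ for $a\in\fp$ and read off its action on the constant function $1$.

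First I would make $\sigma$ explicit. By \eqref{sigma}, $\sigma$ identifies $\sp(\v)$ with quadratic elements $\oh\D^2$, so $\hb\,\sigma(a)\in\D^2$ is a quadratic expression in $x_i,y_j$; it is the unique such element (up to the central ambiguity killed by passing to $\oh\D^2$, but here we work in $\D^2$ directly) whose commutator with $v\in\v$ reproduces $a(v)$, by \eqref{sisi}. Concretely, for $a\in\fp$ one may write $a$ in block form with respect to the decomposition $\v = \langle x_i\rangle \oplus \y$: the condition $a(\y)\subset\y$ means the lower-left block vanishes, so $a|_{\langle x_i\rangle}$ has a component $A=(A_{ij})$ along $\langle x_i\rangle$ (this is the "$\gl(\y)$-part" after the identification $\fp/\fu\cong\gl(\y)$, since the action on $\y$ is minus the transpose of the action on the $x$'s by the symplectic pairing), a component $B$ into $\y$ (the nilradical part), and $a|_\y$ determined by $A$. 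The second-order differential operator $\hb\sigma(a)$ then splits accordingly: the $B$-part, being built from $x$'s and one $y$, contributes a term of the form $\sum \hb\, b_{ij}x_i\pa_j$-type which is first order in $\pa$, hence annihilates the constant $1$; and there may be a term purely in the $x_i$ (from any $\fu$-component valued outside $\y$), which is multiplication and again, being a genuine element of $\D^{\geq 1}\cdot(\text{stuff})$ of positive $x$-degree... — more carefully, I would check it contributes $0$ on $1_\mm$ because it lies in $\D\y$ modulo lower terms; the only surviving contribution is the one coming from the "diagonal" $A$-part.

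The heart of the computation is therefore the $A$-part. Up to the central term (the image of $\eps_\G$, i.e. a multiple of $\hb$), the Weyl-algebra lift of the diagonal element $a$ with matrix $A$ on $\langle x_i\rangle$ is the symmetrized expression $\tfrac12\sum_{i,j}A_{ij}(x_i y_j + y_j x_i)/\hb$, so $\hb\sigma(a) = \tfrac12\sum_{i,j}A_{ij}(x_i y_j + y_j x_i)$ up to a constant times $\hb$; one pins down the constant precisely by imposing \eqref{sisi}, i.e. by requiring $[\hb\sigma(a),v]=\hb\,a(v)$ for $v\in\v$, which fixes the ordering-ambiguity. Acting on $1_\mm$: each $y_j x_i$ term acts as $\hb\pa_j\circ(x_i\cdot)$, giving $\hb\delta_{ij}$ on the constant $1$, while each $x_i y_j$ term acts as $(x_i\cdot)\circ\hb\pa_j$, giving $0$ on $1$. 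Hence $(\hb\sigma(a))(1_\mm) = \tfrac12\sum_i A_{ii}\cdot 1_\mm + (\text{the central }\hb\text{-correction})$. Finally I would identify $\sum_i A_{ii} = \Tr(a|_{\langle x_i\rangle}) = -\Tr(a|_\y)$... — at which point the sign and the central correction must be reconciled: the central term of $\sigma$ is exactly calibrated (this is the content of the $\k_\G = \eps_\G(\k)$ normalization in \S\ref{setup}) so that the total comes out to $\tfrac12\Tr(a|_\y)\cdot 1_\mm$, with the correct sign. I expect \emph{this bookkeeping of the central ambiguity in $\sigma$ together with the sign coming from $\fp/\fu\cong\gl(\y)$} to be the only genuinely delicate point; everything else is a direct check using \eqref{commutation} and the formula for the $y_j$-action on $\mm$. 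An alternative, perhaps cleaner, route is to verify the formula for $a$ in the Cartan (diagonal $A$, so $\hb\sigma(a)=\sum a_i x_i\pa_i$ up to the center) and for $a\in\fu$ (where both sides are manifestly zero since $\fu$ acts by nilpotent operators preserving the augmentation ideal and $\Tr$ vanishes on $\fu$), and then extend by $P$-conjugation invariance of both sides, noting that $1_\mm$ spans a $P$-stable line on which $P$ acts through a character whose differential is $a\mapsto\tfrac12\Tr(a|_\y)$; I would present whichever of the two is shorter, likely the direct coordinate computation.
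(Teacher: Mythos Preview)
Your approach is the paper's own: express $a\in\fp$ in block form relative to $\v=\langle x_i\rangle\oplus\y$, write $\hb\sigma(a)$ explicitly as the corresponding symmetrized quadratic (the paper gives $\hb\sigma(a)=\tfrac12\sum_{ij}g_{ij}(x_iy_j+y_jx_i)+\tfrac12\sum_{ij}h_{ij}y_iy_j$), and evaluate on $1_\mm$ using $y_j(1_\mm)=0$.

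Your execution, however, has a real gap. You say the symmetrized expression gives $\hb\sigma(a)$ only ``up to a constant times $\hb$,'' that \eqref{sisi} fixes this constant, and that this central correction is what reconciles the sign you (correctly) flag. Each of these claims is wrong. The identity \eqref{sisi} is a commutator relation and is insensitive to any central term, so it cannot determine such a constant. More fundamentally, there is no constant to determine: by construction $\sigma$ takes values in $[\G^0,\G^0]$, the canonical complement of the center $\k_\G=\oh(\k\hb)$ inside $\G^0$, and the symmetrized quadratics $\oh\cdot\tfrac12(vw+wv)$, $v,w\in\v$, already span precisely this complement (this is the standard identification $S^2\v\cong\sp(\v)$). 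Hence the displayed formula for $\hb\sigma(a)$ is exact, with no added multiple of $\hb$, and you cannot invoke a ``central correction'' to flip a sign; the sign has to come honestly from keeping straight which block of your matrix is $a|_\y$. Your alternative route via $P$-equivariance does not bypass this either: the character by which $\k\cdot 1_\mm$ transforms under $\hb\sigma(\fp)$ is exactly what the lemma computes, so that argument is circular.
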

\begin{proof} Using the basis $x_1, \ldots, x_n,y_1,\ldots,y_n$,
the Lie algebra $\fp$ may be described as a subalgebra of $\sp(\v)$
formed by the matrices
$$
a=\left(\begin{array}{cc}
g& h\\
0&-g^T
\end{array} \right),\qquad g=\|g_{ij}\|,\ h=\|h_{ij}\|\en\text{where }
h_{ij}=h_{ji}, \
\forall i,j=1,\ldots,n.
$$ 
For such a matrix $a$, one finds:
\[\hb\cdot\sigma(a)=
\half\sum_{ij}g_{ij}(x_iy_j+y_jx_i)+\half\sum_{ij} h_{ij} y_iy_j.
\]

The formula of the lemma follows from this by a straightforward
computation using that $y_i(1_\mm)=0$.
\end{proof}

\begin{lemma}\label{wellknown}
Let $M$ be a complete topological finitely generated left
$\D$-module without $\hb$-torsion.
Then, any isomorphism $M/\hb M\cong\A/\A\y$,
of $\A$-modules, can be lifted to an isomorphism
 $M\cong \mm$ of $\D$-modules.
\end{lemma}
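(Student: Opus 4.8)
The plan is to reduce the statement to commutative algebra over the subring $R:=\k[[x_1,\dots,x_n,\hb]]\sset\D$ (see \S\ref{setup}) and then to write down the isomorphism explicitly, the only genuine input from the algebra structure being the commutation relation \eqref{commutation}.

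First I would show that $M$ is free of rank one over $R$. Under the given isomorphism $M/\hb M\cong\A/\A\y$, restriction of scalars to the subring $R/\hb R=\k[[x_1,\dots,x_n]]\sset\A=\D/\hb\D$ identifies $M/\hb M$ with $\A/\A\y\cong\k[[x_1,\dots,x_n]]$, a free $R/\hb R$-module of rank one; fix a basis vector $\bar m_0$ and a lift $m_0\in M$. Then $Rm_0+\hb M=M$, and since $M$ is complete and separated and $R$ is $\hb$-adically complete, a routine successive-approximation argument gives $M=Rm_0$. The absence of $\hb$-torsion in $M$, together with $\hb$-adic separatedness of $R$, then forces $\ann_R(m_0)=0$, so $m_0$ is a free $R$-basis of $M$.

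Next I would record how $\y$ acts on $m_0$. The image of $y_j$ in $\A$ acts by zero on $\A/\A\y$, so $y_jm_0\in\hb M=\hb Rm_0$; write $y_jm_0=\hb g_jm_0$ with $g_j\in R$ uniquely determined. Extending $\pa_j=\pa/\pa x_j$ to $R$ by $\hb$-linearity, relation \eqref{commutation} together with $[y_j,\hb]=0$ gives $y_jf-fy_j=\hb\,\pa_jf$ for all $f\in R$. Expanding the identity $y_iy_jm_0=y_jy_im_0$ (valid since $[y_i,y_j]=0$) with this rule yields $\hb^2(\pa_ig_j-\pa_jg_i)m_0=0$, whence the flatness relation $\pa_ig_j=\pa_jg_i$ (as $M$ is $\hb$-torsion-free and $m_0$ is a free basis). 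By the formal Poincaré lemma over the base ring $\k[[\hb]]$, the closed $1$-form $\sum_jg_j\,dx_j$ is exact: $g_j=\pa_jh$ for some $h\in R$ which we may normalize to vanish at $x=0$, so that $u:=\exp(-h)=\sum_{k\ge0}(-h)^k/k!$ is a well-defined unit of $R$ (here we use characteristic zero and completeness) with $\pa_ju=-g_ju$. Put $m_0':=um_0$, still a free $R$-basis of $M$. Then, using $y_ju-uy_j=\hb\,\pa_ju$, one computes $y_jm_0'=(y_ju)m_0=u(y_jm_0)+\hb(\pa_ju)m_0=\hb ug_jm_0-\hb g_jum_0=0$ for every $j$; hence $\y m_0'=0$ and so $\D\y\cdot m_0'=0$.

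Finally, the $\D$-linear map $\D\to M,\ d\mapsto dm_0'$, descends to a $\D$-linear map $\vphi:\mm=\D/\D\y\to M$ with $\vphi(1_\mm)=m_0'$; restricting scalars to $R$, it carries the $R$-basis $1_\mm$ of $\mm$ to the $R$-basis $m_0'$ of $M$, so $\vphi$ is an isomorphism of $R$-modules, a fortiori of $\D$-modules. The step requiring the most care is the first one — pinning down the topology meant by ``complete topological'' and verifying that the $\hb$-adic Nakayama/successive-approximation argument applies, so that $M$ really is free of rank one over $R$. The conceptual heart of the argument is the flatness computation for $(g_j)$ and the resulting integrating factor $\exp(-h)$, which is a standard move in formal geometry and should cause no difficulty.
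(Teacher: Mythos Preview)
Your proof is correct and follows essentially the same route as the paper's: pick a lift $m_0$ of the generator, use completeness and the absence of $\hb$-torsion to see that $M$ is free of rank one over $R=\k[[x_1,\dots,x_n,\hb]]$, extract the coefficients $g_j$ from $y_jm_0=\hb g_jm_0$, derive the closedness condition $\pa_ig_j=\pa_jg_i$ from $[y_i,y_j]=0$ via \eqref{commutation}, integrate by the formal Poincar\'e lemma, and correct $m_0$ by the unit $e^{-h}$. The paper phrases the first step as ``Nakayama's lemma'' where you say ``successive approximation,'' but these amount to the same thing in the complete setting; your caveat about pinning down the topology is exactly the point the paper glosses over.
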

\begin{proof} 
We will use the identification $\mm\cong\k[[x_1, \ldots, x_n, \hb]]$,
resp. $\A/\A\y\cong\k[[x_1, \ldots, x_n]]$.
Let $1_M\in M$ be any element that maps to
$1\in  \k[[x_1, \ldots, x_n]]$ under the
composition $M\onto M/\hb M\iso  \k[[x_1, \ldots, x_n]]$.
 Then, the map $p: \k[[x_1, \ldots, x_n,
\hb]]\to M,\  u\mto u(1_M)$ 
is a $\k[[x_1, \ldots, x_n,\hb]]$-linear map that induces
a bijection $\k[[x_1, \ldots, x_n]]\iso M/\hb M$.
It follows by Nakayama's lemma that the map $p$
is  an isomorphism of $\k[[x_1, \ldots, x_n,\hb]]$-modules.
Therefore, for each $j=1,\ldots,n$, there exists
a unique element $f_j\in \k[[x_1, \ldots, x_n,\hb]]$
such that $y_j(1_M)=f_j(1_M)$. Furthermore,
the power series $f_j$ is divisible by $\hb$,  since
$y_j(1_M)\mmod \hb M=0$.
Thus,
we have $f_j=\hb\cdot g_j$ for some $g_j\in\k[[x_1, \ldots, x_n,\hb]]$.

For any $1\leq i,j\leq n$, using
the commutation relation
in \eqref{commutation},  we find
$$ y_iy_j(1_M)=y_i(f_j(1_M))=
\hb\pa_i f_j(1_M)+f_j(y_i(1_M))=
\hb^2\bigl(\pa_i g_j(1_M)+g_j\cdot g_i(1_M)\bigr).
$$
Since $y_iy_j=y_jy_i$, we deduce that
$\pa_ig_j=\pa_j g_i$ for all $i,j$.
Hence, there exists a formal power series
$g\in \k[[x_1, \ldots, x_n,\hb]]$ such that we have
$g_j=\pa_j g$ for all $j$.
Furthermore,
we may (and will) choose $g$ to be contained in
the ideal generated by the elements
$x_1,\ldots,x_n$. Thus, the element $e^{-g}$ is a well-defined
element of $\k[[x_1, \ldots, x_n,\hb]]$.

We put $m:=e^{-g}(1_M)\in M$.
We compute
\begin{align*}
y_i(m)&=y_i(e^{-g}(1_M))=\hb\pa_i(e^{-g})(1_M)+e^{-g}(y_i(1_M))\\
&=-\hb \pa_i g\cdot e^{-g}(1_M)+ e^{-g}\cdot \hb g_i(1_M)\\
&=-\hb g_i\cdot e^{-g}(1_M)+ e^{-g}\cdot \hb g_i(1_M)=0.
\end{align*}

Hence the map $\D\to M,\ u \mto u(m)$ 
descends to a map $F: \D/\D \y\to M$.
The latter map is an isomorphism of rank 1 free $\k[[x_1, \ldots, x_n,\hb]]$-modules,
since $e^{-g}\in \k[[x_1, \ldots, x_n,\hb]]$ is an invertible element.
We conclude that $F$ is  an isomorphism of $\D$-modules
that lifts the  isomorphism $\A/\A\y\cong M/\hb M$.
\end{proof}

\section{Comparison of   Harish-Chandra pairs}
\subsection{} 

We introduce various  Harish-Chandra pairs
canonically associated to the Lagrangian subspace $\y\sset\v$.
We will use the notation $K=\k[[\hb]]$.

Let $\J\sset\D$ be the preimage of
the ideal $\A\y\sset\A$ under the natural algebra
projection $\D\onto\D/\hb\D=\A$. Thus $\J$
is a two-sided ideal of $\D$.
Let $\Aut(\D)_\J$, resp. $\Der(\D)_\J$,
be the subset of  $\Aut(\D)$, resp. $\Der(\D)$,
formed by the maps $f:\D\to\D$ such that $f(\J)\sset\J$.
The pair $\la\Aut(\D)_\J,\ \Der(\D)_\J\ra$ is a
 Harish-Chandra subpair of 
$\la\Aut(\D),\ \Der(\D)\ra$.

Let $\la\GG_\J,\ \G_\J\ra$
be the preimage of the pair $\la\Aut(\D)_\J,\ \Der(\D)_\J\ra$
under the morphism $\la\Phi_\D,\varphi_\D\ra$ in \eqref{aut}.
Thus, by construction, one has a central extension
\beq{J-ext}
1\to\la K^\times, K\ra\
 \too
\la\GG_\J,\G_\J\ra\ \stackrel{\la\Phi_\D,\varphi_\D\ra}\too\
\la\Aut(\D)_\J, \Der(\D)_\J\ra\to1
\eeq

Below, we will identify the space
$\oh\J\sset\oh\D$ with its image in $\G$ under
the composite map $\oh\J\into\oh\D\onto\oh\D/\oh D^0=\G$,
which  is
injective since $\J\cap D^0=0$. 

\begin{lemma}\label{jlem} We have $\G_\J=\oh\J$.
\end{lemma}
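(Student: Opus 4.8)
The plan is to prove the two inclusions separately, using the description of $\varphi_\D: \G \to \Der(\D)$ as the adjoint action $a \mapsto [a, -]$ on $\D$ (with kernel the central subalgebra $K = \k[[\hb]]$), together with the Levi decomposition \eqref{levi} of $\G^{\geq 0}$. First I would record the elementary observation that $\oh\J$ really does sit inside $\G = \oh\D/\oh\k$: this is the injectivity of $\oh\J \into \oh\D \onto \G$ noted just before the statement, which holds because $\J \cap D^0 = 0$ (the ideal $\J$ is spanned by $\hb$ and the $y_j$'s times everything, so it meets the degree-zero part $D^0 = \k$ trivially). The content of the lemma is then the identification of $\G_\J$, the preimage under $\la\Phi_\D,\varphi_\D\ra$ of those derivations/automorphisms preserving $\J$, with this subspace.

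For the inclusion $\oh\J \subseteq \G_\J$: since $\J$ is a two-sided ideal of $\D$, for any $a \in \oh\D$ the derivation $\varphi_\D(a) = [a,-]$ satisfies $[a,\J] = a\J - \J a \subseteq \oh^{-1}\J\D + \oh^{-1}\D\J$; but one must check this lands in $\J$ and not just in $\oh^{-1}\J$. The key point is that for $a \in \oh\J$, i.e. $a = \oh j$ with $j \in \J$, and $x \in \D$, we have $[a,x] = \oh(jx - xj) = \oh[j,x]$, and since $\J$ is a two-sided ideal while $[j,x] = jx - xj \in \J$ lies in $\J \cap \hb\D$ — indeed $j \in \J$ reduces mod $\hb$ to $\A\y$, and the commutator $[j,x]$ reduces mod $\hb$ to the Poisson bracket $\hb\{\bar j, \bar x\}$... more carefully: $[j,x] \in \hb\D$ always (as $\A = \D/\hb\D$ is commutative), so $\oh[j,x] \in \D$, and moreover it lies in $\J$ because $\J$ is an ideal so $[j,x] \in \J$, hence $\oh[j,x] \in \oh\J \cap \D$. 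Wait — I need $\oh[j,x] \in \J$, i.e. its reduction mod $\hb$ lies in $\A\y$. I would argue: $j\in\J$ means $\bar\jmath \in \A\y$; write $[j,x] = \hb\cdot\oh[j,x]$ and reduce: $\oh[j,x] \bmod \hb = \{\bar\jmath, \bar x\} = \{\text{element of }\A\y,\ \bar x\}$, and since $\A\y$ is a Poisson (even Lie) ideal of $\A$ — this is exactly the statement that $\y$ is Lagrangian, so $\A\y$ is stable under $\{\bar x, -\}$ for the relevant $\bar x$... hmm, actually $\A\y$ is the ideal of a Lagrangian, hence coisotropic, subvariety, so $\{\A\y, \A\y\} \subseteq \A\y$ but not $\{\A, \A\y\}\subseteq\A\y$ in general. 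So the cleaner route: simply use that $\J$ is a two-sided ideal of $\D$, so $\oh\J \cdot \D \subseteq \oh\J$ and $\D \cdot \oh\J \subseteq \oh\J$ as subsets of $\k((\hb))\o\D$; thus $[\oh j, x] = (\oh j)x - x(\oh j) \in \oh\J$, and since also $[\oh j, x] = \oh[j,x] \in \D$ (as $[j,x]\in\hb\D$), we get $[\oh j, x] \in \oh\J \cap \D = \J$. Hence $\varphi_\D(\oh j)(\J) \subseteq \J$, and additionally $\varphi_\D(\oh j)$ preserves $\D$ itself, so $\oh j \in \G_\J$; exponentiating the pronilpotent part and using the $Sp(\v)\cap P$ / $\fp$-action gives the group-level statement, so $\oh\J \subseteq \G_\J$ as Harish-Chandra pairs.

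For the reverse inclusion $\G_\J \subseteq \oh\J$, I would take $a \in \G_\J$, lift it to $\tilde a \in \oh\D$, and analyze the condition $[\tilde a, \J] \subseteq \J + \oh\k$ — equivalently $[\tilde a, \J] \subseteq \J$ after adjusting by the central $\oh\k$, which is harmless since $[\oh\k,-]=0$. Decompose $\tilde a$ according to the grading. The element $\oh j$ for $j \in \J$ already accounts for the "large" negative/low-degree pieces; the genuinely new possibilities for $\tilde a \bmod \oh\J$ live in the complement, controlled by $\G^{\geq 0} = \k_\G \oplus (\sp(\v)\ltimes \G^{\geq 1})$. So it suffices to show: if $a \in \G^{\geq 0}$ and $\varphi_\D(a)(\J)\subseteq\J$, then $a \in \oh\J + \oh\k$, i.e. the image of $a$ in $\G^{\geq 0}/(\oh\J\cap\G^{\geq 0})$ is zero — but note $\oh\J \cap \G^{\geq 0}$ is small. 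The real claim is that $\G^{\geq 0}\cap\G_\J$ is exactly $\k_\G \oplus (\fp \ltimes (\oh\J\cap\G^{\geq 1}))$ where $\fp = \Lie P$, matching $\oh\J\cap\G^{\geq 0}$ via Lemma \ref{1m}; I would check that an element of $\sp(\v)$ gives a derivation preserving $\J$ iff it lies in $\fp$ (using $\sigma$ and \eqref{sisi}: $[\sigma(a),-]$ preserves $\J$ iff $\sigma(a)\cdot\J\subseteq\J$, which by degree reasons and the explicit form of $\sigma$ forces $a(\y)\subseteq\y$), and that an element of $\G^{\geq 1}$ gives such a derivation iff it lies in $\oh\J$. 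The main obstacle is this last bookkeeping — pinning down precisely which elements of $\oh\D^{\geq 1}/\oh\k$, equivalently $\G^{\geq 1}$, have the property that $[\tilde a, y_j] \in \J$ for all $j$ and $[\tilde a, f]\in\J$ for $f\in\hb\D$; I expect that expanding $\tilde a$ in the $x_i, y_j$ basis, commuting with the $y_j$ via \eqref{commutation}, and demanding the result reduce mod $\hb$ into $\A\y$ forces all monomials in $\tilde a$ without enough $y$-factors to drop out — precisely cutting $\tilde a$ down to $\oh\J$ modulo the center. Assembling the graded pieces and re-exponentiating then yields $\G_\J = \oh\J$.
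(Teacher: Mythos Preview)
Your argument for the inclusion $\oh\J\subseteq\G_\J$ contains a genuine error: the identity $\oh\J\cap\D=\J$ is false. Since $\hb\D\subset\J$, every $d\in\D$ satisfies $\hb d\in\J$, hence $d=\oh(\hb d)\in\oh\J$; thus $\oh\J\cap\D=\D$. Consequently your computation, which is carried out for \emph{arbitrary} $x\in\D$, would yield $[\oh j,\D]\subset\J$, and that is simply not true (e.g.\ $[\oh y_1,x_1]=1\notin\J$). What you actually need is $[\oh j,\J]\subset\J$, and for that you must use that the test element $x$ itself lies in $\J$. The cleanest way to see it is to reduce mod~$\hb$: the map $[\oh j,-]$ induces $\{\bar\jmath,-\}$ on $\A$, so the condition becomes $\{\A\y,\A\y\}\subset\A\y$, which is exactly the Lagrangian (coisotropic) property.

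More importantly, your plan for the reverse inclusion is far more laborious than necessary and has gaps: the Levi decomposition of $\G^{\geq0}$ does not control all of $\G_\J$ (the degree~$-1$ piece $\G^{-1}=\v$ contributes $\v/\y$ to $\G/\oh\J$, which you have not addressed), and Lemma~\ref{1m} is about the action on $1_\mm$, not about which elements preserve $\J$. The paper avoids all of this by a single reduction to the Poisson level: for $a\in\G$ set $f:=(\hb a)\bmod\hb\D\in\A$, an element with zero constant term. Because $\J/\hb\D=\A\y$ and $[a,-]$ reduces to $\{f,-\}$, the condition $[a,\J]\subset\J$ is \emph{equivalent} to $\{f,\A\y\}\subset\A\y$; computing $\{f,y_j\}=\partial f/\partial x_j$ via~\eqref{bracket} shows the latter holds iff $f\in\k+\A\y$, hence (no constant term) iff $f\in\A\y$, i.e.\ $a\in\oh\J$. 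This handles both inclusions at once in a few lines.
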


\begin{proof} 
Let  $a\in\G$ and put  $f:=(\hb a)\mmod \hb\D$.
We view $f$ as an element of $\A$ without
constant term.  Then, since $\J/\hb\D=\A\y$,
an inclusion $[a,\J]\sset\J$ is equivalent
to  $\{f,\A\y\}\sset\A\y$.
Using formula \eqref{bracket}, one shows
that the latter inclusion holds if and only if
one has   $f=c+f'$ for some constant $c\in\k$ and
some $f'\in \A\y$. We must have $c=0$.
Hence,   $a\in\oh\J$.
\end{proof}

Let  $\Der(\D, \mm)$ be the Lie algebra of derivations, 
resp. $\Aut(\D, \mm)$ the group of automorphisms,
of the pair $(\D,\mm)$.
By definition, an element
of   $\Der(\D, \mm)$ is a pair  $(\dd ,\md )$
where $\dd 
\in\Der(\D)$ and
$\md : \mm \to \mm$
is a continuous $\k[[\hb]]$-linear map  such that
\beq{dm}
\md (um) = \dd (u) m + u(\md (m)),
\qquad\forall u\in\D, m\in\mm.
\eeq
Similarly,  an element
of   $\Aut(\D, \mm)$ is a pair
$(\ud f, \um f)$
where $\ud f\in\Aut(\D)$ and $\um f: M\to M$ is
a continuous $\k[[\hb]]$-linear bijection  such that
$
\um f(um) = \ud f(u)\, \um f(m).
$
Thus, $\la\Aut(\D, \mm),\ \Der(\D, \mm)\ra$
is  a Harish-Chandra pair.

The
assignment $c\mto (\id_\D, c\cdot\id_\mm)$ gives
a natural central imbedding
$\eps_{\Aut}: K^\times\into \Aut(\D,\mm)$.
Similarly, one has a central Lie algebra imbedding
$\eps_{\Der}: K\into \Der(\D,\mm)$ given by
$a\mto (0, a\cdot\id_\mm)$.
This gives an injective morphism
$\eps=\la\eps_{\Aut},\eps_{\Der}\ra:\
\la K^\times, K\ra\into \la\Aut(\D,\mm),\Der(\D,\mm)\ra$
of \hc pairs.

%Thus, we obtain an imbedding
%$\la\eps^\times,\eps\ra:\ \la K^\times, K\ra\into \la\Aut(\D, \mm),\ \Der(\D, \mm)\ra$,
%of Harish-Chandra pairs.
%Abusing the notation, we will identify the
%group $K^\times$, resp. Lie algebra $K$,
%with its image under $\eps_{\Aut}$, resp. $\eps_{\Der}$.
Further,
it is clear that forgetting the action on $\mm$
yields a morphism of Harish-Chandra pairs:
\[\forget=\la\forget_{\Aut},\forget_{\Der}\ra:\
\la\Aut(\D, \mm),\ \Der(\D, \mm)\ra
\to \la \Aut(\D),\Der(\D)\ra.
\]
\begin{lemma}
One has an inclusion
\beq{inc}
\forget\bigl(\la\Aut(\D, \mm),
\Der(\D, \mm)\ra\bigr)\ \sset \
\la\Aut(\D)_\J, \Der(\D)_\J\ra.
\eeq
\end{lemma}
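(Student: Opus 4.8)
The plan is to reduce everything modulo $\hb$ and to recognize the two‑sided ideal $\J$ as coming from the annihilator of a module. Write $N:=\mm/\hb\mm$. Using the isomorphism $\mm/\hb\mm\cong\A/\A\y$ of $\A$‑modules recorded above, together with the elementary fact that $\ann_R(R/I)=I$ for any commutative ring $R$ and ideal $I$, we get $\A\y=\ann_\A N$. Next, any $\ud f\in\Aut(\D)$ and any $\dd\in\Der(\D)$ are $K$‑linear and continuous, hence send $\hb\D$ into $\hb\D$ and descend to a ring automorphism $\bar f$, resp.\ a derivation $\bar\delta$, of the commutative ring $\A=\D/\hb\D$. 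Since $\hb\D\subset\J$ and $\J/\hb\D=\A\y$, one has the equivalences $\ud f(\J)\subset\J\Leftrightarrow\bar f(\A\y)\subset\A\y$ and $\dd(\J)\subset\J\Leftrightarrow\bar\delta(\A\y)\subset\A\y$. Therefore the inclusion \eqref{inc} is reduced to this: the automorphisms $\bar f$, resp.\ derivations $\bar\delta$, arising from elements of $\Aut(\D,\mm)$, resp.\ $\Der(\D,\mm)$, must stabilize the ideal $\A\y=\ann_\A N$.

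This last claim is the one‑line observation that the annihilator of a module is respected by semilinear symmetries, which I would spell out as follows. For $(\ud f,\um f)\in\Aut(\D,\mm)$, the map $\um f$ is $K$‑linear, hence reduces mod $\hb$ to a $\k$‑linear map $\theta\colon N\to N$ which is surjective (being the reduction of a surjection), and reducing the identity $\um f(um)=\ud f(u)\,\um f(m)$ modulo $\hb$ gives $\theta(a\cdot n)=\bar f(a)\cdot\theta(n)$ for all $a\in\A$, $n\in N$. Hence, for $v\in\ann_\A N$ and an arbitrary $n=\theta(n')\in N$, we have $\bar f(v)\cdot n=\theta(v\cdot n')=\theta(0)=0$, so $\bar f(v)\in\ann_\A N=\A\y$; applying the same to $\ud f\inv$ yields $\bar f(\A\y)=\A\y$. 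The derivation case is parallel: for $(\dd,\md)\in\Der(\D,\mm)$, the reduction $\eta\colon N\to N$ of $\md$ satisfies $\eta(a\cdot n)=\bar\delta(a)\cdot n+a\cdot\eta(n)$, so that for $v\in\ann_\A N$ and any $n\in N$ one gets $0=\eta(v\cdot n)=\bar\delta(v)\cdot n$, whence $\bar\delta(v)\in\ann_\A N=\A\y$ and thus $\bar\delta(\A\y)\subset\A\y$.

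The argument is essentially formal, and I do not anticipate a genuine obstacle; the only care needed is bookkeeping at the interface between $\D$ and $\A=\D/\hb\D$. Namely, one must check that $\ud f$ and $\dd$ really descend modulo $\hb$ — they do, since $K$‑linearity and $\hb$‑adic continuity force $\ud f(\hb\D)=\hb\D$ and $\dd(\hb\D)\subset\hb\D$ — and one uses the inclusion $\hb\D\subset\J$ both to read off $\J/\hb\D=\A\y$ and to make stability of $\J$ detectable after reduction mod $\hb$. With these in hand the proof is as sketched.
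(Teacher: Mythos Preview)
Your proof is correct and follows essentially the same idea as the paper's: both recognize $\J$ as the annihilator of the module $\mm/\hb\mm$ and then use the compatibility relations \eqref{dm} to conclude that $\dd$ (resp.\ $\ud f$) preserves this annihilator. The only cosmetic difference is that the paper works directly with $\J=\ann_\D(\mm/\hb\mm)$ as an annihilator in $\D$, whereas you first reduce modulo $\hb$ and work with $\A\y=\ann_\A N$; your equivalences $\ud f(\J)\subset\J\Leftrightarrow\bar f(\A\y)\subset\A\y$ and the analogue for $\dd$ bridge the two viewpoints and amount to the same computation.
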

\begin{proof}
Observe  that  the ideal $\J$ is equal
to $\ann(\mm/\hb\mm)$, the  annihilator of the $\D$-module
$\mm/\hb\mm$, since $\mm/\hb\mm=\A/\A\y$. 
Further, it follows  from
equation \eqref{dm} that, for any pair $(\dd , \md )
\in \Der(\D, \mm)$, 
the map $\md $
takes the  annihilator of $\mm/\hb\mm$ to itself,
that is, takes $\J$ to $\J$. We deduce that the image of
the map $\forget_{\Der}: \Der(\D, \mm) \to \Der(\D)$
 is contained in 
$\Der(\D)_{\ca J}$. A similar argument
yields the  inclusion involving the map
$\forget_{\Aut}$, proving \eqref{inc}.
\end{proof}

Next we note that since $\J=\ann(\mm/\hb\mm)$,
one has $\J\mm\sset \hb\mm$. Hence, there is a well-defined 
action $\oh\J\times\mm\to \mm,\ \oh u\times m\mto \oh um$,
of the Lie algebra $\oh\J$ on $\mm$.
For $a\in \oh\J$, let $\varphi_\mm(a): \mm\to\mm$
denote the map $m\mto am$. 
It is immediate to check
that equation \eqref{dm} holds for
 the pair of maps $\la\varphi_\D(a),\varphi_\mm(a)\ra$,
i.e., this pair
gives an element of  $\Der(\D, \mm)$.
We deduce that the assignment $a\mto \la\varphi_\D(a),\varphi_\mm(a)\ra$
yields a  map 
$\varphi_{\D,\mm}: \oh\J\to \Der(\D, \mm)$. 
 Note that $\varphi_{\D,\mm}|_K=\eps_{\Der}$.

\begin{lemma}\label{delta_DM}
The map $\varphi_{\D,\mm}$ is a Lie algebra isomorphism.
\end{lemma}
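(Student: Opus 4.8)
The plan is to show that $\varphi_{\D,\mm}: \oh\J \to \Der(\D,\mm)$ is bijective by analyzing both injectivity and surjectivity, using the fact (Lemma \ref{jlem}) that $\oh\J = \G_\J$ together with the central extension \eqref{J-ext}.

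First I would handle injectivity. Suppose $a \in \oh\J$ satisfies $\varphi_{\D,\mm}(a) = 0$, i.e. both $\varphi_\D(a) = 0$ as a derivation of $\D$ and $\varphi_\mm(a) = 0$ as an operator on $\mm$. Since the kernel of $\varphi_\D$ is $\oh\k[[\hb]]/\oh\k \cong K$ (sitting inside $\G$), we get $a \in K$, so $a = \oh(c\hb)$ for some $c \in K$ acting on $\mm$ by multiplication by $c$. The condition $\varphi_\mm(a) = 0$ then forces $c = 0$, hence $a = 0$. So $\varphi_{\D,\mm}$ is injective, and moreover this shows $\varphi_{\D,\mm}|_K = \eps_{\Der}$ is exactly the kernel-complement bookkeeping.

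Next, surjectivity. Given $(\dd, \md) \in \Der(\D,\mm)$, the previous lemma tells us $\dd = \forget_{\Der}(\dd,\md) \in \Der(\D)_\J$. By the central extension \eqref{J-ext} and Lemma \ref{jlem}, there exists $a \in \oh\J = \G_\J$ with $\varphi_\D(a) = \dd$, and $a$ is unique up to adding an element of $K$. I then compare $\md$ with $\varphi_\mm(a)$: the difference $\md - \varphi_\mm(a)$ is a $\k[[\hb]]$-linear map $\mm \to \mm$ satisfying $(\md - \varphi_\mm(a))(um) = u\,(\md - \varphi_\mm(a))(m)$ for all $u \in \D$ (since the derivation parts cancel), i.e. it is an endomorphism of $\mm$ as a $\D$-module. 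Because $\mm = \D/\D\y$ is generated by $1_\mm$, any such endomorphism is multiplication by the scalar $\lambda \in K$ determined by its value on $1_\mm$ (one must check $(\md - \varphi_\mm(a))(1_\mm)$ indeed lies in $K \cdot 1_\mm$, which follows since this element is annihilated by $\y$ and $\mm \cong \k[[x_1,\ldots,x_n,\hb]]$ as a module over $\k[[y_1,\ldots,y_n,\hb]]$ acting via $y_j \mapsto \hb\pa_j$, whose joint kernel is $K\cdot 1_\mm$). Replacing $a$ by $a + \oh(\lambda\hb) \in \oh\J$, which changes $\varphi_\D$ nothing and adds $\lambda$ to $\varphi_\mm$, we get $\varphi_{\D,\mm}(a + \oh(\lambda\hb)) = (\dd, \md)$, proving surjectivity.

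Finally I would remark that $\varphi_{\D,\mm}$ is evidently a Lie algebra homomorphism by the same direct check already used to see that $\la\varphi_\D(a), \varphi_\mm(a)\ra \in \Der(\D,\mm)$, so the established bijection is an isomorphism of Lie algebras. The main obstacle is the surjectivity step — specifically, verifying that an arbitrary $\D$-module endomorphism of $\mm$ is scalar, which rests on the concrete description of $\mm$ as $\k[[x_1,\ldots,x_n,\hb]]$ with $\y$ acting by $\hb$ times the partial derivatives, so that the only $\D$-linear (equivalently $\y$-annihilated, $\k[[x,\hb]]$-linear) endomorphisms come from $K$. Everything else is formal consequence of Lemmas \ref{jlem} and \ref{wellknown} and the central extension \eqref{J-ext}.
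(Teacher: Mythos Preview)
Your proof is correct and follows essentially the same approach as the paper: injectivity via $\Ker\varphi_\D = K$ and the fact that nonzero elements of $K$ act nontrivially on $\mm$, then surjectivity by lifting the $\D$-derivation part through the central extension \eqref{J-ext} and correcting by a scalar in $K$. The paper packages these steps into a commutative square \eqref{mod-ext} and asserts that $\D$-endomorphisms of $\mm$ are scalar, whereas you supply the explicit reason (the joint kernel of the $\hb\pa_j$ on $\k[[x_1,\ldots,x_n,\hb]]$ is $K\cdot 1_\mm$); but the logical skeleton is identical. One minor remark: Lemma~\ref{wellknown} is not actually used in your argument, so you can drop that reference.
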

\begin{proof}
Let $\varphi_\J$ be the restriction of the map
$\varphi_\D$ to the subalgebra $\G_\J\sset \G$.
Using  Lemma \ref{jlem}
 we obtain the following diagram

\beq{mod-ext}
\xymatrix{
K\ar@{^{(}->}[r]&\oh\J\ar[d]_<>(0.5){\varphi_{\D,\mm}}
\ar@{=}[rr]^<>(0.5){\text{Lemma  \ref{jlem}}}&& \G_\J
\ar@{->>}[d]_<>(0.5){\varphi_\J}\\
&\Der(\D, \mm)\ar[rr]^<>(0.5){\forget_{\Der}}&&
\Der(\D)_{\ca J}
}
\eeq

It is immediate from definitions that
the square in the diagram commutes.
The map 
$\varphi_\J$ being surjective,  
it follows that  the map $\forget_{\Der}$ is surjective.
The kernel of this map 
is formed by the pairs $(0, \md )$
where the map  $\md :\mm\to\mm$  commutes with the
$\D$-action, i.e. is a $\D$-module endomorphism. All
$\D$-module endomorphisms of $\mm$ are 
given by multiplication 
by an element of $K$. 
We deduce that $\Ker\forget_{\Der}=\varphi_{\D,\mm}(K)$.
Also, since $\Ker\varphi_\J=K$ 
we  get $\Ker\varphi_{\D,\mm}\sset K$. 
Furthermore, it is clear that multiplication by a nonzero
element of $K$ gives a nonzero endomorphism of $\mm$.
We conclude that the map $\varphi_{\D,\mm}$ is injective.

Now, let $\delta=(\dd,\md)$ be an element of $\Der(\D, \mm)$.
The map $\varphi_\J$ being surjective, there exists $a\in\oh\J$
such that
we have $\dd=\varphi_\J(a)$.
We know that $\varphi_{\D,\mm}(a)\in\Der(\D, \mm)$
and it is clear that $\forget_{\Der}(\delta-\varphi_{\D,\mm}(a))=0$.
It follows that $\delta-\varphi_{\D,\mm}(a)=\varphi_{\D,\mm}(c)$ for
some $c\in K$
and, hence, we have $\delta=\varphi_{\D,\mm}(a+c)$. We deduce that the
injective map $\varphi_{\D,\mm}$ is also surjective, 
proving the lemma.
\end{proof}

As a consequence of the above proof we obtain

\begin{corollary}\label{DM-exact} The following sequence is exact
\[1\to \la K^\times,K\ra
\stackrel{\eps}\too
\la\Aut(\D, \mm),\Der(\D,\mm)\ra\stackrel{\forget}\too
\la\Aut(\D)_\J, \Der(\D)_\J\ra\to1.
\]
\end{corollary}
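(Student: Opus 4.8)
The plan is to deduce Corollary \ref{DM-exact} essentially for free from the proof of Lemma \ref{delta_DM}, since all the real work has already been done on the level of Lie algebras. The statement asserts exactness of a sequence of \hc pairs, which unwinds into two assertions: exactness of the corresponding sequence of groups and exactness of the corresponding sequence of Lie algebras, the latter already being established. So the bulk of the argument concerns the group-level sequence
\[
1\to K^\times\stackrel{\eps_{\Aut}}\too \Aut(\D,\mm)\stackrel{\forget_{\Aut}}\too \Aut(\D)_\J\to 1.
\]

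First I would record injectivity of $\eps_{\Aut}$ and the inclusion $\im\eps_{\Aut}\subset\Ker\forget_{\Aut}$, both of which are immediate from the definition $c\mto(\id_\D,c\cdot\id_\mm)$: a nonzero scalar gives a nonzero automorphism of $\mm$, and forgetting the $\mm$-component of $(\id_\D,c\cdot\id_\mm)$ plainly yields $\id_\D$. Conversely, if $(\ud f,\um f)\in\Ker\forget_{\Aut}$ then $\ud f=\id_\D$, so $\um f$ is a $\D$-module automorphism of $\mm$; since $\mm\cong\k[[x_1,\dots,x_n,\hb]]$ is free of rank one over $K=\k[[\hb]]$ through its cyclic vector $1_\mm$ and any $\D$-endomorphism is multiplication by an element of $K$ (as used in the proof of Lemma \ref{delta_DM}), such $\um f$ is multiplication by an invertible element of $K$, i.e. lies in $\im\eps_{\Aut}$. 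This gives exactness in the middle and on the left.

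The one point that requires genuine work is surjectivity of $\forget_{\Aut}:\Aut(\D,\mm)\to\Aut(\D)_\J$: given $\ud f\in\Aut(\D)$ with $\ud f(\J)\subset\J$, one must produce a compatible bijection $\um f:\mm\to\mm$. I would argue as follows: the automorphism $\ud f$ induces, by transport of structure, a new left $\D$-module structure on the underlying space of $\mm$, call it $\mm^{\ud f}$, where $u$ acts as $\ud f^{-1}(u)$ acts on $\mm$. Since $\ud f(\J)\subset\J$ and $\ud f$ is an automorphism we in fact have $\ud f(\J)=\J$, hence $\J=\ann(\mm^{\ud f}/\hb\mm^{\ud f})$ as well, so $\mm^{\ud f}/\hb\mm^{\ud f}$ is an $\A$-module annihilated by $\A\y$ and cyclic, forcing $\mm^{\ud f}/\hb\mm^{\ud f}\cong\A/\A\y$ as $\A$-modules (after possibly rescaling the cyclic generator, which is harmless). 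Now $\mm^{\ud f}$ is a complete topological finitely generated $\D$-module without $\hb$-torsion, so Lemma \ref{wellknown} applies and yields a $\D$-module isomorphism $\um f:\mm\iso\mm^{\ud f}$; by the definition of the transported structure this $\um f$ is exactly a $\k[[\hb]]$-linear bijection of the underlying space with $\um f(um)=\ud f(u)\,\um f(m)$, i.e. $(\ud f,\um f)\in\Aut(\D,\mm)$ maps to $\ud f$. Thus $\forget_{\Aut}$ is surjective.

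Finally I would assemble these observations together with the Lie-algebra exactness already proved inside Lemma \ref{delta_DM} (where $\forget_{\Der}$ was shown surjective with kernel $\varphi_{\D,\mm}(K)=\eps_{\Der}(K)$) to conclude that both the group and Lie algebra rows of the claimed sequence of \hc pairs are exact, which is precisely the assertion of the corollary. I expect the only real obstacle to be the surjectivity argument above, specifically checking that Lemma \ref{wellknown} is genuinely applicable to $\mm^{\ud f}$ — that is, verifying the hypotheses (completeness, finite generation, $\hb$-torsion-freeness, and $\mm^{\ud f}/\hb\mm^{\ud f}\cong\A/\A\y$); all of these are inherited from $\mm$ via the automorphism $\ud f$, but this inheritance is exactly the step that needs to be spelled out with care.
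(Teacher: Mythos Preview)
Your argument is correct in substance and matches what the paper leaves implicit. The paper gives no separate proof of the corollary, deriving the Lie-algebra row directly from the proof of Lemma \ref{delta_DM} and tacitly expecting the group row to be handled ``similarly'' (cf.\ the opening line of the sketch of proof of Claim \ref{2semidirect}). Your use of Lemma \ref{wellknown} to establish surjectivity of $\forget_{\Aut}$ is exactly the natural way to fill this in.

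One small bookkeeping slip: with your convention that $u$ acts on $\mm^{\ud f}$ as $\ud f^{-1}(u)$ acts on $\mm$, a $\D$-module isomorphism $\um f:\mm\iso\mm^{\ud f}$ satisfies $\um f(um)=\ud f^{-1}(u)\,\um f(m)$, not $\ud f(u)\,\um f(m)$. The fix is trivial: either twist by $\ud f$ instead of $\ud f^{-1}$, or note that the pair you actually produce is $(\ud f^{-1},\um f)\in\Aut(\D,\mm)$, which is enough for surjectivity since $\ud f$ was arbitrary in $\Aut(\D)_\J$. Everything else (completeness, finite generation, $\hb$-torsion-freeness of the twisted module, and the identification of its reduction mod $\hb$ with $\A/\A\y$ as a cyclic $\A$-module with annihilator $\A\y$) is verified correctly.
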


\subsection{} 
\label{sec3.2}
Below, we identify the
subgroup $\{\pm1\}\sset \k^\times$ with its
images in $\eps_{\Aut}(\k^\times)$ and $\k^\times_\GG$,
respectively.

The main result of this section is the
following

\begin{proposition}\label{Lie-extensions}
There is  an isomorphism 
\[
\Phi_{\D,\mm}:\ \GG_{\ca J}/\{\pm1\}\iso\Aut(\D, \mm)/\{\pm1\},
\]
of proalgebraic groups,
that fits into a commutative diagram 
$$
\xymatrix{
\big\la \mbox{$\frac{K^\times}{\{\pm1\}}$}, K\big\ra\
 \ar@{^{(}->}[r]\ar@{=}[d]^<>(0.5){\id}
&\big\la\mbox{$\frac{\GG_\J}{\{\pm1\}}$},\G_\J\ra\ar@{->>}[r]^<>(0.5){\la\Phi_\D,\varphi_\D\ra}\ar[d]^<>(0.5){\la\Phi_{\D,\mm},\varphi_{\D,\mm}\ra}_<>(0.5){\cong}
&
\big\la\Aut(\D)_\J, \Der(\D)_\J\big\ra\ar@{=}[d]^<>(0.5){\id}\\
\big\la \mbox{$\frac{K^\times}{\{\pm1\}}$}, K\big\ra\
\ar@{^{(}->}[r]^<>(0.5){\eps}& 
\big\la\mbox{$\frac{\Aut(\D, \mm)}{\{\pm1\}}$},
\Der(\D, \mm)\big\ra\ar@{->>}[r]^<>(0.5){\forget}& 
\big\la\Aut(\D)_\J, \Der(\D)_\J\big\ra 
}
$$
of central extensions of Harish-Chandra pairs.
\end{proposition}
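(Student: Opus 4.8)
The strategy is to build the isomorphism $\Phi_{\D,\mm}$ on the Lie-algebra level first, where we already have everything, and then integrate. On the infinitesimal side, the right-hand column of the diagram is the central extension \eqref{J-ext} restricted to $\G_\J$, while Lemma~\ref{delta_DM} identifies $\oh\J$ with $\Der(\D,\mm)$ via $\varphi_{\D,\mm}$, and Lemma~\ref{jlem} identifies $\oh\J$ with $\G_\J$. Chasing $\varphi_{\D,\mm}|_K=\eps_{\Der}$ and the commuting square \eqref{mod-ext} shows that under these identifications the two central extensions of Lie algebras
$$
0\to K\to\G_\J\to\Der(\D)_\J\to 0,\qquad
0\to K\to\Der(\D,\mm)\to\Der(\D)_\J\to 0
$$
coincide; this gives the map $\varphi_{\D,\mm}$ in the diagram and the commutativity of its lower infinitesimal triangle, together with $\Ker\forget_{\Der}=\eps_{\Der}(K)$ (Corollary~\ref{DM-exact}). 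So the only real content is the group-level statement.

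First I would construct a group homomorphism $\GG_\J\to\Aut(\D,\mm)/\{\pm1\}$. Recall $\GG=\k^\times_\GG\times(Sp(\v)\ltimes\GG^{\geq1})$, and $\GG_\J$ is the preimage of $\Aut(\D)_\J$, so by Lemma~\ref{jlem} and the Levi decomposition \eqref{levi}, $\GG_\J=\k^\times_\GG\times(P\ltimes\GG^{\geq1}_\J)$ where $P=\{g\in Sp(\v):g\y\sset\y\}$ and $\GG^{\geq1}_\J$ is the prounipotent group of $\G^{\geq1}_\J=\G^{\geq1}\cap\oh\J$. On the prounipotent part I exponentiate the Lie algebra map $\G^{\geq1}_\J\to\Der(\D,\mm)$ coming from $\varphi_{\D,\mm}$ (this lands in $\Der^{\geq1}$, a pronilpotent ideal, so exponentiation is unproblematic and needs no $\{\pm1\}$-quotient). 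The central $\k^\times_\GG$ maps to $\eps_{\Aut}(K^\times)\subset\Aut(\D,\mm)$ via $c\mto\eps_{\Aut}(e^c)$ or, more naturally, via the identification of $\k^\times_\GG$ with scalars. The genuinely delicate factor is $P\subset Sp(\v)$: its action on $\D$ preserves $\J$, hence preserves $\mm/\hb\mm=\A/\A\y$ up to the natural $\A$-linear structure, and I must lift this to an action on $\mm$ itself. This is precisely where the $\{\pm1\}$-ambiguity and the half-trace of Lemma~\ref{1m} enter: $\mm$ has a distinguished generator $1_\mm$, and the element $\hb\sigma(a)$ acts on it by $\tfrac12\Tr(a|_\y)$, so the would-be lift $\exp(\hb\sigma(a))$ of $g=\exp(a)\in P$ differs from an honest $\mm$-automorphism by a square-root of $\det(g|_\y)$ — well-defined only modulo $\{\pm1\}$. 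Thus $\Phi_{\D,\mm}$ is defined on $\GG_\J/\{\pm1\}$, not on $\GG_\J$; concretely, one uses Lemma~\ref{wellknown} to produce, for each $g\in P$, an $\mm$-automorphism covering $\Theta_\D(g)$, unique up to $K^\times$, and then pins down the $K^\times$-ambiguity to $\{\pm1\}$ by matching the infinitesimal datum $\varphi_{\D,\mm}$.

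The remaining steps are formal. The map $\Phi_{\D,\mm}$ just constructed fits by design into the square with $\forget\circ\Phi_{\D,\mm}=\langle\Phi_\D,\varphi_\D\rangle$ on the quotients, and its differential is $\varphi_{\D,\mm}$, so the diagram commutes. For bijectivity I invoke the standard fact that a morphism of central extensions of Harish-Chandra pairs that is the identity on the kernel pair $\langle K^\times/\{\pm1\},K\rangle$ and on the quotient pair $\langle\Aut(\D)_\J,\Der(\D)_\J\rangle$ is automatically an isomorphism — equivalently, apply the five-lemma to the two exact sequences \eqref{J-ext} (mod $\{\pm1\}$) and Corollary~\ref{DM-exact} (mod $\{\pm1\}$), using that the outer vertical maps are identities and the middle differential $\varphi_{\D,\mm}$ is an isomorphism by Lemma~\ref{delta_DM}, which for prounipotent-by-proalgebraic groups upgrades to an isomorphism of the groups. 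The main obstacle, to which most of the work is devoted, is the careful lift of the $P$-action to $\mm$ and the verification that the sign ambiguity is exactly $\{\pm1\}$; once that normalization is fixed via Lemma~\ref{1m}, everything else is bookkeeping with the two central extensions.
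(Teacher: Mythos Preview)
Your overall plan mirrors the paper's: handle the Lie-algebra level via Lemma~\ref{delta_DM}, then integrate to groups using the decomposition $\GG_\J = \k^\times_\GG \times (P \ltimes \GG^{\geq1}_\J)$ (this is exactly Claim~\ref{1semidirect} in the paper), and finally verify compatibility. The key difference, and the genuine gap in your argument, lies in how you treat the factor $P$.

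You write that $P$ ``preserves $\J$, hence preserves $\mm/\hb\mm$, and I must lift this to an action on $\mm$ itself'', proposing to construct this lift via $\exp(\hb\sigma(a))$ or abstractly via Lemma~\ref{wellknown}. But you are overlooking a much more direct fact: since $P$ preserves the Lagrangian $\y$ by definition, the automorphism $\Theta_\D(g)$ preserves the left ideal $\D\y$ itself (not merely $\J$), and therefore descends to an honest automorphism $\Theta_\mm(g)$ of $\mm=\D/\D\y$. This yields a group homomorphism $\Theta_{\D,\mm}\colon P\to\Aut(\D,\mm)$ for free. Your $\exp(\hb\sigma(a))$ description only covers $g$ in the image of the exponential map (and $P$, with reductive Levi $GL(\y)$, is not exhausted by it), while the Lemma~\ref{wellknown} route produces a pointwise lift with no evident reason to be multiplicative; ``pinning down the $K^\times$-ambiguity by matching the infinitesimal datum'' is a local condition and does not by itself guarantee a homomorphism on all of $P$.

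The paper uses $\Theta_{\D,\mm}$ directly: it writes down the explicit map
\[
c \times \bigl(\Sigma(p)\ltimes g\bigr)\ \longmapsto\ \eps_{\Aut}\!\left(c\cdot\sqrt{\det(p|_\y)}\right)\times\bigl(\Theta_{\D,\mm}(p)\ltimes\Phi^{\geq1}_{\D,\mm}(g)\bigr),
\]
which is visibly a group homomorphism modulo $\{\pm1\}$ because $\det$ is a character. The $\sqrt{\det}$ correction (hence the $\{\pm1\}$) is forced by the requirement that the differential equal $\varphi_{\D,\mm}$, and this is checked via equation~\eqref{error}, the integrated form of your Lemma~\ref{1m} observation. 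Bijectivity is then read off from the parallel Levi decomposition of $\Aut(\D,\mm)$ (Claim~\ref{2semidirect}), which plays the role your five-lemma argument would otherwise fill. Your five-lemma shortcut is perfectly valid in principle, but it presupposes that the group map has already been constructed as a morphism of extensions, and that is precisely the step where your treatment of $P$ is incomplete.
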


The rest of the section is devoted to the proof of the proposition.
\smallskip

Let $\G_\J^{\geq i}=\G^{\geq i}\cap\G_\J$.
It is clear that $\G_\J^{\geq 0}$ is a Lie subalgebra of $\G_\J$,
and $\G_\J^{\geq 1}$ 
is a pronilpotent
 Lie ideal of  $\G_\J^{\geq 0}$. 
Thus there is  a  prounipotent normal subgroup $\GG^{\geq 1}_\J
\sset\GG_\J$ that corresponds
to the  ideal $\G_\J^{\geq 1}$. 

\begin{claim}\label{1semidirect} We have 
$$
\GG_\J=\k^\times_\GG \times \bigl(\Sigma(P)\ \ltimes\ \GG^{\geq 1}_\J\bigr),
$$
where $\Sigma$ was introduced in the paragraph following \eqref{sisi} and the 
group $\GG_\J$ in the beginning of this section. 
In particular, one has  $\Lie\GG_\J=\G_\J^{\geq 0}$.
\end{claim}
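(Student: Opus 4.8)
The strategy is to identify $\GG_\J$ explicitly inside $\GG = \k^\times_\GG \times (Sp(\v)\ltimes \GG^{\geq 1})$ by computing which elements preserve the ideal $\J$ (equivalently, by Lemma \ref{jlem}, which elements of $\G$ lie in $\oh\J$), and then to assemble the group-level statement from the Levi-type decomposition \eqref{levi} together with the prounipotent exponential. First I would pass to the Lie-algebra level: by definition $\G_\J^{\geq 0} = \G^{\geq 0}\cap \oh\J$ (using $\G_\J = \oh\J$), and I would intersect the Levi decomposition $\G^{\geq 0} = \k_\G \oplus (\sp(\v)\oplus \G^{\geq 1})$ with $\oh\J$. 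The central line $\k_\G = \oh(\k\hb)$ clearly lies in $\oh\J$ (it maps to $0$ in $\A$, hence into $\A\y$). For the $\sp(\v)$-summand, an element $\sigma(a)$ with $a\in\sp(\v)$ lies in $\oh\J$ iff its image $\tfrac12\{$the associated quadratic function$\}$ lies in $\A\y$; using the explicit formula $\hb\sigma(a) = \tfrac12\sum g_{ij}(x_iy_j+y_jx_i) + (\text{terms in }x x)+(\text{terms in }yy)$ from the proof of Lemma \ref{1m}, this happens precisely when the quadratic function has no pure-$x$ part, i.e.\ exactly when $a$ preserves $\y$, i.e.\ $a\in\fp = \Lie P$. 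For $\G^{\geq 1}$, every element already maps into the maximal ideal of $\A$, but one needs the refined condition that it land in $\A\y$; so $\G_\J^{\geq 1} = \G^{\geq 1}\cap \oh\J$ is a proper pronilpotent ideal, and by construction of $\GG^{\geq 1}_\J$ this is its Lie algebra. Putting these together gives $\Lie\GG_\J = \G_\J^{\geq 0} = \k_\G \oplus (\fp \oplus \G^{\geq 1}_\J)$, which is the infinitesimal form of the claimed decomposition.

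Next I would upgrade this to the group statement. The subgroup $\k^\times_\GG$ is central in $\GG$ and visibly lies in $\GG_\J$ (it acts trivially on $\D$, hence preserves $\J$). Since $\Phi_\D(\Sigma(g)) = \Theta_\D(g)$ for $g\in Sp(\v)$ and $\Theta_\D(g)$ preserves $\J$ iff $g$ preserves $\A\y$ iff $g\in P$, we get $\Sigma(P)\subset \GG_\J$ and, conversely, $\Sigma(Sp(\v))\cap\GG_\J = \Sigma(P)$. The prounipotent group $\GG^{\geq 1}_\J \subset \GG^{\geq 1}$ is by definition the one integrating $\G^{\geq 1}_\J$; since $\G^{\geq 1}_\J$ is $P$-stable (the condition ``lands in $\A\y$'' is preserved by the $P$-action, as $P$ stabilizes $\y$), the semidirect product $\Sigma(P)\ltimes \GG^{\geq 1}_\J$ makes sense inside $\GG$. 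The inclusion $\k^\times_\GG\times(\Sigma(P)\ltimes\GG^{\geq 1}_\J) \subseteq \GG_\J$ is then clear. For the reverse inclusion I would take an arbitrary $u\in\GG_\J$, write it via the decomposition of $\GG$ as $u = c\cdot(\Sigma(g)\ltimes v)$ with $c\in\k^\times_\GG$, $g\in Sp(\v)$, $v\in\GG^{\geq 1}$; since $c$ acts trivially on $\D$ and $v$ acts by an automorphism congruent to the identity modulo $\D^{\geq 2}$, reducing the action of $\Phi_\D(u)$ on $\J/\hb\D = \A\y$ modulo higher-order terms shows $g$ must preserve $\A\y$, hence $g\in P$; and then $v = \Sigma(g)^{-1}c^{-1}u$ lies in $\GG^{\geq 1}\cap\GG_\J = \GG^{\geq 1}_\J$ (this last equality because both sides are prounipotent with the same Lie algebra $\G^{\geq 1}_\J$, and a prounipotent subgroup is determined by its Lie subalgebra).

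**Main obstacle.** The delicate point is the reverse inclusion, specifically the passage from ``$\Phi_\D(u)$ preserves $\J$'' to ``the $Sp(\v)$-component of $u$ preserves $\y$'': one must argue that the prounipotent part $\GG^{\geq 1}$, whose elements act on $\D$ by automorphisms only congruent to the identity modulo higher filtration, cannot ``correct'' a failure of the semisimple part to stabilize $\J$. The clean way is to look at the induced action on the associated graded, or equivalently on $\J/(\J\cap\D^{\geq 2} + \hb\D)$, where the $\GG^{\geq 1}$-part acts trivially and only the image of $g$ in $GL(\v)$ survives; preserving this quotient forces $g(\y)\subseteq\y$. The remaining identification $\GG^{\geq 1}\cap\GG_\J = \GG^{\geq 1}_\J$ is then a standard fact about prounipotent groups (a closed subgroup of a prounipotent group is prounipotent and recovered from its Lie algebra via $\exp$), combined with the Lie-algebra computation $\G^{\geq 1}\cap\oh\J = \G^{\geq 1}_\J$ already done above. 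The final assertion $\Lie\GG_\J = \G_\J^{\geq 0}$ then follows formally, or alternatively is read off directly from the Levi-intersection computation.
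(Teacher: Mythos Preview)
Your proposal is correct and follows essentially the same approach as the paper. The paper packages your ``look at the associated graded'' step into an explicit homomorphism $\tau:\Aut(\D)\to Sp(\v)$ (the induced action on $\D^{\geq 1}/\D^{\geq 2}=\v$), then identifies $\GG_\J/\k^\times_\GG$ as an extension of $P$ by $\Ker(\tau_\J)=\GG^{\geq 1}_\J$ split by $\Sigma|_P$---this is the same argument you sketch, organized as a short exact sequence with section rather than as a direct intersection with the product decomposition of~$\GG$.
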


\begin{proof}[Proof of Claim]
Observe first that, for any $g\in \Aut(\D)$,
we have $g(\D^{\geq1})\sset \D^{\geq1}$
and $g(\D^{\geq2})\sset\D^{\geq2}$,
since $\D^{\geq2}=\hb\D^{\geq1}+(\D^{\geq1})^2$.
It follows that
$g$  induces an automorphism $\tau(g)$
of the vector space $\D^{\geq1}/\D^{\geq2}=\v$.
The map $g\mto \tau(g)$ yields  a  homomorphism
$\tau: \Aut(\D)\to Sp(\v)$. Clearly, one has $\tau(\Aut(\D)_\J)\sset P$.
We also have  the  homomorphism
$\Phi_\D: \GG\to \Aut(\D)$ such that $\Phi_\D(\k^\times_\GG)=1$
and $\Phi_\D(\GG_\J)\sset \Aut(\D)_\J$, by
definition. Thus, there is a well defined composition
$\GG_\J/\k^\times_\GG\stackrel{\Phi_\D}\too
\Aut(\D)_\J\stackrel{\tau}\too P$, to be denoted by $\tau_\J$.
Note that $\G_\J^{\geq1}\subseteq\Ker(\tau_\J)$.

It is clear that we have
 $\Lie\GG_\J\sset \G_\J^{\geq 0}$. Further\blue{more},
the Lie algebra map $\tau_\J^{\Lie}: \Lie(\GG_\J/\k^\times_\GG)\to\fp$ induced by
the group homomorphism
$\tau_\J$ is equal to the composition of natural maps
\[
\Lie(\GG_\J/\k^\times_\GG)\ \into\
\G_\J^{\geq 0}/\k_\G\ \to\ 
\G_\J^{\geq 0}/(\k_\G\oplus\G_\J^{\geq 1})\ =\
(\G^0\cap \G_\J)/\k_\G\ \cong\ 
\fp.\]
where we have used that
 $\G^0\cap \G_\J=\k_\G\oplus\sigma(\fp)$,
so  $(\G^0\cap \G_\J)/\k_\J\cong\fp$. 

Using the inclusion $\G_\J^{\geq1}\subseteq\Ker(\tau_\J)$, we deduce
\[\Lie\GG_\J^{\geq1}\ \subseteq\ \Lie(\Ker(\tau_\J))\ \subseteq\ 
\Ker(\tau_\J^{\Lie})\ =\ \G_\J^{\geq1}\ =\ \Lie\GG_\J^{\geq1}.\]
This implies an equality $\Ker(\tau_\J)=\GG_\J^{\geq1}$,
since both groups are prounipotent.
The proof is completed by observing
that the map  $\Sigma|_P$ provides a section
$P\to\GG_\J\to \GG_\J/\k^\times_\GG$ 
 of the map $\tau_\J$.
\end{proof}

We use the isomorphism   $\varphi_{\D,\mm}$ and
put $\Der^{\geq i}(\D,\mm):=\varphi_{\D,\mm}(\G_\J^{\geq i})$.
Thus,
 $\Der^{\geq 0}(\D,\mm)$ is a Lie subalgebra of $\Der(\D,\mm)$,
resp.  $\Der^{\geq 1}(\D,\mm)$ is  a pronilpotent
ideal of  $\Der^{\geq 0}(\D,\mm)$. Let
$\Aut^{\geq 1}(\D,\mm)$
 be  a prounipotent subgroup of the group $\Aut(\D,\mm)$ corresponding
to the ideal $\Der^{\geq 1}(\D,\mm)$.

Next, we observe  that, for any $g\in P$,
the left ideal $\D\y$ is stable under
the map $\Theta_\D(g):\D\to\D$.
Hence, this map
descends to a map $\Theta_\mm(g):\mm\to\mm$.
The assignment $g\mto (\Theta_\D(g), \Theta_\mm(g))$
gives an injective homomorphism $\Theta_{\D,\mm}: P\to \Aut(\D,\mm)$.

\begin{claim}\label{2semidirect} We have 
\[\Aut(\D,\mm)=
\eps_{\Aut}(\k^\times) \times \bigl(\Theta_{\D,\mm}(P)\ \ltimes\ \Aut^{\geq 1}(\D,\mm)\bigr),
\]
in particular, one has  $\Lie\Aut(\D,\mm)=\Der^{\geq
    0}(\D,\mm)$.
\end{claim}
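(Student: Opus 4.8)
The plan is to establish Claim \ref{2semidirect} as the exact group-level counterpart of Claim \ref{1semidirect}, using the isomorphism $\varphi_{\D,\mm}$ of Lemma \ref{delta_DM} together with Corollary \ref{DM-exact} to transport the semidirect product decomposition of $\GG_\J$ across the central extension. First I would record that $\Lie\Aut(\D,\mm)=\Der(\D,\mm)=\varphi_{\D,\mm}(\oh\J)=\varphi_{\D,\mm}(\G_\J)$ already contains $\Der^{\geq0}(\D,\mm)=\varphi_{\D,\mm}(\G_\J^{\geq0})$ as a subalgebra, and that the decomposition $\G_\J=\k_\G\oplus(\sigma(\fp)\oplus\G_\J^{\geq1})$ coming from \eqref{levi} transports under $\varphi_{\D,\mm}$ to a direct sum decomposition $\Der^{\geq0}(\D,\mm)=\eps_{\Der}(\k)\oplus(\theta_\mm(\fp)\oplus\Der^{\geq1}(\D,\mm))$ of Lie algebras, where $\theta_\mm$ denotes the differential of $\Theta_{\D,\mm}$. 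The point is that $\varphi_{\D,\mm}$ intertwines $\eps_{\Der}$ with the central imbedding $K\into\Der(\D,\mm)$ by the remark preceding Lemma \ref{delta_DM}, and intertwines $\sigma(\fp)\sset\G^0\cap\G_\J$ with $\theta_\mm(\fp)$ by the explicit formula \eqref{sisi} applied to the module $\mm$.

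Next I would build the three pieces of the asserted decomposition. The central factor $\eps_{\Aut}(\k^\times)$ is visible from the definition of $\eps_{\Aut}$ and is clearly central in $\Aut(\D,\mm)$. The Levi factor is the image of the homomorphism $\Theta_{\D,\mm}\colon P\to\Aut(\D,\mm)$ constructed just above the claim; its injectivity was already asserted there (and follows because $\Theta_\D$ is injective on $Sp(\v)$, so already $\forget\ccirc\Theta_{\D,\mm}$ is injective on $P$). The prounipotent factor $\Aut^{\geq1}(\D,\mm)$ is defined as the prounipotent subgroup integrating the pronilpotent ideal $\Der^{\geq1}(\D,\mm)$; since $P$ acts on $\Der^{\geq1}(\D,\mm)$ (because $\Theta_{\D,\mm}(P)$ normalizes it, this ideal being $P$-stable by Claim \ref{1semidirect} transported through $\varphi_{\D,\mm}$), the semidirect product $\Theta_{\D,\mm}(P)\ltimes\Aut^{\geq1}(\D,\mm)$ makes sense and maps to $\Aut(\D,\mm)$. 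Multiplying by the central $\k^\times$ gives a homomorphism $\k^\times\times(\Theta_{\D,\mm}(P)\ltimes\Aut^{\geq1}(\D,\mm))\to\Aut(\D,\mm)$, which I claim is an isomorphism of proalgebraic groups.

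To prove this, I would apply $\forget$ and chase Corollary \ref{DM-exact}. Under $\forget$ the central $\eps_{\Aut}(\k^\times)$ dies, and the image of $\Theta_{\D,\mm}(P)\ltimes\Aut^{\geq1}(\D,\mm)$ is precisely $\Theta_\D(P)\ltimes\Aut^{\geq1}(\D)_\J$, which by Claim \ref{1semidirect} (pushed forward along $\la\Phi_\D,\varphi_\D\ra$, whose restriction to $\GG_\J/\k^\times_\GG$ is an isomorphism onto $\Aut(\D)_\J$ modulo the $K^\times$ noise — more precisely, $\Phi_\D$ kills $\k^\times_\GG$ and identifies $\Sigma(P)\ltimes\GG^{\geq1}_\J$ with its image) equals the full subgroup $\tau^{-1}(P)\cap\Aut(\D)_\J=\Aut(\D)_\J$. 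So the composite is surjective onto $\Aut(\D)_\J$; combined with $\eps_{\Aut}(\k^\times)$ surjecting onto $\Ker\forget_{\Aut}=\eps_{\Aut}(K^\times)\cap(\text{identity component})$ — here I use $\Ker\forget_{\Aut}=\eps_{\Aut}(K^\times)$ from Corollary \ref{DM-exact} and the fact that the part of $\eps_{\Aut}(K^\times)$ not in $\eps_{\Aut}(\k^\times)$ is $\eps_{\Aut}(1+\hb K)$, which lies in $\Aut^{\geq1}(\D,\mm)$ (as $\varphi_{\D,\mm}(\hb K)\sset\Der^{\geq1}(\D,\mm)$, since $\hb K=\hb K[[\hb]]\sset\G_\J^{\geq1}$) — surjectivity of our homomorphism onto all of $\Aut(\D,\mm)$ follows by the five-lemma for the two central extensions. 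Injectivity follows the same way: an element of the kernel maps to the identity in $\Aut(\D)_\J$, hence lies in $\eps_{\Aut}(K^\times)$ by Corollary \ref{DM-exact}, and then a direct inspection of the $K^\times=\k^\times\times(1+\hb K)$ factorization against the $\k^\times$ and $\Aut^{\geq1}$ factors of the source forces it to be trivial. Finally the Lie algebra statement $\Lie\Aut(\D,\mm)=\Der^{\geq0}(\D,\mm)$ drops out by differentiating, since $\Lie$ of the source is $\eps_{\Der}(\k)\oplus(\theta_\mm(\fp)\oplus\Der^{\geq1}(\D,\mm))=\Der^{\geq0}(\D,\mm)$ by the first paragraph. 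The main obstacle will be the bookkeeping around the two different $K^\times$'s — the one from the central extension and the naive $\eps_{\Aut}(\k^\times)$ factor — and checking carefully that $\eps_{\Aut}(1+\hb K)$ is absorbed into $\Aut^{\geq1}(\D,\mm)$ rather than being a separate factor; once that is pinned down, the five-lemma comparison with Claim \ref{1semidirect} is routine.
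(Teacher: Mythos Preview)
Your overall strategy is sound and aligns with the paper's sketch: both arguments use the forgetful map $\forget_{\Aut}$ together with Corollary~\ref{DM-exact} to reduce the decomposition of $\Aut(\D,\mm)$ to the already-established structure of $\Aut(\D)_\J$ coming from Claim~\ref{1semidirect}. The paper phrases this as identifying $\Ker(\tau\ccirc\forget_{\Aut})/\eps_{\Aut}(\k^\times)$ as the prounipotent group with Lie algebra $\Der^{\geq1}(\D,\mm)$; you phrase it as a five-lemma comparison of central extensions. These are equivalent, and your bookkeeping with $\eps_{\Aut}(1+\hb K)\sset\Aut^{\geq1}(\D,\mm)$ is correct.

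There is, however, a real slip in your first paragraph. You claim that $\varphi_{\D,\mm}$ carries $\sigma(\fp)$ onto $\theta_{\D,\mm}(\fp)$, citing \eqref{sisi}. That formula concerns only the action on $\D$; for the action on $\mm$ the analogue \emph{fails}, and this is precisely the content of equation~\eqref{error}: one has $(\varphi_\mm\ccirc\sigma)(a)=\theta_\mm(a)+\half\Tr(a|_\y)\cdot\id_\mm$, so $\varphi_{\D,\mm}(\sigma(\fp))\neq\theta_{\D,\mm}(\fp)$ as subspaces of $\Der(\D,\mm)$. This does not destroy your argument, because the discrepancy lands in $\eps_{\Der}(\k)$, and hence $\eps_{\Der}(\k)\oplus\varphi_{\D,\mm}(\sigma(\fp))=\eps_{\Der}(\k)\oplus\theta_{\D,\mm}(\fp)$; so your vector-space decomposition of $\Der^{\geq0}(\D,\mm)$ is still correct. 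But the justification you give is wrong, and the point is not innocuous: this very shear is what forces the $\{\pm1\}$ quotient in Proposition~\ref{Lie-extensions} and ultimately produces the $\half c_1(K_Y)$ term in the main theorem. You should fix the reasoning here rather than appeal to a transport that does not exist.
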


Using the Claim, we see that the first projection of the
direct product in the  RHS of the
isomorphism above provides a canonical homomorphism
\beq{can_hom}
\varkappa:\ \Aut(\D,\mm)\to\k^\times.
\eeq

\begin{proof}[Sketch of Proof of Claim]
First one shows, similarly to the Lie algebra case,
that 
$\Ker(\forget_{\Aut})=\eps_{\Aut}(K^\times)$. Further,
we know that $\tau(\Aut(\D)_\J)\sset P$ and
$\Ker(\tau)$ is a prounipotent group.
It follows
that $\tau\ccirc\forget_{\Aut}(\Aut(\D,\mm))\sset P$
and that $\Ker(\tau\ccirc\forget_{\Aut})/\eps_{\Aut}(\k^\times)$
 is a prounipotent group.
The proof is now completed by showing that the
Lie algebra of the latter group equals the
Lie algebra of the group $\Aut^{\geq 1}(\D,\mm)$.

We leave details to the reader.
\end{proof}

%\subsection{} ``the main result of this section'' should be proved
%within the section I guess... -- D.K.
Write $a\mto \theta_{\D,\mm}(a)=(\theta_\D(a),\theta_\mm(a))$ 
for the 
Lie algebra homomorphism $\theta_{\D,\mm}: \fp\to\Der(\D,\mm)$ induced
by the group homomorphism $\Theta_{\D,\mm}$. 
There is a diagram
\beq{non}
\xymatrix{
\fp\ar[dr]_<>(0.5){\theta_{\D,\mm}}\ar[r]^<>(0.5){\sigma}&\oh\J\ar[d]^<>(0.5){\varphi_{\D,\mm}}\\
&\Der(\D,\mm)
}
\eeq

It is immediate from definitions that
$\theta_\D=\varphi_\D\ccirc\sigma$. However,
the corresponding diagram involving the module $\mm$
does {\em not} commute; indeed, one has
an equation
\beq{error}
(\varphi_\mm\ccirc\sigma)(a)=\theta_\mm(a)+\half\Tr(a|_\y)\cdot \id_\mm,\qquad\forall a\in\fp.
\eeq
To prove \eqref{error}, note that  for $u\in\D$, we have
$\theta_\D(a)(u)(1_\mm)=\theta_\mm(a)(u(1_\mm))$, by \eqref{dm}.
Hence, using \eqref{sisi} and  Lemma \ref{1m}, we compute
\begin{align*}
(\varphi_{\mm}\ccirc\sigma)(a)(u (1_\mm))&=
(\sigma(a)u)(1_\mm)=[\sigma(a),u](1_\mm)+u(\sigma(a)(1_\mm))\\
&=
\theta_\D(a)(u)(1_\mm)+ u(\half\Tr(a|_\y)\cdot 1_\mm)\\
&=
\theta_\mm(a)(u(1_\mm))+\half\Tr(a|_\y)\cdot u(1_\mm).
\end{align*}

This proves equation \eqref{error} since any element of $\mm$
has the form $u(1_\mm)$ for some  $u\in\D$.\qed

\begin{proof}[Proof of Proposition \ref{Lie-extensions}]
The isomorphism $\varphi_{\D,\mm}: \G^{\geq 1}\iso \Der^{\geq
  1}(\D,\mm)$,
of pronilpotent Lie algebras, can be exponentiated to an isomorphism
$\Phi^{\geq 1}_{\D,\mm}: \GG^{\geq 1}\iso \Aut^{\geq  1}(\D,\mm)$, 
of the corresponding prounipotent groups.
We define a map 
$$
\k^\times_\GG/\{\pm 1\} \times \bigl(\Sigma(P)\ \ltimes\ \GG^{\geq 1}_\J\bigr)\to
\eps_{\Aut}(\k^\times)/\{\pm 1\} \times \bigl(\Theta_{\D,\mm}(P)\ \ltimes\
\Aut^{\geq 1}(\D,\mm)\bigr)
$$
by the formula
\[c\times \bigl(\Sigma(p) \ltimes g\bigr)\ \mto\
\eps_{\Aut}\left({c\cdot\mbox{$\sqrt{\det(p|_\y)}$}}\right)\times \bigl(\Theta_{\D,\mm}(p)\ltimes
\Phi^{\geq 1}_{\D,\mm}(g)\bigr).
\]

It is clear that this map is an isomorphism of proalgebraic groups.
Thanks to Claims \ref{1semidirect} and \ref{2semidirect}, 
the above map  gives an isomorphism
\[ \Phi_{\D,\mm}:\ \GG_\J/\{\pm 1\} \iso \Aut(\D,\mm)/\{\pm 1\} .\]
Moreover, equation \eqref{error} insures that the map
$\Lie\GG_\J\to \Lie\Aut(\D,\mm)$, the Lie algebra homomorphism 
induced by $\Phi_{\D,\mm}$,
is equal to the map $\varphi_{\D,\mm}|_{\GG^{\geq 0}_\J}:\
\GG^{\geq 0}_\J\to \Der^{\geq 0}(\D,\mm)$.
The latter map is a  Lie algebra isomorphism. We conclude
that the pair of maps $\la\Phi_{\D,\mm}, \varphi_{\D,\mm}\ra$
yields an isomorphism  of Harish-Chandra pairs as required
in the statement of Proposition ~\ref{Lie-extensions}.
\end{proof}

\section{Harish-Chandra Torsors}
\label{sec4}
\subsection{}\label{ssec4.1} We will use the notation $\o:=\o_\k$
and write $\T_Y$ for the tangent sheaf of a smooth variety $Y$.

The following definition, that has been used  in \cite{BK}, is due to
Beilinson and Drinfeld \cite{BD}.

\begin{definition} Let $Y$ be a smooth algebraic variety and
$\la G,\g\ra$  a  Harish-Chandra pair.
A
 \emph{transitive Harish-Chandra torsor}  (or \textit{transitive torsor}
for short) over $Y$ is a $G$-torsor $\pp$ over 
$Y$ equipped with a $G$-equivariant Lie algebra 
homomorphism $\g \to H^0(\pp, \T_\pp)$
that extends the  map $\Lie G \to H^0(\pp, \T_\pp)$,
the differential of the $G$-action on $\pp$,
and induces an isomorphism 
$\g \otimes \mathcal{O}_\pp \simeq  \T_\pp$,
of locally free sheaves on $\pp$.
\end{definition}

Let $\a$ be a vector space viewed as an additive algebraic
group. The Lie algebra of this group is identified with $\a$,
so the pair $\la\a,\a\ra$ is a \hc pair.
Further, let
\beq{aext}
1 \to \langle \a,\a \rangle \to \langle \wt{G}, \wt{\frak g}
\rangle \to \langle G, \g \rangle \to 1.
\eeq
be a central extension of Harish-Chandra
pairs, to be denoted $c$. 

In \cite[Proposition 2.7]{BK}, the authors associate
to any
transitive $\langle G, \g\ra$-torsor
$\pp$ on $Y$ a class $\Loc(\pp, c) \in 
\a\o H^2_\dr (Y)$, sometimes also denoted
$\Loc(\pp,\wt{G}, \wt{\frak g})$,  such that the existence of a lift of $\pp$ 
to a transitive torsor $\wt{\pp}$ over 
$\langle \wt{G}, \wt{\frak g}\ra$ 
 is equivalent 
to the vanishing of the class $\Loc(\pp, c)$.

We now recall the construction of  $\Loc(\pp, c)$
since  some  functorial properties of the
construction will be used  later in the paper.
\smallskip

{\sc{Construction:}}\en We start with
the following exact sequence of $\g$-modules:
\beq{ext_a}
0 \to \a \to U_+(\wt\g)/ \big(\a \cdot U_+(\wt\g) \big)
\to U(\g) \to \k \to 0,
\eeq
where $U(-)$ denotes the universal enveloping algebra of a 
Lie algebra and $U_+(-)$ its augmentation ideal. 
Note that the adjoint action of the
\hc pair $\la\wt G,\wt\g\ra$ on itself
factors through  $\la G,\g\ra$.
Therefore,  \eqref{ext_a} is an extension of $\g$-modules.
This extension provides an explicit representative for
the class in $H^2(\g,\a)=\Ext^2_{\g}(\k, \a)$ that corresponds
to the Lie algebra central extension $\a\into\wt\g\onto\g$.

The $G$-action on each term in  \eqref{ext_a} gives an associated
vector bundle on $Y$ corresponding to the $G$-torsor $\pp$.
Moreover, the $\g$-action provides each of these vector bundles with a
flat connection. Further,
the exact sequence  \eqref{ext_a} induces 
an exact sequence of the associated vector bundles, which is compatible
with the connections. The latter exact sequence gives an extension class
in $\Ext^2_{\text{loc syst}}(\mathcal{O}_Y, 
\a \otimes \mathcal{O}_Y)$, where $\Ext^2_{\text{loc syst}}$
denotes the $\Ext$-group in the category of
vector bundles with flat connections, i.e.,
the category of local systems (not necessarily of finite rank,
in general). 
One defines $\Loc(\pp, c)\in \a\otimes H^2_\dr (Y)$
to be the element that corresponds to the extension class
via the canonical isomorphism $\Ext^2_{\text{loc syst}}(\mathcal{O}_Y, 
\a \otimes \mathcal{O}_Y)=\a\otimes H^2_\dr (Y)$.

The following result is immediate
from the above construction of the class $\Loc(\pp, c)$.

\begin{corollary}\label{functorial} Fix a \hc pair $\la G,\g\ra$
and a central extension $c$ as in \eqref{aext}.

\vi Let $f: \la H,{\fra h}\ra\to\la G,\g\ra$ be a morphism of \hc pairs
and let $f^*(c)$ denote the extension of $\la H,{\fra h}\ra$
by $\la\a,\a\ra$ obtained by pull-back of 
\eqref{aext} via $f$. 
Then, in $\a\otimes H^2_\dr (Y) $, we have
$\dis\Loc(\pp,f^*(c))=\Loc(\pp,c)$.

\vii Let $f: \a\to\a'$ be a linear map,
%let $\id\otimes f$ denote the induced map $\a\otimes H^2_\dr (Y)
%\to \a'\otimes H^2_\dr (Y) $, 
and let
$f_*(c)$ be the extension of  $\la G,\g\ra$
by $\la\a',\a'\ra$ obtained by push-out  of 
\eqref{aext} via $f$.  
Then, we have
$\dis\Loc(\pp,f_*(c))=(f\otimes \id)(\Loc(\pp,c))$,
where $f\otimes\id:\a\otimes H^2_\dr (Y)
\to \a'\otimes H^2_\dr (Y) $ is the map induced by $f$.
\end{corollary}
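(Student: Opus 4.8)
The plan is to isolate the only non-formal ingredient in the construction of $\Loc(\pp,c)$ and then let functoriality do the rest. The key observation is that this construction factors through Lie algebra cohomology: the four-term sequence \eqref{ext_a} is the standard $2$-extension representing the Yoneda class $[c]\in H^2(\g,\a)=\Ext^2_\g(\k,\a)$ attached to the central extension of Lie algebras underlying $c$, and the operation ``pass to the associated bundle equipped with its flat connection'' is an exact functor $\Psi_\pp$ from the category of $\g$-modules carrying a compatible $G$-action to the category of local systems on $Y$. Applying $\Ext^2(\oo_Y,-)$ to $\Psi_\pp$ yields a homomorphism
\[\beta_\pp\colon\ H^2(\g,\a)\ \longrightarrow\ \Ext^2_{\text{loc syst}}(\oo_Y,\a\o\oo_Y)\ =\ \a\o H^2_\dr(Y),\]
and by construction $\Loc(\pp,c)=\beta_\pp([c])$, the equality on the right being the identification used in the definition of $\Loc$. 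Thus both parts of the corollary reduce to naturality statements for $\beta_\pp$ and for the classes $[c]$.

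\emph{Part (i).} Here $\pp$ is a transitive $\la H,\fra h\ra$-torsor, and on the right-hand side $\Loc(\pp,c)$ is read as $\Loc(f_*\pp,c)$, where $f_*\pp$ is the induced transitive $\la G,\g\ra$-torsor. First I would record that the central extension $f^*(c)$ has as underlying $\fra h$-module sequence the version of \eqref{ext_a} for $\la\wt{H},\wt{\fra h}\ra=\la\wt{G},\wt{\g}\ra\times_{\la G,\g\ra}\la H,\fra h\ra$, which represents the restricted class $f^*[c]\in H^2(\fra h,\a)$ (restriction along $\fra h\to\g$, trivial coefficients); hence $\Loc(\pp,f^*(c))=\beta^{\fra h}_\pp(f^*[c])$. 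Next I would use the elementary compatibility of associated bundles with induction of torsors: for a $\g$-module $V$, the associated bundle of $f_*\pp=(\pp\times G)/H$ is canonically the associated bundle of $\pp$ for $V$ viewed as an $\fra h$-module via $f$, and the flat connections --- coming respectively from the $\g$- and the $\fra h$-action together with the trivializations $\g\o\oo_{f_*\pp}\cong\T_{f_*\pp}$ and $\fra h\o\oo_\pp\cong\T_\pp$ --- match under this identification. In functorial terms this says $\Psi_\pp\circ(\text{restriction along }f)=\Psi_{f_*\pp}$, hence $\beta^{\fra h}_\pp\circ f^*=\beta^\g_{f_*\pp}$. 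Combining, $\Loc(\pp,f^*(c))=\beta^{\fra h}_\pp(f^*[c])=\beta^\g_{f_*\pp}([c])=\Loc(f_*\pp,c)$.

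\emph{Part (ii).} A linear map $f\colon\a\to\a'$, regarded as a morphism of trivial $\g$-modules, induces the push-out extension $f_*(c)$, whose underlying Yoneda class is $f_*[c]\in H^2(\g,\a')$. Applying the exact functor $\Psi_\pp$ and using that it sends the trivial module $\a$ (resp.\ $\a'$) to $\a\o\oo_Y$ (resp.\ $\a'\o\oo_Y$) with trivial connection and sends $f$ to $f\o\id$, one gets $\beta_\pp(f_*[c])=(f\o\id)_*\,\beta_\pp([c])$ in $\Ext^2_{\text{loc syst}}(\oo_Y,\a'\o\oo_Y)$; under the identification with $\a'\o H^2_\dr(Y)$ this is precisely $(f\o\id)(\Loc(\pp,c))$, as claimed.

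The two Yoneda-class naturality statements (restriction and push-out) and the compatibility of the associated-bundle functor with maps of coefficient modules are routine. The one point requiring care --- and the main obstacle --- is the claim in Part (i) that induction of transitive torsors along a morphism of Harish-Chandra pairs intertwines the ``associated local system'' functors, i.e.\ is compatible with the flat connections and not merely with the underlying bundles; once this is verified (it is immediate from the explicit description of $f_*\pp$ and of the connection built from the $\g$-action), both parts follow formally.
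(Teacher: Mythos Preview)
Your argument is correct and is precisely the unpacking of what the paper means when it says the corollary is ``immediate from the above construction of the class $\Loc(\pp,c)$''; the paper gives no further proof. Your reading of Part~(i) --- that $\pp$ is a transitive $\la H,\fra h\ra$-torsor and the right-hand side is to be read as $\Loc(f_*\pp,c)$ for the induced $\la G,\g\ra$-torsor --- is exactly how the paper applies the corollary in the proof of Lemma~\ref{loc_j}, and your factorization through the Yoneda class in $H^2(\g,\a)$ together with exactness of the associated-local-system functor is the intended justification.
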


\subsection{}\label{triv} 
Fix a central extension $c$ of Harish-Chandra pairs
 \beq{triv2}c:
1 \to \langle \k^\times, \k \rangle \to \langle \wt{G}, \wt{\frak g}
\rangle \to \langle G, \g \rangle \to 1.
\eeq
and a splitting $\io: G \to \wt{G}$  of the projection $\wt G\to G$
(the Lie algebra projection $\wt\g\to\g$ is,
however, not assumed to be split, in general).
The splitting induces a group isomorphism $\wt{G} \simeq \k^\times 
\times G$ (that may depend on the choice of ~$\io$). 

In the above setting, to any transitive  Harish-Chandra 
$\la G, \g\ra$-torsor $f: Z\to Y$
one associates a class $\alpha(Z,c,\io)\in H^2(\Om^{\geq1}_Y)$
as follows.

Tensoring the Lie algebra
central extension $\k\into\wt\g\onto\g$ by $\oo_Z$ 
and using the isomorphism $\g\o\oo_Z\cong \T_Z$
yields
an exact sequence
\beq{picard}0\to \oo_Z\too \wt{\g}\o\oo_Z\stackrel{\eta}\too \T_Z\to 0,
\eeq
of $\oo_Z$-modules.  As explained in  \cite[\S1.2.2]{BB}, 
there is a natural Lie algebroid structure
on $\wt{\g}\o~\oo_Z$ such that the corresponding anchor map
is the map $\eta$ in \eqref{picard}.
Further, the group $G$ acts on $\wt{\g}$
and on $Z$, giving the sheaf  $\wt{\g}\o\oo_Z$
a  $G$-equivariant structure. 
Moreover,
the composition of Lie algebra maps 
\[\Lie G\stackrel{{\mathrm d}\io}\too\Lie \wt G\into\wt\g=\wt\g\o 1\into
\wt{\g}\o\oo_Z\] 
 gives a partial splitting of $\eta$.
Using the terminology
of \cite[\S2.1.3]{BB}, the above data gives the sheaf $\wt{\g}\o\oo_Z$ 
the structure of a $G$-equivariant  {\em Picard algebroid} on $Z$.
Applying equivariant descent for Picard algebroids,
as explained in \cite[\S1.8.9]{BB},
one obtains from  $\wt{\g}\o\oo_Z$ a Picard algebroid
$\wt{\g}_Y$ on $Y$. Explicitly, $\wt{\g}_Y$ is the quotient
of $[f_* (\wt{\g}\o\oo_Z)]^G$ by the image of $[f_* (\Lie G \o 
\oo_Z)]^G$. 

We define  $\alpha(Z, c,\io)\in H^2(\Om^{\geq 1}_Y)$ to be 
the
Atiyah class of $\wt{\g}_Y$.

\begin{rem}
\label{io_rem} 
The class $\alpha(Z, c,\io)$ only depends
on the action of $G$ on $\wt{G}$ and the differential of $\io$.
\end{rem}

Let $\pp$ be  $\la \wt{G}, \wt{\g}\ra$-torsor on $Y$.
Given an extension \eqref{triv2}, one obtains a
 $G$-torsor $\k^\times\backslash \pp\to Y$, a push-out
of $\pp$ via the  projection $\wt G\to G$.
Further, given a section $\io: G\to\wt G$, as above,
one obtains a group decomposition $\wt{G} 
\simeq \k^\times\times G\to\k^\times$.
Hence, there is
$\k^\times$-torsor $G\backslash \pp\to Y$, a push-out
of $\pp$ via the first projection $\wt{G} 
\simeq \k^\times\times G\to\k^\times$.

\begin{lemma}\label{bij} The assignment $\pp\mto (\k^\times\backslash \pp,\,
G\backslash \pp)$ yields a bijective map from
the set (of isomorphism classes) of 
transitive $\la \wt{G}, \wt{\g}\ra$-torsors
on $Y$ onto  the set (of isomorphism classes) of
pairs $(Z,L)$, where $Z$ is a transitive $\la G, \g\ra$-torsor
and $L$  is a $\k^\times$-torsor on $Y$ satisfying
 an equation $\alpha(Z, c,\io)=c_1(L)$.
\end{lemma}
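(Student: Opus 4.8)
The plan is to construct the inverse map explicitly and check it is well-defined on isomorphism classes, using the equivariant-descent picture of Picard algebroids as the main bookkeeping device. Given a transitive $\la\wt G,\wt\g\ra$-torsor $\pp\to Y$, set $Z:=\k^\times\backslash\pp$ and $L:=G\backslash\pp$ as in the statement; then $\pp\to Z$ is a $\k^\times$-torsor and $\pp\to L$ is a $G$-torsor, and (using the splitting $\io$ to identify $\wt G\simeq\k^\times\times G$) $\pp$ is recovered as the product $Z\times_Y L$ together with the diagonal $\wt G$-action. So the only content is that this decomposition is the whole story, i.e. that the map $\pp\mto(Z,L)$ is injective and that its image is exactly the set of pairs satisfying $\alpha(Z,c,\io)=c_1(L)$.

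First I would unwind what $\alpha(Z,c,\io)=c_1(L)$ means in terms of Picard algebroids. By definition $\alpha(Z,c,\io)$ is the Atiyah class of the Picard algebroid $\wt\g_Y$ obtained from $Z$ by equivariant descent of $\wt\g\o\oo_Z$; and $c_1(L)$ is the Atiyah class of the Picard algebroid $A_L$ of the $\k^\times$-torsor $L$ (the sheaf of first-order differential operators on $L$ commuting with the $\k^\times$-action, i.e. the classical Atiyah algebroid). The classification of Picard algebroids on $Y$ (Beilinson--Bernstein, \cite[\S2.1]{BB}) says that two Picard algebroids are isomorphic if and only if they have the same class in $H^2(\Om^{\geq 1}_Y)$, and that the groupoid of Picard algebroids with a fixed Atiyah class is a torsor under $H^1(\Om^{\geq 1}_Y)$; in particular an isomorphism $\wt\g_Y\iso A_L$ exists precisely when the classes agree. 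So the equation $\alpha(Z,c,\io)=c_1(L)$ is exactly the condition for such an isomorphism to exist.

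Next I would produce the correspondence at the level of torsors. Given $\pp$, the Picard algebroid attached to it can be read off in two ways: as $\wt\g_Y$ (descending $\wt\g\o\oo_\pp$ along $\pp\to Z$ — but since $\pp\to Z$ is obtained from $\pp$ by killing $\k^\times$, this is the same as descending $\wt\g\o\oo_Z$ along $Z\to Y$, which is $\wt\g_Y$), and also as the algebroid of the $\k^\times$-torsor $\pp\to Z$ pushed forward to $Y$; the $G$-equivariant structure on $\pp\to Z$ together with the partial splitting coming from $\io$ is exactly what makes these two identifications compatible. This yields a canonical isomorphism $\wt\g_Y\cong A_L$, so $\alpha(Z,c,\io)=c_1(L)$; this proves the assignment lands in the claimed set. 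Conversely, from a pair $(Z,L)$ with $\alpha(Z,c,\io)=c_1(L)$ I would build $\pp$ as $Z\times_Y L$ with the $\wt G\simeq\k^\times\times G$-action; the transitivity datum $\wt\g\o\oo_\pp\iso\T_\pp$ is assembled from the transitivity datum $\g\o\oo_Z\iso\T_Z$ on $Z$, the Atiyah sequence of $L$, and the chosen isomorphism $\wt\g_Y\cong A_L$ furnished by the hypothesis on Atiyah classes. One checks directly that $\k^\times\backslash\pp\cong Z$ and $G\backslash\pp\cong L$, so the two constructions are mutually inverse on isomorphism classes; injectivity of $\pp\mto(Z,L)$ follows because $\pp$ is reconstructed from $(Z,L)$ (with the chosen splitting $\io$ fixed throughout), using Remark \ref{io_rem} to see the reconstruction depends only on the data that enter.

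The main obstacle is the compatibility check in the middle step: verifying that the Picard algebroid $\wt\g_Y$ obtained by equivariant descent from $\pp\to Z$ is \emph{canonically} the Atiyah algebroid of the $\k^\times$-torsor $L=G\backslash\pp$, and conversely that starting from $(Z,L)$ and a chosen isomorphism of algebroids one really does get a transitive $\wt\g$-structure on $Z\times_Y L$ (not merely a connection-less $\wt G$-torsor). This is the point where the interplay between the group-level splitting $\io$, the (generally non-split) Lie-algebra extension $\wt\g$, and equivariant descent all has to be handled carefully; once it is in place, the bijectivity is formal from the classification of Picard algebroids recalled above.
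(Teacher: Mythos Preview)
Your proposal is correct and follows essentially the same strategy as the paper: decompose $\pp\cong Z\times_Y L$ via the splitting $\io$, and identify the equation $\alpha(Z,c,\io)=c_1(L)$ with the existence of the required anchor map making $\pp$ transitive. The only cosmetic difference is that the paper carries out the comparison upstairs on $Z$ --- identifying $\wt\g\o\oo_Z$ with the Atiyah algebroid $\mathsf{At}(\pp/Z)$ of the $\k^\times$-torsor $\pp\to Z$ and invoking the bijection between $G$-equivariant Atiyah classes on $Z$ and Atiyah classes on $Y$ --- whereas you phrase everything after descent to $Y$ in terms of $\wt\g_Y$ and $A_L$; these are equivalent viewpoints. (One harmless slip: the remark that Picard algebroids with a fixed Atiyah class form a torsor under $H^1(\Om^{\geq1}_Y)$ is not correct --- the isomorphism class is determined by the class in $H^2(\Om^{\geq1}_Y)$ --- but you only use the latter fact.)
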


\begin{proof} 
Given any $\wt G$-torsor $\pp$,
put $L:=G\backslash\pp,\
Z:=\k^\times\backslash \pp$, and let
the group  $\k^\times \times G$ act on
$L\times_YZ$ by $t\times g:l\times z\mto t(l)\times g(z)$
(i.e. $\k^\times$ acts on the $L$ factor and $G$ on the $Z$ factor).
Then the map $p\mto (G p)\times (\k^\times p)$ gives
a $\wt G$-equivariant isomorphism $\pp\iso L\times_YZ$.
Conversely, given a $G$-torsor $Z$ and a $\k^\times$-torsor
$L$, put $\pp:=Z\times_YL$ and   let $\wt G$ act on $\pp$ as above.
It is clear that this
makes $\pp\to Y$ a $\wt G$-torsor. 

Thus, proving the lemma amounts to
showing that the equation $c_1(L)=$ $\alpha(Z,c,\io)$
insures that there exists a $G$-equivariant Lie algebroid structure on
$\wt \g\o\oo_{\pp}$ which is compatible with
 the second projection
$\pr: \pp=L\times_YZ\to Z$ (in the sense that the
projection of $\wt \g$ onto its quotient mod $\k$ agrees with the 
differential $\T_\pp \to \pr^*\T_Z$).  
It suffices to
construct the corresponding anchor map $\eta:
\wt {\g}\o\oo_\pp\to\T_\pp$.
To this end, let
$\mathsf{At}(\pp/Z)=(\pr_*\T_Z)^{\kt}$ be the
 Atiyah algebra of  the $\kt$-torsor
$\pr: \pp\to Z$.
Since this torsor $\pp\to Z$ is a pull-back of the torsor
$L\to Y$, for the Atiyah classes we have
\[At(\pp/Z)=\pr^*c_1(L)=\pr^*\alpha(Z, c,\io).\]
The class on the right equals, by construction
of $\alpha(Z, c,\io)$, the class of the extension \eqref{picard}
(we use the fact that Atiyah classes on $Y$ are in bijective
correspondence with $G$-equivariant Atiyah classes on $Z$).
We deduce that there is an isomorphism $\wt\g\o\oo_Z\iso 
\mathsf{At}(\pp/Z)$,
of $G$-equivariant Lie algebroids on $Z$. We define
 $\eta$ to be this isomorphism.
\end{proof}

\subsection{} We return to the setup of \S\ref{sec3.2}.

We put $\bar\GG_\J=\GG_\J/\k_\GG^\times$,
resp. $\bar\G_\J=\G_\J/\k_\J$.
From Claim \ref{1semidirect}
we deduce
a natural isomorphism
$\ \bar\GG_\J\cong \Sigma(P)\ltimes
\GG_\J^{\geq1}$ and hence a direct product decomposition
\beq{dec_J}
\GG_\J\cong \k^\times\times 
\bigl(\Sigma(P)\ltimes
\GG_\J^{\geq1}\bigr)\cong\k^\times\times
\bar\GG_\J.
\eeq
The above decomposition yields
a splitting $\io_\J: \bar\GG_\J\to \GG_\J$ of the
canonical projection $ \GG_\J\to \bar\GG_\J$.
Observe next  the assumptions 
formulated at the beginning
of section \ref{triv} hold in the case where
$G=\bar\GG_\J,\ \wt\g=\G_\J$, and
$\io_\J$ is the natural imbeding.

We  also consider a \hc pair
\[\langle \oa, \od \rangle\
:=\
\langle \Aut(\D, \mm)/\eps_{\Aut}(\k^\times),\, \Der(\D, \mm)/\eps_{\Der}(\k) \rangle.
\]
The isomorphism of Claim  \ref{2semidirect},
yields a  direct product decomposition
\beq{dec_der}
\Aut(\D,\mm)\cong\k^\times\times\oa.
\eeq
and we denote by $\io_{\Der}: \oa \to \Aut(\D,\mm)$ the 
homomorphism that results from the imbedding of the second
  factor. Again, 
the data $G=\oa,\ \wt\g=\Der(\D,\mm)$,
and  $\io_{\Der}$,
satisfy the assumptions 
formulated at the beginning
of  section \ref{triv}.

The isomorphism $\la\Phi_{\D,\mm},\varphi_{\D,\mm}\ra$, of Proposition \ref{Lie-extensions},
induces an  isomorphism $\la\bar\Phi_{\D,\mm},\bar\varphi_{\D,\mm}\ra$,
of \hc pairs,
in the following  diagram
\beq{barphi}
\xymatrix{
\la K^\times/\k^\times, K/\k\ra\
 \ar@{^{(}->}[r]\ar@{=}[d]^<>(0.5){\id}
&\la\bar\GG_\J,\bar\G_\J\ra\ar@{->>}[r]\ar[d]_<>(0.5){\cong}
^<>(0.5){\la\bar\Phi_{\D,\mm},\bar\varphi_{\D,\mm}\ra}
&
\la\Aut(\D)_\J, \Der(\D)_\J\ra\ar@{=}[d]^<>(0.5){\id}\\
\la K^\times/\k^\times, K/\k\ra\
\ar@{^{(}->}[r]&
\la\oa,\od\ra\ar@{->>}[r]&
\la\Aut(\D)_\J, \Der(\D)_\J\ra 
}
\eeq
of central extensions of Harish-Chandra pairs.

\begin{rem}
We note that because of noncommutativity of
diagram \eqref{non}, the
map $\varphi_{\D,\mm}$ does not respect
the Lie algebra decompositions resulting from decompositions \eqref{dec_J}
and \eqref{dec_der}, respectively.
Therefore, the pair of maps
$(\bar\Phi_{\D,\mm}, \varphi_{\D,\mm}):\
\la\bar\GG_\J,\G_\J\ra\to
\la\oa, \Der(\D,\mm)\ra$
does {\em not} form a morphism of \hc pairs.
\erem

Given a  $\la\oa,\od\ra$-torsor 
$Z$, let $\bar\Phi_*Z$ denote
the $\la\bar\GG_\J,\bar\G_\J\ra$-torsor  obtained
by transporting the torsor structure via
the isomorphism $\la\bar\Phi_{\D,\mm}, \bar\varphi_{\D,\mm}\ra$,
of \hc pairs. We also denote by $c_{\Der}$, resp. $c_\J$, the 
 natural central extension of $\la\oa,\od\ra$, resp. 
$\la\bar\GG_\J,\bar\G_\J\ra$, by $\la \k^\times, \k \ra$.

\begin{proposition}\label{jdm}
For any  $\la\oa,\od\ra$-torsor
$Z$, on $Y$, we have in $H^2(\Om_Y^{\geq1})$ 
\[
\alpha(Z, c_{\Der},\io_{\Der}) - 
\alpha(\bar\Phi_*Z, c_\J,\io_\J)= \half c_1(L_Z),
\]
where $L_Z$ is the $\k^\times$-torsor on $Y$ induced from $Z$ via
the composition of homomorphisms:
\[
\oa \to \Aut(\D)_\J \to \Sigma(P) \to \k^\times 
\]
where the last arrow is the character $\Sigma(p) \mapsto \det(p|_\y)$. 
\end{proposition}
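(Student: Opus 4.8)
The plan is to compare the two Picard algebroids on $Y$ descended from $\wt\g\o\oo_Z$ in the two cases, and to track the discrepancy introduced by the non-commutativity of diagram \eqref{non}. First I would fix a $\la\oa,\od\ra$-torsor $f: Z\to Y$ and form, on the total space $Z$, the two $\oa$-equivariant (resp. $\bar\GG_\J$-equivariant) Picard algebroids $\Der(\D,\mm)\o\oo_Z$ and $\G_\J\o\oo_{\bar\Phi_*Z}$. Since $\bar\Phi_*Z$ is obtained from $Z$ by transport of structure along the isomorphism $\la\bar\Phi_{\D,\mm},\bar\varphi_{\D,\mm}\ra$ of \hc pairs, there is a canonical identification of the underlying schemes, and the Lie algebroid $\bar\varphi_{\D,\mm}(\bar\G_\J)\o\oo_Z$ is carried isomorphically onto $\od\o\oo_Z$ as a sheaf with $G$-equivariant structure and anchor. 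The point is that the two \emph{splittings} coming from $\io_\J$ and $\io_{\Der}$ — i.e. the two partial splittings ${\mathrm d}\io$ used in the descent construction of \S\ref{triv} — do \emph{not} correspond under this identification; by Remark after \eqref{barphi}, the discrepancy is exactly the failure of \eqref{non} to commute, which by \eqref{error} is the scalar operator $\tfrac12\Tr(a|_\y)\cdot\id_\mm$ on the $\fp$-part.

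Next I would make this quantitative. The class $\alpha(Z,c,\io)$ is, by definition, the Atiyah class of the descended Picard algebroid $\wt\g_Y$, and by Remark \ref{io_rem} it depends only on the $G$-action on $\wt G$ and on the differential of $\io$. So the difference $\alpha(Z,c_{\Der},\io_{\Der}) - \alpha(\bar\Phi_*Z,c_\J,\io_\J)$ is the Atiyah class of the "difference" of the two partial splittings, i.e. of a $G$-equivariant map $\Lie G \to \k\o\oo_Z$ after projecting modulo the image of the anchor. Concretely, writing an element of $\fp = \Lie(\Sigma(P))\subset\Lie G$ as $a$, the two lifts into $\wt\g\o\oo_Z$ differ by $\half\Tr(a|_\y)\cdot 1_\mm$, which is a character of $\Lie(\Sigma(P))$ with values in $\k$, namely $\half$ times the differential of $p\mapsto\det(p|_\y)$. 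Under the dictionary between central extensions with a group splitting and pairs in Lemma \ref{bij}, twisting the partial Lie-algebra splitting of a Picard algebroid by a $G$-equivariant character $\chi:\Lie G\to\k$ changes its Atiyah class precisely by $c_1$ of the $\k^\times$-torsor associated to $Z$ via the group character integrating $\chi$ — here, via $\oa\to\Sigma(P)\xrightarrow{\det(\,\cdot\,|_\y)}\k^\times$, with the factor $\half$. That torsor is exactly $L_Z$, so the difference is $\half c_1(L_Z)$.

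To organize the argument cleanly I would phrase the "twist by a character" step as a short lemma: if $\wt\g\o\oo_Z$ is a $G$-equivariant Picard algebroid with partial splitting $s:\Lie G\to\wt\g\o\oo_Z$ of the anchor, and $\chi:\Lie G\to\k$ is $G$-equivariant, then replacing $s$ by $s + \chi\cdot 1$ (landing in the central $\oo_Z\subset\wt\g\o\oo_Z$) changes the descended Atiyah class on $Y$ by $c_1$ of the $\k^\times$-torsor $Z\times_{G}\k^\times$ formed along the character $\exp\chi$ (when $\chi$ integrates). This is immediate from the explicit descent description $\wt\g_Y = [f_*(\wt\g\o\oo_Z)]^G / \mathrm{im}\,[f_*(\Lie G\o\oo_Z)]^G$: changing the embedding of $[f_*(\Lie G\o\oo_Z)]^G$ by a central character-valued shift tensors the resulting Picard algebroid by the Atiyah algebra of the corresponding line bundle. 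Applying this with $\chi(a) = \half\Tr(a|_\y)$ on the $\fp$-summand and $0$ on $\G_\J^{\geq1}$ — which is legitimate since both decompositions \eqref{dec_J}, \eqref{dec_der} restrict to the common quotient $\la\Aut(\D)_\J,\Der(\D)_\J\ra$ and the \emph{group} character $\sqrt{\det(p|_\y)}$ appearing in the proof of Proposition \ref{Lie-extensions} integrates $\chi$ up to the sign ambiguity killed by $\{\pm1\}$ — yields the claimed identity.

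The main obstacle, I expect, is bookkeeping the sign/square-root ambiguity and making the "twist" lemma precise at the level of $G$-equivariant Picard algebroids and equivariant descent: one must check that the shift $a\mapsto \half\Tr(a|_\y)\cdot 1$ is genuinely $G$-equivariant (it is, since $\Tr(\mathrm{Ad}_p(a)|_\y) = \Tr(a|_\y)$ for $p\in P$, as conjugation by $P$ preserves $\y$) and that passing to the quotient by $\{\pm1\}$, as forced by Proposition \ref{Lie-extensions}, does not affect the Atiyah class — which is clear because $H^2(\Om^{\geq1}_Y)$ is a $\Q$-vector space and the $\{\pm1\}$-cover induces an isomorphism on de Rham-type cohomology. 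Everything else is the functoriality of $\alpha$ under morphisms of the relevant data, already recorded in Corollary \ref{functorial} and Remark \ref{io_rem}.
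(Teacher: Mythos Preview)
Your proposal is correct and follows essentially the same route as the paper's proof: both identify the two Picard algebroids $\Der(\D,\mm)\otimes\oo_Z$ and $\G_\J\otimes\oo_{\bar\Phi_*Z}$ via the isomorphism $\varphi_{\D,\mm}$, observe that the only discrepancy lies in the partial splittings $d\io_{\Der}$ versus $d\io_\J$, quantify that discrepancy as $\tfrac12\Tr(a|_\y)$ using \eqref{error}, and then translate the character-twist into $\tfrac12 c_1(L_Z)$ by equivariant descent. The one stylistic difference is in handling the factor $\tfrac12$: the paper first multiplies by $2$ (using that isomorphism classes of Picard algebroids form a vector space over a field of characteristic zero) so that the character $\Tr(a|_\y)$ integrates honestly to $\det(p|_\y)$ on $P$, whereas you keep the $\tfrac12$ and argue that the $\{\pm1\}$-ambiguity in $\sqrt{\det}$ is invisible in $H^2(\Om^{\geq1}_Y)$; both are fine, though the paper's order of operations is slightly cleaner since it avoids ever invoking a group character that is only defined on a double cover.
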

\begin{proof} 
In the notation of \ref{triv} we observe that for both central extensions
$c_\J$, $c_{\Der}$ the Lie algebra part $\wt\g$ can be identified 
with $\oh\J$. Although  this identification of Lie algebras 
does not extend to a 
homomorphism on the group parts, only on their quotients by the
image of $\{\pm 1\}$, for the construction of the $\alpha$ classes we
only need the adjoint action of $G$ on $\wt\g$, which does match
for the two extensions, and the differentials 
$d\io: \g \to \wt\g$ of the splitting maps $\io: G \to
\wt{G}$, which are  \textit{different} for $c_{\Der}$ and
$c_\J$, respectively. 

 In fact, identifying $\od$ with $\G_\J$ and applying 
\eqref{error} we see that $(d\io_{\J} - d \io_{\Der})$ is the
composition 
$$
 \Lie(\oa) \to \Lie(\Aut(\D)_\J) \to \fp \to \k
\subset \G_\J
$$
where $\fp \to \k$ is given by $\half\Tr(a|_\y)$. We will 
denote this composition also by $\half\Tr(a|_\y)$. 
Therefore we see that the classes $\alpha(Z, c_{\Der},\io_{\Der})$
and $\alpha(\bar\Phi_*Z, c_\J,\io_\J)$ arise from the 
same Lie algebroid $\G_\J \otimes \oo_Z$ but equipped with 
different $\oa$-equivariant structures. More precisely, the 
group action on the sheaf is the same in both cases but
the partial connection along the fibers of the projection $Z \to Y$ 
differs by the map $\half\Tr: \Lie(\oa)\to \k$ described above. 

Using the group structure on the set of isomorphism classes
of Lie algebroids, it suffices to check that $\half c_1(L_Z)$
is the class of the trivial Lie algebroid $\oo_Z \oplus 
\T_Z$ with the equivariant structure in which the 
canonical embedding 
$$
\Lie(\oa) \otimes \oo_Z \hookrightarrow
\od \otimes \oo_Z \simeq \T_Z \hookrightarrow 
\oo_Z \oplus \T_Z
$$
is adjusted by $\half \Tr \otimes \oo_Z$. Furthermore since the group
of isomorphism classes of Lie algebroids is a vector space 
over a field of characteristic zero, it suffices to show the 
statement without both factors $\half$. Then the trace 
map $\Tr: \Lie(\oa)\to \k$ integrates to the group 
homomorphism $\oa \to \k^\times$ described in the 
statement of the proposition. Applying equivariant descent 
with respect to the kernel $\mathcal{U}$ of 
$\oa \to \k^\times$ we reduce 
the statement to the assertion that $c_1(L_Z)$ is the 
class of the Atiyah algebra of $\mathcal{U}\backslash Z = L_Z$, which 
holds by definition of the first Chern class 
$c_z(L_Z)$ with values in  $H^2(Y, \Om^{\geq 1}_Y)$.
\end{proof}

\section{Torsors associated with a quantization}
\label{quant_sec}
\subsection{}\label{pp}

Let $X$ be a smooth symplectic variety and for any point
$x\in X$ let $\wh\oo_x$ denote the completion of the local
ring at $x$. A choice of a formal symplectic coordinate system
at $x$ is equivalent to a choice of a topological
$\k[[\hb]]$-algebra isomorphism 
$\eta: \wh\oo_x\iso \A$ of Poisson algebras.
Composing $\eta$ with an automorphism of $\A$ yields
another  isomorphism $\wh\oo_x\iso \A$.
This shows that  the  pairs $(x,\eta)$, as above, form the set of
(closed) points of a transitive Harish-Chandra $\langle \Aut(\ca A), \Der(\ca
A)\rangle$-torsor $\pp_X$, on ~$X$ (both automorphisms 
are derivations are assumed to preserve the Poisson structure 
on $\ca A$).

Next, let $\oo_\hb$ be a formal  quantization 
 of $\oo_X$ and let  $\mathcal{O}_{x,\hb}$
denote  the completion  of $\mathcal{O}_\hb $ at a point $x\in X$.
The algebra  $\mathcal{O}_{x,\hb}$ is
isomorphic to $\D$ as a topological $\k[[\hb]]$-algebra.
Furthermore, the  pairs $(x,\eta_\hb)$,
where $x\in X$ and $\eta_\hb: \D\iso \mathcal{O}_{x,\hb}$ is
an isomorphism of topological $\k[[\hb]]$-algebras, form the set of
(closed) points of a transitive Harish-Chandra $\langle \Aut(\D),
\Der(\D) \rangle$-torsor
$\pp_\hb$, on $X$.

The algebra projection $\D\onto\D/\hb\D=\A$
induces  a canonical projection
$\langle \Aut(\D), \Der(\D) \rangle\onto\langle \Aut(\ca A), \Der(\ca
A)\rangle$, of  Harish-Chandra pairs.
According to an observation of \cite{BK}, a choice of deformation quantization 
$\mathcal{O}_\hb$, of $\oo_X$, is equivalent
to a choice of lift of the  torsor $\pp_X$ over 
$\langle \Aut(\ca A), \Der(\ca A)\rangle$ to a transitive Harish-Chandra torsor
$\pp_\hb$ over $\langle \Aut(\D), \Der(\D) \rangle$ (we just choose an
identification of the completion $\mathcal{O}_{x,\hb}$, of $\mathcal{O}_\hb $ at $x$, with 
$\D$).

From now on, we fix a quantization $\oo_\hb$
and  an associated torsor $\pp_\hb$ as above.
Recall that the period map assigns to
the quantization $\oo_\hb$ a class
$\operatorname{per}(\oo_\hb)$,
of the form \eqref{per}. This class
is defined in terms of the torsor  $\pp_\hb$
as follows.

First, one introduces a proalgebraic group
$\ \dis\GG^+:=\k\times \big(Sp(\v)\ltimes\GG^{\geq1}\big)$,
which  is almost isomorphic to the group $\GG$
except that the multiplicative group $\k^\times_\GG$,
in the center of $\GG$,  is
replaced by a copy of the additive group $\k$. 
There is, in fact, a copy of the additive  group 
$\k[[\hb]]$ contained in the center of $\GG^+$.
The imbedding $\k[[\hb]]\into\GG^+$ is defined,
via the  natural group decomposition $\k[[\hb]]=\k\times
\hb\k[[\hb]]$, 
to be a 
 cartesian product of the  imbedding $\k\into\GG^+$, as the first factor,
and the composition $\hb\k[[\hb]]\iso 1+\hb\k[[\hb]]\into \GG^{\geq1}$,
where the first map is the exponential map
and the second map was discussed after formula \eqref{sisi}.
Further, using that the morphism $\la\Phi_\D,\varphi_\D\ra$
vanishes on the subpair $\la\k^\times_\GG,\k_\GG\ra\sset
\la\GG,\G\ra$ one constructs an
`additive counterpart' of   \eqref{aut}, a central extension
of \hc pairs of the form:
\beq{bk_ext}
1 \to \langle \k[[\hb]], \k[[\hb]] \rangle \too 
\langle \GG^+,  \oh\D \rangle \stackrel{\phi^+}\too 
\langle \Aut(\D), \Der(\D) \rangle \to 1,
\eeq
of \hc pairs.
Then, one defines $\operatorname{per}(\oo_\hb):=\Loc(\pp_\hb,
\GG^+,  \oh\D)$,
the obstruction class to lifting the torsor  $\pp_\hb$,
over $\langle \Aut(\D), \Der(\D) \rangle$,
to a transitive torsor over the \hc pair
$\langle \GG^+,  \oh\D \rangle$.

\subsection{}\label{pj}

Let $i_Y :Y \into X$ be a closed imbedding of a smooth Lagrangian subvariety
and $I_Y$ the ideal sheaf of $Y$.
Let $\J_Y$ be  the preimage of the ideal $I_Y$
under the natural projection $\oo_\hb\onto\oo_X$.
For $y\in Y$,  let $I_{Y,y}$, resp.
$\J_y$, denote the completion of $I_Y$, resp. $\J_Y$, at $y$.
It is clear that one can choose
a $\k[[\hb]]$-algebra  isomorphism $\eta: \mathcal{O}_{y,\hb}\cong \D$
in such a way that $\eta(\J_y)=\J$.
Moreover, the pairs  $(y,\eta)$ form the set of
(closed) points of a transitive   torsor 
$\pp_\J$ over the
\hc pair $\langle \Aut(\D)_{\ca J}, \Der(\D)_{\ca J} \rangle$
(for transitivity, observe that the bracket with $\frac{1}{\hb}{\ca J}$
preserves $\ca J$ and that the conormal bundle to $Y$ is 
identified with its tangent bundle, due to the Lagrangian condition).

Write $q^\times: K^\times\to K^\times/\k^\times$,
resp. $q^+: K\to K/\k$, for the natural projections. Let
\beq{K/k}
1\to \la K^\times/\k^\times, K/\k\ra \to \la\bar\GG_\J,\bar\G_\J\ra
\to \la\Aut(\D)_\J,\Der(\D)_\J\ra\to1
\eeq
be
a push-out of the  extension \eqref{aut}  via the morphism
$\la q,q^+\ra: \la K^\times, K\ra \to \la K^\times/\k^\times, K/\k\ra$.
Associated with the  central extension \eqref{K/k} one has
the obstruction class
$\Loc(\pp_\J, \bar\GG_\J,\bar\G_\J)$ 
for lifting the torsor $\pp_\J$ to a transitive torsor over
$\la\bar\GG_\J,\bar\G_\J\ra$.

\begin{lemma}\label{loc_j} 
In $H^2(Y)[[\hb]]$, we have
\[ \Loc(\pp_\J,
\bar\GG_\J,\bar\G_\J)=i_Y^*(\hb^2\,\om_2(\ohb)+\hb^3\,\om_3(\ohb)+\ldots))\red{.}\]

\end{lemma}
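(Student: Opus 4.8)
The statement to prove is Lemma~\ref{loc_j}, identifying $\Loc(\pp_\J, \bar\GG_\J,\bar\G_\J)$ with $i_Y^*$ of the `higher' part of the period, $\hb^2\om_2(\ohb)+\hb^3\om_3(\ohb)+\ldots$. The plan is to compare the obstruction class attached to $\pp_\J$ with the pullback of the obstruction class attached to $\pp_\hb$, exploiting the compatibility of all the relevant Harish-Chandra pairs and the functoriality of the $\Loc$ construction recorded in Corollary~\ref{functorial}.

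First I would set up the restriction morphism of torsors. The closed imbedding $i_Y: Y\into X$ together with the chosen identifications of completions gives a morphism of Harish-Chandra pairs $\la\Aut(\D)_\J,\Der(\D)_\J\ra\into\la\Aut(\D),\Der(\D)\ra$ (this is the subpair of maps preserving $\J$), and the torsor $\pp_\J$ on $Y$ is, by construction, a reduction of the structure group of $i_Y^*\pp_\hb$ along this inclusion; more precisely $i_Y^*\pp_\hb$ is obtained from $\pp_\J$ by extension of structure group. Next I would relate the two central extensions. The extension \eqref{bk_ext} defining $\per(\ohb)=\Loc(\pp_\hb,\GG^+,\oh\D)$ has kernel $\la\k[[\hb]],\k[[\hb]]\ra$; restricting \eqref{bk_ext} to the subpair $\la\Aut(\D)_\J,\Der(\D)_\J\ra$ and then pushing out along the quotient map $\k[[\hb]]\onto\k[[\hb]]/\k[[\hb]]\hb^0$ — i.e. killing the degree-zero ($\hb^0$) part of the center — I would identify the result with the extension \eqref{K/k} defining $\bar\GG_\J$. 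Here the key point is bookkeeping of the grading: the center $\k[[\hb]]$ of $\GG^+$ decomposes as $\k\cdot\hb^0\oplus\hb\k[[\hb]]$, and the isomorphism $\k[[\hb]]^\times\cong\k^\times\times(1+\hb\k[[\hb]])$ matches $\k^\times/\{\pm1\}$ (the part already handled by $\bar\GG_\J$ via the square-root twist, cf. Proposition~\ref{Lie-extensions}) with the $\hb^0$ summand, so that the pushed-out extension \eqref{K/k} records precisely the coefficients of $\hb^2,\hb^3,\ldots$ — the coefficient of $\hb^1$ being separately accounted for (it is the Atiyah class $At(\ohb,Y)$, cf. \eqref{compat}, and does not enter $\bar\G_\J$ because of the $\k_\J$ quotient). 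I must be careful about why $\hb^1$ drops out and $\hb^2,\hb^3,\ldots$ survive: this is the role of quotienting $\G_\J$ by $\k_\J=\oh\k$ versus quotienting $K^\times$ only by $\k^\times$.

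Then the proof concludes by applying Corollary~\ref{functorial}: part (i) says that pulling back the central extension along a morphism of Harish-Chandra pairs (here the inclusion $\la\Aut(\D)_\J,\Der(\D)_\J\ra\into\la\Aut(\D),\Der(\D)\ra$) leaves the $\Loc$ class unchanged under the corresponding reduction of the torsor, which gives $\Loc(\pp_\J, \text{restricted extension})=i_Y^*\Loc(\pp_\hb,\GG^+,\oh\D)=i_Y^*\per(\ohb)$ viewed in $K\o H^2_\dr(Y)$; part (ii) says that pushing out the extension along the linear projection $\k[[\hb]]\to\hb^2\k[[\hb]]$ (killing the $\hb^0$ and $\hb^1$ summands) applies the same projection to the $\Loc$ class. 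Composing these, $\Loc(\pp_\J,\bar\GG_\J,\bar\G_\J)$ equals the image of $i_Y^*\per(\ohb)=i_Y^*[\om]+\hb\,i_Y^*\om_1(\ohb)+\hb^2 i_Y^*\om_2(\ohb)+\cdots$ under the projection that discards the $\hb^0$ and $\hb^1$ terms, which is exactly $i_Y^*(\hb^2\om_2(\ohb)+\hb^3\om_3(\ohb)+\cdots)$ as claimed. I expect the main obstacle to be the bookkeeping in the middle step: precisely matching the filtration/grading pieces of the two central extensions (in particular, verifying that \eqref{K/k} really is the restriction-then-pushout of \eqref{bk_ext}, and getting the $\hb^0$ versus $\hb^1$ truncation exactly right given that $\bar\GG_\J$ involves the $\{\pm1\}$-quotient and the square-root determinant twist of Proposition~\ref{Lie-extensions}); once that identification is pinned down, the conclusion is a formal consequence of Corollary~\ref{functorial}.
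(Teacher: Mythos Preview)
Your strategy is the same as the paper's: realize $i_Y^*\pp_\hb$ as induced from $\pp_\J$ along the inclusion $\la\Aut(\D)_\J,\Der(\D)_\J\ra\hookrightarrow\la\Aut(\D),\Der(\D)\ra$, apply Corollary~\ref{functorial}(i) to the pulled-back extension \eqref{bk_ext} to obtain $i_Y^*\per(\ohb)$, then push out along the quotient of the center and apply Corollary~\ref{functorial}(ii), and finally identify the resulting extension with \eqref{K/k} (the paper does this via the exponential isomorphism $\exp\colon K/\k\iso K^\times/\k^\times$ on the group side, together with the group isomorphism $\GG_\J^+/\k\cong\Sigma(P)\ltimes\GG_\J^{\geq1}\cong\bar\GG_\J$).

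Two places where your write-up drifts from the paper. First, Proposition~\ref{Lie-extensions}, the $\{\pm1\}$-quotient, and the square-root determinant twist are irrelevant to this lemma; they only enter later when comparing with $\la\Aut(\D,\mm),\Der(\D,\mm)\ra$, so drop them here. Second, the paper separates the disappearance of the two low-order terms: the $\hb^0$ term $i_Y^*[\om]$ vanishes because $Y$ is Lagrangian (you never invoke this), while the $\hb^1$ term $i_Y^*(\hb\,\om_1)$ is killed by the push-out $\la q^+,q^+\ra\colon\la K,K\ra\to\la K/\k,K/\k\ra$. You instead try to project onto $\hb^2\k[[\hb]]$ in one stroke, but the center of \eqref{K/k} is $K/\k$, not $K/(\k\oplus\k\hb)$, so that does not match. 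Your instinct that the $\hb^1$ term disappears ``because of the $\k_\J$ quotient'' is in the right direction --- but note $\k_\J=\k_\G=\eps_\G(\k)\subset\G^0$, not $\oh\k$; it is precisely this quotient (built into the passage from $\G_\J=\oh\J$ to $\bar\G_\J$) that the paper is invoking when it asserts the pushed-out extension is isomorphic to \eqref{K/k}.
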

\begin{proof} First of all, it is immediate to see
that the $\la\Aut(\D),\Der(\D)\ra$-torsor
$i_Y^*\pp_\hb$, the restriction of the torsor $\pp_\hb$
to the subvariety $Y$, is induced from the torsor
$\pp_\J$ via the imbedding
$\la\Aut(\D)_\J,\Der(\D)_\J\ra\into \la\Aut(\D),\Der(\D)\ra$.

Let $c$ be  a pull-back of
the central extension \eqref{bk_ext} with respect to this embedding
and let
$\la\GG_\J^+,\oh\D_\J\ra$ be the
preimage of the \hc pair $\la\Aut(\D)_\J,$ $\Der(\D)_\J\ra$
under the morphism $\phi^+$ in \eqref{bk_ext}.
Applying Corollary \ref{functorial}(i) 
we deduce that the  obstruction 
for lifting the torsor $\pp_\J$ to a 
$\la\GG_\J^+,\oh\D_\J\ra$-torsor
is equal to $i_Y^*([\om]+\hb\,\om_1+\hb^2\,\om_2+\hb^3\,\om_3+\ldots)$.
The variety $Y$ being Lagrangian, we have $i_Y^*([\om])=0$.

Note that the Lie
algebra $\oh\D_\J$ breaks up into a direct sum
$\oh\k\oplus \oh\J$.
Let
\[1\to \la K/\k, K/\k\ra \to \la\GG_\J^+/\k,\oh\J\ra
\to \la\Aut(\D)_\J,\Der(\D)_\J\ra\to1,
\]
be the push-out of the resulting  extension   via the morphism
$\la q^+,q^+\ra: \la K, K\ra$ $\to \la K/\k, K/\k\ra$.
Applying part (ii) of  Corollary  \ref{functorial}
we conclude  that the  obstruction
for lifting the torsor $\pp_\J$ to a torsor over
$\la\GG_\J^+/\k,\oh\J\ra$
is equal to $i_Y^*(\hb^2\,\om_2+\hb^3\,\om_3+\ldots)$
(i.e. the term $i_Y^*(\hb\,\om_1)$ disappears when we
take the quotient by $\la \k, \k \ra$).

On the other hand, one has natural isomorphisms
$\GG_\J^+/\k\cong \Sigma(P)\ltimes \GG_\J^{\geq1}\cong
\GG_\J/\k_\GG^\times$, cf. Claim \ref{1semidirect}.
Furthermore, it is straightforward to see by
comparing the constructions that the central
extension in the displayed formula above
is isomorphic to the one in \eqref{K/k};
the isomorphism being induced by
the exponential map $\exp: K/\k\iso  K^\times/\k^\times$.
The isomorphism of extensions
implies an equality of the corresponding
obstruction classes, and the result follows
from the conclusion of the preceeding paragraph.
\end{proof}

\subsection{}\label{J-sec} In this subsection, we are going to 
associate to the quantization $\oo_\hb$ and the Lagrangian subvariety
$Y\sset X$
a Picard algebroid of the form 
\eqref{pic_J}.
%0\to \oo_Y\to\J_Y/\J_Y^2\to\T_Y\to0
%\eeq

To this  end,
let $\J'_Y$
be  the preimage of the ideal  $I^2_Y$
under the  projection $\oo_\hb\onto\oo_X$,
and write $\J_Y^2:=(\J_Y)^2$.
It is clear that one has inclusions $\J_Y^2\sset\J'_Y\sset\J_Y$.

\begin{lemma}\label{can}
There are canonical isomorphisms
\[\J_Y/\J_Y'\cong\T_Y,\quad
\J_Y'/\J_Y^2\cong\oo_Y,\quad\text{and}\quad
\J_Y/\J_Y^2\cong\Tor_1^{\oo_\hb}(\oo_Y,\oo_Y).
\]
\end{lemma}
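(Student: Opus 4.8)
The plan is to prove the three isomorphisms of Lemma~\ref{can} by reducing everything to the local model described in Section~\ref{sec2}, where $\oo_\hb$ is replaced by $\D$, the module $\oo_Y$ by $\mm=\D/\D\y$, and the ideal $\J_Y$ by the two-sided ideal $\J\sset\D$. Since all the sheaves in question are constructed functorially from $\oo_\hb$, $I_Y$, and $I_Y^2$, and since $\pp_\J$ is a transitive $\la\Aut(\D)_\J,\Der(\D)_\J\ra$-torsor over $Y$ (see Section~\ref{pj}), it suffices to exhibit the isomorphisms equivariantly in the local picture; the identifications $\J_y\cong\J$, $I_{Y,y}\cong \A\y$ at a point $y\in Y$ then glue. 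First I would record the local descriptions: $\J=\A\y+\hb\D$ as a subspace, $\J'$ (the preimage of $I_Y^2=\A\y^2$) $=\A\y^2+\hb\D$, and $\J^2=(\A\y+\hb\D)^2$; one must check $\hb\D\sset\J^2$, which follows from $\hb\in[\v,\v]\sset \J^2$ using the commutation relations \eqref{xy}, hence $\J^2=\A\y^2+\hb\J=\A\y^2+\hb\A\y+\hb^2\D$.

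For the first isomorphism $\J_Y/\J_Y'\cong\T_Y$: modulo $\hb$ this is just $I_Y/I_Y^2\cong N^*_{Y/X}$, the conormal bundle, and the Lagrangian condition gives the canonical identification $N^*_{Y/X}\cong\T_Y$ via the symplectic form. Locally $\J/\J'=(\A\y+\hb\D)/(\A\y^2+\hb\D)\cong \y/\A\y^2\cap\y$, which is the span of $y_1,\dots,y_n$ over $\A/\A\y\cong\k[[x_1,\dots,x_n]]$, and the $P$-action (through $\Aut(\D)_\J\to\tau(\cdot)\in P$, cf. Claim~\ref{1semidirect}) on this quotient is the standard action on $\y\cong\T_Y$, matching the equivariant structure that defines $\T_Y$ from $\pp_\J$. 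For the second, $\J'_Y/\J_Y^2$: locally $(\A\y^2+\hb\D)/(\A\y^2+\hb\J)=\hb\D/(\hb\J+\A\y^2\cap\hb\D)\cong \hb\D/\hb\J\cong \D/\J\cong\A/\A\y\cong\oo_Y$ — the key point being that $\J'=\A\y^2+\hb\D$ reduces mod $\J^2=\A\y^2+\hb\J$ to the one-dimensional-over-$\oo_Y$ quotient generated by $\hb$, on which $P$ acts through the character $\det(p|_\y)^{-1}$ or trivially; I would check the weight is such that the claimed $\oo_Y$ (with trivial twist) is correct, using that $\hb$ itself is $\Aut(\D)$-invariant, so $\J'/\J^2\cong\oo_Y$ canonically.

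The third isomorphism $\J_Y/\J_Y^2\cong\Tor_1^{\oo_\hb}(\oo_Y,\oo_Y)$ is the one carrying the real content. The plan is to use the standard bar-type presentation of $\Tor_1$: for a left module computed against $\oo_Y=\oo_\hb/\J_Y$ viewed as a right and left module, one has $\Tor_1^{\oo_\hb}(\oo_\hb/\J_Y,\,\oo_\hb/\J_Y)\cong \J_Y/\J_Y^2$ — this is the same mechanism as the classical isomorphism $\Tor_1^R(R/I,R/I)\cong I/I^2$ valid for any two-sided ideal $I$, which follows from tensoring the short exact sequence $0\to\J_Y\to\oo_\hb\to\oo_Y\to0$ with $\oo_Y$ and identifying $\J_Y\otimes_{\oo_\hb}\oo_Y=\J_Y/\J_Y\J_Y$; the only subtlety in the noncommutative setting is that $\J_Y\cdot\J_Y$ computed inside the algebra equals $\J_Y^2$ and that $\Tor_1$ vanishes from the flatness of $\oo_\hb$ over nothing relevant here — rather, one uses that $\oo_\hb$ is free of rank one over itself. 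I expect this third part to be the main obstacle, specifically making the $\oo_\hb$-bimodule bookkeeping clean (left versus right module structures on $\oo_Y$, and that $I_Y$ being only a left ideal after reduction is harmless because $\J_Y$ is genuinely two-sided), and then checking the two constructions of the extension \eqref{pic_J} agree: the Atiyah-algebra extension $0\to\J'_Y/\J_Y^2\to\J_Y/\J_Y^2\to\J_Y/\J'_Y\to0$ obtained by combining the first two isomorphisms, versus the one coming intrinsically from $\Tor_1$. I would verify the middle terms match by a diagram chase comparing the two filtrations of $\J_Y$ and noting both extensions are $\Aut(\D)_\J$-equivariant, hence determined by the local data, where the check is the explicit computation already implicit in \eqref{commutation} and Lemma~\ref{1m}.
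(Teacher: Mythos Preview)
Your overall strategy coincides with the paper's: the first isomorphism via $\J_Y/\J_Y'\cong I_Y/I_Y^2\cong N^*_{Y/X}\cong\T_Y$ using the symplectic form, the third via the long exact $\Tor$ sequence attached to $0\to\J_Y\to\oo_\hb\to\oo_Y\to0$ (giving $\Tor_1^{\oo_\hb}(\oo_Y,\oo_Y)\cong\oo_Y\otimes_{\oo_\hb}\J_Y=\J_Y/\J_Y^2$), and the second by a local computation identifying $\J'_Y/\J_Y^2$ with the line spanned by~$\hb$. The torsor-and-equivariance packaging you use to globalize is more elaborate than what the paper does---it works directly with the sheaves and only invokes local coordinates for a single inclusion---but this is a matter of style.

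There is, however, a genuine error in your local computation for the second isomorphism. You claim $\hb\D\sset\J^2$, arguing that ``$\hb\in[\v,\v]\sset\J^2$''. This is false: $\hb=[y_1,x_1]$, but $x_1\notin\J$, so there is no reason for this commutator to lie in $\J\cdot\J$. In fact $\hb\notin\J^2$: with respect to the grading of $\D$, one has $\J\cap D^0=0$ and $\J\cap D^1=\y$, so the degree-$2$ component of $\J^2$ is $\y\cdot\y=S^2\y\sset S^2\v\sset D^2$, which does not contain~$\hb$. Were your claim $\hb\D\sset\J^2$ true, it would force $\J^2=\J'$ and hence $\J'_Y/\J_Y^2=0$, contradicting the lemma; so your ``hence $\J^2=\A\y^2+\hb\J$'' is a non sequitur (and in fact inconsistent with the premise you just asserted). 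The correct local statement---and this is exactly what the paper isolates---is $\J_Y^2\cap\hb\oo_\hb=\hb\J_Y$: only $\hb\J$, not all of $\hb\D$, lies in $\J^2$. From this one gets $\J'_Y/\J_Y^2\cong\hb\oo_\hb/(\J_Y^2\cap\hb\oo_\hb)=\hb\oo_\hb/\hb\J_Y\cong\oo_\hb/\J_Y\cong\oo_Y$, which is the argument you should substitute in.
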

\begin{proof} 
By definition, the projection
  $\pr: \oo_\hb\onto\oo_X$ induces an isomorphism
$\J_Y/\J_Y'\iso I_Y/I_Y^2$. Further, the symplectic form on $X$ 
provides an isomorphism between  $I_Y/I_Y^2$, the conormal sheaf 
to $Y$,
and the tangent sheaf $\T_Y$. The first isomorphism
of the lemma follows.

To prove the second isomorphism, note that we have
$\pr(\J_Y^2)=I_Y^2=\pr(\J_Y')$.
It follows that the natural imbedding
$\hb \oo_\hb\into\J_Y'$ induces an isomorphism
$\hb \oo_\hb/(\J_Y^2\cap \hb\oo_\hb)\iso \J_Y'/\J_Y$.
Clearly, one has an inclusion
$\hb\J_Y\sset \J_Y^2\cap \hb\oo_\hb$. Furthermore,
for any  $y\in Y$, an explicit computation
in local coordinates shows that the inclusion
$\hb\J_{Y,y}\into\J^2_{Y,y}\cap \hb\oo_{\hb,y}$
is, in fact, an equality.
It follows that $\J_Y^2\cap \hb\oo_\hb=\hb\J_Y$.
Thus, we deduce a chain of isomorphisms
\begin{multline*}
\J_Y'/\J_Y\ \cong\ \hb \oo_\hb/(\J_Y^2\cap \hb\oo_\hb)\ \cong\ 
\hb\oo_\hb/\hb\J_Y\\ 
\cong\ \hb(\oo_\hb/\J_Y)\ \cong\ \hb(\oo_X/I_Y)\ \cong\ \hb\oo_Y.
\end{multline*}
The second isomorphism of the lemma follows.

To prove the third isomorphism we use
the long exact sequence of $\Tor$-sheaves associated with the
short exact sequence $\J_Y\into\oo_\hb\onto\oo_Y$.
A final part of that long exact sequence reads
{\begin{multline*}
\ldots\too \Tor^{\oo_\hb}_1(\oo_Y,\oo_\hb)\too
\Tor^{\oo_\hb}_1(\oo_Y,\oo_Y)\stackrel{a}\too
\oo_Y\bo_{\oo_\hb}\J_Y\\
\too\oo_Y\bo_{\oo_\hb}\oo_\hb\stackrel{b}\too
\oo_Y\bo_{\oo_\hb}\oo_Y\too0.
\end{multline*}}
We have $\Tor^{\oo_\hb}_1(\oo_Y,\oo_\hb)=0$
and the map $b$ above is, essentially,
the identity map $\oo_Y\to\oo_Y$.
It follows from the exact sequence that the map $a$ is an isomorphism.
It remains to note that one has isomorphisms
$\oo_Y\bo_{\oo_\hb}\J_Y=(\oo_\hb/\J_Y)\bo_{\oo_\hb}\J_Y=
\J_Y/\J_Y^2$.
\end{proof}

It is easy to see using $\{I_Y, I_Y\} \subset I_Y$ that the bracket 
$\oo_\hb\times \oo_\hb\to \oo_\hb,\ a\times b \mto \oh(ab-ba)$
descends to
 a well-defined Lie bracket on
$\J_Y/\J_Y^2$, resp.  $\J_Y'/\J_Y^2$ and $\J_Y/\J_Y'$.
Furthermore, the bracket on $\J_Y/\J_Y'$ goes, via the
first isomorphism of Lemma \ref{can},
to the commutator of vector fields.

Now, there is an obvious short exact sequence
\beq{JJ}
0\to\J_Y'/\J_Y^2\to\J_Y/\J_Y^2\to\J_Y/\J_Y'\to0.
\eeq
All the maps in this sequence respect the brackets
and
the image of the element
$\hb\in\J_Y'/\J_Y^2$ is contained in the center of
the Lie algebra $\J_Y/\J_Y^2$.
Thus, we see from  Lemma \ref{can} that our
short exact sequence takes the form of
the extension in \eqref{pic_J}.

\begin{rem}  It is not difficult to see that
the extension \eqref{pic_J} may be interpreted in a natural way as
a  short exact sequence of the form
$$0\to \oo_Y\o_{\oo_\hb}\Tor^{\oo_\hb}_1(\oo_X,\oo_X)\to
\Tor^{\oo_\hb}_1(\oo_Y,\oo_Y)\to
\Tor^{\oo_X}_1(\oo_Y,\oo_Y)\to0,
$$
where the tensor product in the first term involves
the $\oo_\hb$-module structure on $\Tor^{\oo_\hb}_1(\oo_X,-)$ induced
by the $\oo_\hb$-action  on $\oo_X=\oo_\hb/\hb\oo_\hb$
on the left.
The above exact sequence is 
a noncommutative version of the Jacobi-Zariski sequence,
cf. \cite[\S 3.5.5]{Lo}, associated with the 
 algebra homomorphisms $\oo_\hb\onto\oo_X\onto\oo_Y$.
\erem

We let
$At(\oo_\hb,Y)\in H^2(\Om_Y^{\geq 1})$
be the Atiyah class of the extension  ~\eqref{pic_J},
equivalently, of  the extension  ~\eqref{JJ}.

\begin{rem}
There is an alternative construction of the class
$At(\oo_\hb,Y)$ in terms of Cech cocycles as follows.

Locally in the Zariski topology we can write 
$\mathcal{O}_\hb /\hb^3 \mathcal{O}_\hb $ as $\mathcal{O}_X + 
\hb \mathcal{O}_X + \hb^2 \mathcal{O}_X$ and 
$\J_Y = I_Y + \hb \mathcal{O}_X + \hb^2 \mathcal{O}_X$, 
$\J'_Y = I^2_Y + \hb \mathcal{O}_X + \hb^2 \mathcal{O}_X$.
On an open subset $X_i$ 
the truncated product looks like
$$
a * b = ab + \hb \alpha_1^i (a, b) + \hb^2 \alpha_2^i(a, b)
$$
while on double intersections the two direct sum splittings are related by the 
$\k[[\hb]]$ linear map 
$$
a \mapsto a + \hb \beta_1^{ij}(a) + \hb^2 \beta_2^{ij}(a).
$$
It follows from the standard associativity equations on the $*$ product that
antisymmetrizing $\alpha_2^i$, then choosing $a, b$ only from the ideal of
functions vanishing on $Y$, and finally restricting to $Y$ we obtain, due
to $N^* \simeq \T_Y$, a closed 2-form $\eta^i$ on $Y_i = X_i \cap Y$.
On the double intersections, since the transition functions $\beta_1^{ij}$ agree
with products and since by assumption $\alpha_1^i (a, b) = \frac{1}{2}P(da, db)$, 
we can conclude that each $\beta_1^{ij}$ is a derivation, i.e. induced by a vector
field on $X_i$. Projecting its restriction to $Y$ on the normal bundle $N$ and
then using $N \simeq \Omega^1_Y$ we obtain 1-forms $\xi^{ij}$ on 
$Y_i \cap Y_j$, such that $\eta^i|_{Y_i \cap Y_j}  - \eta^j|_{Y_i \cap Y_j} = 
d \xi^{ij}$. Then the collection $\eta^i, \xi^{ij}$ defines a class in $H^2$ of the
truncated de Rham complex (i.e. the 1-forms $\xi^{ij}$ satisfy the cocycle condition
on triple intersections, rather than up to a differential of function, since the 
cocycle condition holds for the original vector fields $\beta^{ij}$). 
\erem
\medskip

We now consider the setting of \S\ref{triv} in a special case
where 
 $\la G, \mathfrak{g} \ra = \la \Aut(\D)_\J,  
\Der(\D)_\J \ra$ 
and $\la \wt{G}, \wt{\mathfrak{g}} \ra  = 
\la \GG_\J/(1 +  \hb K) , \G_\J/\hb K  \ra$. 
We have a natural extension $\wt{c}$
as in \eqref{triv2}, and also a section $\wt{\io}: G \to \wt{G}$
that comes from the direct product decomposition \eqref{dec_J}.
Thus, the construction of  \S\ref{triv}
associates to this data  a class
$\alpha\big(\pp_\J, \wt{c},\wt{\io}\big)\in
H^2(\Om_Y^{\geq1})$.
\begin{lemma}\label{alat}
\vi In $H^2(\Om_Y^{\geq1})$, one has an equality 
$\alpha\big(\pp_\J, \wt{c},\wt{\io}\big)=
At(\oo_\hb,Y).$
\vii The canonical morphism $H^2(\Om_Y^{\geq1})\to H^2_\dr(Y)$
sends $At(\oo_\hb,Y)$ to $i^*_Y(\om_1(\oo_\hb))$.
\end{lemma}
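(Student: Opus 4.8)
The plan is to prove part \vi by comparing the two $\oo_Z$-linear extensions that define the respective Atiyah classes, and then deduce part \vii by passing to de Rham cohomology and invoking the period map. For part \vi, recall that the class $\alpha(\pp_\J,\wt c,\wt\io)$ is, by construction in \S\ref{triv}, the Atiyah class of the Picard algebroid obtained from the extension $0\to\oo_{\pp_\J}\to(\G_\J/\hb K)\o\oo_{\pp_\J}\to\T_{\pp_\J}\to0$ by equivariant descent along $\pp_\J\to Y$, using the splitting $\wt\io$ from \eqref{dec_J}. On the other hand, $At(\oo_\hb,Y)$ is the Atiyah class of the extension \eqref{JJ}, equivalently \eqref{pic_J}. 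So the heart of the argument is to identify these two Picard algebroids on $Y$. The natural bridge is the observation (already used implicitly for $\pp_\J$ in \S\ref{pj}) that for a point $(y,\eta)$ of $\pp_\J$ the isomorphism $\eta:\oo_{y,\hb}\iso\D$ carries $\J_y$ to $\J$, hence carries $\J_y/\J_y^2$ to $\J/\J^2$, $\J_y'/\J_y^2$ to $\J'/\J^2$, and $\J_y/\J_y'$ to $\J/\J'$. First I would check that under $a\mapsto\tfrac1\hb a$ the Lie algebra $\G_\J=\tfrac1\hb\J$ (Lemma \ref{jlem}) is identified, after quotienting by the central $\tfrac1\hb\hb K=K$, with $\J/\J^2$ equipped with the bracket $\tfrac1\hb[-,-]$ — the point being that $[\J,\J]\subset\hb\J$ forces all higher filtration pieces to collapse, so $\G_\J/\hb K$ is exactly the two-step object $\J/\J^2$, sitting in the extension $K\to\J/\J^2\to\J/(\J'+\hb\D)=\v$. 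Comparing with Lemma \ref{can}, which identifies $\J_Y/\J_Y^2$ with $\Tor_1^{\oo_\hb}(\oo_Y,\oo_Y)$ and $\J_Y/\J_Y'$ with $\T_Y$, one sees that \eqref{pic_J} is precisely the $Y$-level incarnation of the $\G_\J$-extension transported along $\pp_\J$.

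The technical content is then to verify that the two equivariant structures match — equivalently, that the partial connection along the fibers of $\pp_\J\to Y$ coming from the $\G_\J$-action agrees, under the above identifications, with the partial connection implicit in the sheaf-theoretic sequence \eqref{JJ}. Here I would argue locally: a local trivialization of $\pp_\J$ over an open $U\subset Y$ amounts to a choice of $\k[[\hb]]$-algebra isomorphism $\oo_\hb|_U\cong\D\wh\o\oo_U$ compatible with the filtration by powers of $\J_Y$, and under such a trivialization both the $\wt\io$-splitting and the descent of \eqref{JJ} are computed from the same multiplication data on $\oo_\hb$. This is where the Čech-cocycle description in the Remark preceding the lemma does the real work: the cochains $\alpha_2^i$ (antisymmetrized and restricted to $Y$) and the transition derivations $\beta_1^{ij}$ that represent $At(\oo_\hb,Y)$ are manifestly the same data that, in the Beilinson–Drinfeld formalism of \S\ref{triv}, produce the Atiyah class of the descended Picard algebroid $\wt\g_Y$ attached to $\pp_\J$. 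So I would phrase the proof of \vi as: (a) identify the underlying $\oo_Y$-extensions via Lemma \ref{can} and Lemma \ref{jlem}; (b) identify the Lie algebroid/Picard structures by checking they agree on cocycle representatives, reading off both from the same local presentation of $\oo_\hb/\hb^3\oo_\hb$.

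For part \vii, the plan is to chase the class $At(\oo_\hb,Y)=\alpha(\pp_\J,\wt c,\wt\io)$ through the natural map $H^2(\Om_Y^{\ge1})\to H^2_\dr(Y)$. By the compatibility between the $\alpha$-construction and the $\Loc$-construction recorded in Lemma \ref{bij} (and the functoriality of \cite[Prop.~2.7]{BK}), the de Rham image of $\alpha(\pp_\J,\wt c,\wt\io)$ is, up to a coboundary, the obstruction class $\Loc$ for lifting $\pp_\J$ along the relevant central extension — but now with the central $\k^\times$ (resp.\ $\k$) retained rather than the full $K^\times$. Comparing with Lemma \ref{loc_j}, which computes $\Loc(\pp_\J,\bar\GG_\J,\bar\G_\J)=i_Y^*(\hb^2\om_2+\hb^3\om_3+\cdots)$, and tracking which graded piece of $\per(\oo_\hb)=[\om]+\hb\om_1+\hb^2\om_2+\cdots$ survives: the $[\om]$-term dies because $i_Y^*[\om]=0$ ($Y$ Lagrangian), the higher terms $i_Y^*(\hb^2\om_2+\cdots)$ are exactly absorbed in $\Loc(\pp_\J,\bar\GG_\J,\bar\G_\J)$ when one quotients by the full $\hb K$, and what is left in passing to the single-step quotient by $\la\k,\k\ra$ is precisely the $\hb$-linear coefficient, giving $i_Y^*(\om_1(\oo_\hb))$. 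Concretely I would set up the two pushout extensions — one by $\la\k^\times,\k\ra$ (giving $\alpha$, part \vi) and one by $\la K^\times/\k^\times,K/\k\ra$ (giving $\Loc$, Lemma \ref{loc_j}) — fit them into a map of extensions with quotient the $\hb$-degree-one piece, and apply Corollary \ref{functorial}\vii to the induced linear map on the central terms to read off the coefficient.

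The main obstacle I expect is step (b) of part \vi: pinning down that the \emph{equivariant} (Picard-algebroid) structures coincide, not merely the underlying extensions of $\oo_Y$-modules. The subtlety is the noncommutativity of diagram \eqref{non} — the $\tfrac12\Tr(a|_\y)$ discrepancy of \eqref{error} — which means one must be careful about which splitting ($\io_\J$ versus $\io_{\Der}$) is being used and whether the half-trace correction intervenes. Here, though, the relevant pair is $\la\Aut(\D)_\J,\Der(\D)_\J\ra$ with $\wt G=\GG_\J/(1+\hb K)$ and the splitting $\wt\io$ from \eqref{dec_J}, so the half-trace issue of Proposition \ref{jdm} does \emph{not} enter the statement of Lemma \ref{alat}(i); one only needs that $\wt\io$ is the direct-product splitting and that the $\G_\J$-action on $\J_Y/\J_Y^2\cong\Tor_1^{\oo_\hb}(\oo_Y,\oo_Y)$ is literally the bracket action. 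Verifying that last point carefully — that the abstract $\G_\J$-module $\G_\J/\hb K$ transported along $\pp_\J$ is canonically \eqref{pic_J}, with matching anchor and bracket — is the crux, and the cleanest route is the explicit local/Čech computation sketched in the Remark.
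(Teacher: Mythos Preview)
Your overall plan for part \vi --- identify the descended Picard algebroid with $\J_Y/\J_Y^2$ --- is exactly the paper's strategy. But your explicit identification is wrong: you claim that $\G_\J/\hb K$ is ``exactly the two-step object $\J/\J^2$,'' and this is false. The Lie algebra $\wt\g=\G_\J/\hb K=\oh\J/\hb K$ is much larger than $\J/\J^2$; for instance $\oh y_1^2y_2$ is nonzero in the former but $y_1^2y_2\in\J^2$ is zero in the latter. What equivariant descent actually produces as the fiber of $\wt\g_Y$ over $y$ is not $\wt\g$ itself but the quotient $\wt\g/d\wt\io(\Lie G)$ (see the description in \S\ref{triv}: $\wt\g_Y$ is $[f_*(\wt\g\o\oo_Z)]^G$ modulo the image of $[f_*(\Lie G\o\oo_Z)]^G$). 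The paper makes precisely this point: it identifies $\J/\J^2$ with $(\oh\J\mmod\hb K)/d\wt\io(\Lie(\Aut(\D)_\J))$, and does so by \emph{pulling back} the sheaf $\J_Y/\J_Y^2$ to $\pp_\J$ rather than descending $\wt\g\o\oo_{\pp_\J}$, which sidesteps the \v Cech computation you propose. So your step (a) needs to be replaced by this quotient identification; once you do, the matching of equivariant structures is immediate from the definition of $\pp_\J$ (every closed point is an isomorphism $\eta$ carrying $\J_y$ to $\J$), and there is no need to invoke the cocycle Remark.

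For part \vii, your route through Lemma~\ref{loc_j} and the period decomposition is a reasonable alternative, but there is a gap: you assert that ``the de~Rham image of $\alpha(\pp_\J,\wt c,\wt\io)$ is the obstruction class $\Loc$'' and cite Lemma~\ref{bij} for this. Lemma~\ref{bij} does not say that. It characterizes lifts in terms of the equation $c_1(L)=\alpha$, but says nothing about the map $H^2(\Om_Y^{\geq1})\to H^2_\dr(Y)$ or about $\Loc$. The statement you need --- that for a central extension with a group-level splitting the Atiyah class of the descended Picard algebroid maps to the $\Loc$-class of the associated additive extension --- is precisely what the paper \emph{proves} in its argument for \vii, by comparing the flat-bundle extension coming from \eqref{ext_a} with the enveloping-algebra sequence $\oo_Y\to U_+(\mathcal L)/\oo_Y\cdot U_+(\mathcal L)\to\D_Y\to\oo_Y$ that represents the de~Rham image of an Atiyah class. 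Without that comparison (or an equivalent argument) your reduction to Lemma~\ref{loc_j} and Corollary~\ref{functorial} does not go through.
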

\begin{proof} By definition, the class $\alpha\big(\pp_\J, \wt{c},\wt{\io}\big)$ is the class of 
the equivariant descent of the Lie algebroid 
$\wt{\mathfrak{g}} \otimes \oo_{\pp_\J}$ and we need
to identify this  with the  Atiyah algebra $\J_Y/\J_Y^2$
(as Lie algebroids on $Y$). Instead, we 
can pull back the latter algebra to $\pp_\J$ and identify the 
pullback with the quotient of 
$\wt{\mathfrak{g}} \otimes \oo_{\pp_\J}$ by 
$d\wt{\io}(\Lie(G)) \otimes \oo_{\pp_\J}$. 
But by definition of $\pp_\J$ at every (closed) point of this
torsor the completion of $\J_Y$ is identified with $\J$ and
$$
\J/\J^2
\simeq (\oh \J \mod \hb K) / d\wt{\io} (\Lie(\Aut(\D)_\J)),
$$ 
as required. This finishes the proof of 
\vi.

For \vii recall that the class in $H^2_\dr(Y)$ is represented by 
a sequence of flat bundles on $Y$ induced by $\pp_\J$ from 
the sequence \eqref{ext_a} with $\mathfrak{a}=\k$. On the other 
hand, for any Atiyah algebra $\mathcal{O}_Y \to \mathcal{L}
\to \T_Y$ the image of its class in $H^2_\dr(Y)$ is represented 
by the  extension
$$
0\to\oo_Y \to U_+ (\mathcal{L})/\oo_Y \cdot U_+(\mathcal{L})
\to \D_Y \to \oo_Y\to0
$$
where $\D_Y$ is the sheaf of algebraic differential operators on $Y$ and
the last arrow sends an operator to its value on the constant function 
$1$.  If the Atiyah algebra in question is the equivariant 
descent of $\wt{\g} \otimes \oo_{\pp_\J}$ then the above long 
extension is obtained from a sequence very similar to 
\eqref{ext_a}, except that the two middle terms are replaced by their
quotients by the ideal generated by the image of $\Lie(G)$. But it is
easy to check that these two ideals are isomorphic, hence the class in 
$\Ext^2$ is the same, as required. 
\end{proof}

\section{Proof of the main result}
\subsection{}
We keep the notation of the previous section.
In particular, we have the obstruction class 
 $\per(\oo_\hb)=[\om]+\hb\,\om_1(\oo_\hb)+\hb^2\,\om_2(\oo_\hb)+\ldots$,
 associated with the extension
\eqref{bk_ext}.

\begin{lemma}
\label{quantizelift}
A choice of  line bundle $L$ on $Y$ and its deformation 
quantization $L_\hb $ is equivalent to the choice of a lift 
of the torsor $\pp_\J$ to a transitive Harish-Chandra torsor $\pp_{\D,\mm}$ over
$\langle \Aut(\D, \mm), \Der(\D,\mm)\rangle$.
\end{lemma}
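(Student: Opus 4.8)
The plan is to set up a dictionary between the three pieces of data --- a line bundle $L$ on $Y$, a deformation $L_\hb$ of $(i_Y)_*L$ as an $\oo_\hb$-module, and a lift of $\pp_\J$ to a transitive $\langle\Aut(\D,\mm),\Der(\D,\mm)\rangle$-torsor --- and to do so point-by-point, using the local rigidity result of Lemma \ref{wellknown} as the crucial gluing input. First I would observe that, given $L$ and $L_\hb$, at each closed point $y\in Y$ the completion $L_{y,\hb}$ of $L_\hb$ is a complete topological finitely generated module over $\oo_{y,\hb}\cong\D$, without $\hb$-torsion (flatness over $\k[[\hb]]$), and it satisfies $L_{y,\hb}/\hb L_{y,\hb}\cong \widehat{(i_Y)_*L}_y\cong\A/\A\y$ once we fix an identification of the local data with the model $(\D,\J)$ coming from the point of $\pp_\J$. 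By Lemma \ref{wellknown}, every isomorphism $L_{y,\hb}/\hb L_{y,\hb}\cong\A/\A\y$ lifts to an isomorphism $L_{y,\hb}\cong\mm$ of $\D$-modules, and the collection of all such compatible local trivializations $(y,\eta,\text{triv})$ forms the fibre of a torsor over the Harish-Chandra pair $\langle\Aut(\D,\mm),\Der(\D,\mm)\rangle$. The forgetful morphism $\forget$ sends this torsor to $\pp_\J$ (here one uses Lemma \ref{inc} / Corollary \ref{DM-exact} to know that an automorphism of $\D$ preserving the module structure preserves $\J=\ann(\mm/\hb\mm)$), so it is a lift of $\pp_\J$; transitivity follows because the identifications $L_{y,\hb}\cong\mm$ vary transitively by Lemma \ref{wellknown} (the ambiguity being exactly $\Aut(\D,\mm)$) together with the already-known transitivity of $\pp_\J$.

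Next I would construct the inverse direction: from a transitive lift $\pp_{\D,\mm}$ over $\langle\Aut(\D,\mm),\Der(\D,\mm)\rangle$ one recovers the module $L_\hb$ by the standard Harish-Chandra / formal-geometry descent (Beilinson--Drinfeld, as used throughout Section \ref{sec4}): the $\Aut(\D,\mm)$-representation $\mm$ gives an associated pro-coherent sheaf on $Y$, and the flat connection coming from the $\Der(\D,\mm)$-action, combined with the compatibility with $\pp_\hb$ under $\forget$ followed by the inclusion $\langle\Aut(\D)_\J,\Der(\D)_\J\rangle\hookrightarrow\langle\Aut(\D),\Der(\D)\rangle$, equips it with the structure of an $\oo_\hb$-module flat over $\k[[\hb]]$; reducing mod $\hb$ recovers $(i_Y)_*L$ with $L$ the line bundle associated to the $\Aut(\D,\mm)$-module $\mm/\hb\mm=\A/\A\y$ (a line bundle because $\A/\A\y\cong\k[[x_1,\dots,x_n]]$ is free of rank one over its natural subalgebra and the relevant quotient of the structure group acts through $\gl(\y)$, i.e.\ through a character on the determinant line). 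One must check these two constructions are mutually inverse, which is essentially bookkeeping: descent followed by taking completions at points returns the original torsor of trivializations, and trivializing then descending returns the original module, by uniqueness in Lemma \ref{wellknown}.

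The main obstacle, and the step I would spend the most care on, is verifying that the ``module with flat connection along $\pp_\J$'' obtained by descent genuinely carries an $\oo_\hb$-module structure over the \emph{sheaf} $\oo_\hb$ on $X$ --- not merely a formal/jet-level structure at points of $Y$ --- and, dually, that an abstract $\oo_\hb$-module deformation of $(i_Y)_*L$ really is supported on $Y$ with the completion at each $y$ isomorphic to $\mm$ rather than to some other $\D$-module with the same reduction mod $\hb$. The first point is handled by noting that $\pp_{\D,\mm}$ lies over $\pp_\J$, which in turn lies over $\pp_\hb$, so the module is automatically a module over the sheaf descended from $\pp_\hb$, namely $\oo_\hb$ itself; the second point is exactly where Lemma \ref{wellknown} does the work, since it promotes the mod-$\hb$ isomorphism to an honest $\D$-module isomorphism with no further hypotheses. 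I would also remark that ``finitely generated'' and ``complete'' in the hypothesis of Lemma \ref{wellknown} hold automatically here because $(i_Y)_*L$ is coherent and $L_\hb$ is $\hb$-adically complete by assumption, so no extra argument is needed. Once these points are in place, the equivalence of Lemma \ref{quantizelift} follows, and it is this lemma that will be combined in the next subsection with Lemma \ref{bij}, Proposition \ref{Lie-extensions}, Proposition \ref{jdm}, and Lemmas \ref{loc_j}, \ref{alat} to deduce Theorem \ref{maintheorem}.
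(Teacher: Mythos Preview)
Your proposal is correct and follows essentially the same route as the paper: in the forward direction you complete $L_\hb$ at each $y\in Y$, invoke Lemma \ref{wellknown} to identify it with $\mm$ over a chosen $\eta\in\pp_\J$, and take the collection of all such identifications as the lifted torsor; in the reverse direction you form the associated bundle to the $\Aut(\D,\mm)$-representation $\mm$, use the $\Der(\D,\mm)$-action to equip it with a flat connection, and recover $L_\hb$ as the sheaf of flat sections. This is exactly what the paper does, and your extra care about why the descended object is a genuine $\oo_\hb$-module (via compatibility with $\pp_\hb$) and why the completions are $\mm$ (via Lemma \ref{wellknown}) is appropriate.

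One small correction: the line bundle $L$ is \emph{not} recovered from $\pp_{\D,\mm}$ via the determinant character $p\mapsto\det(p|_\y)$ on the $P$-factor, but rather via the canonical character $\varkappa:\Aut(\D,\mm)\to\k^\times$ of \eqref{can_hom}, i.e.\ projection onto the $\eps_{\Aut}(\k^\times)$ factor in Claim \ref{2semidirect}. These differ precisely by the $\half\Tr$ discrepancy of \eqref{error}, which is the source of the $\half c_1(K_Y)$ term later. This slip does not affect the present lemma (you only need to know that \emph{some} line bundle is recovered), but it would propagate into the proof of Theorem \ref{maintheorem} if not corrected.
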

\begin{proof} For $y\in Y$, let $\eta: \oo_{y,\hb}\cong \D$
be an isomorphism such that $\eta(\J_y)=\J$.
Given a line bundle $L$ and its quantization  $L_\hb $,  let
$L_y:=\oo_{Y,y}\o_{\oo_Y}L$, resp.
$L_{y,\hb}:=\oo_{y,\hb}\o_{\oo_\hb}L_\hb $.
A choice
of local section of $L$ at $y$ provides
an isomorphism $L_y\cong  \oo_{Y,y}$.
We obtain a chain of
 isomorphisms 
\[L_{y,\hb}/\hb\,L_{y,\hb}\cong 
L_y\cong \oo_{Y,y}\cong
\oo_{y,\hb}/(\hb\,\oo_{y,\hb}+\J_y)\cong
\D/\J\cong\mm/\hb\,\mm,
\]
where the fourth isomorphism is induced by
the isomorphism $\eta$.
Thus, applying Lemma \ref{wellknown}, we deduce
that the $\D$-module $\eta^*L_{y,\hb}$, obtained from $L_{y,\hb}$ by 
transporting the module structure via $\eta$, is
isomorphic to $\mm$. Various choices of an isomorphism
$\eta^*L_{y,\hb}\cong\mm$ for all $y\in Y$ give
the required lift of the torsor $\pp_\J$ to a transitive Harish-Chandra torsor $\pp_{\D,\mm}$ over
$\langle \Aut(\D, \mm), \Der(\D,\mm)\rangle$.

In the opposite direction, let  $\pp_{\D,\mm}$ a lift of $\pp_\J$.
Then $\Aut(\D, \mm)$ acts on $\mm$ and, therefore, one has
a vector bundle $\mm_\pp$ associated to
the $\Aut(\D, \mm)$-module $\mm$ and  $\pp_{\D,\mm}$,
viewed as an $\Aut(\D, \mm)$-torsor.
Moreover, the Lie algebra action of $\Lie(\Aut(\D, \mm))$
extends to the action of the full algebra 
 $\Der(D, M) \simeq \G_\J$ which implies that
the bundle $\mm_\pp$ admits a flat algebraic connection.
Now $L_\hb $ may be recovered as the sheaf of flat sections with 
respect to this connection.

Finally, we note that the (non-quantized) line bundle $L$ 
may also be recovered from $\pp_{\D,\mm}$.
Specifically, one has an isomorphism
$L\cong \kt\o_\varkappa\pp_{\D,\mm}$,
of $\k^\times$-torsors on $Y$,
where
$\kt\o_\varkappa\pp_{\D,\mm}$
denotes the push-out of the torsor $\pp_{\D,\mm}$
via the canonical homomorphism $\varkappa: \Aut(\D, \mm) \to \kt $,
see \eqref{can_hom}.
\end{proof}

By Lemma \ref{quantizelift} a choice of \blue{a} quantized line bundle $L_\hb $ on $Y$ is 
equivalent to a choice of the following data:
\begin{align}
&\begin{array}{l}\bullet\ \ \text{{A lift  of the $\langle
  \Aut(\D)_\J,\Der(\D)_\J\ra$-torsor
$\pp_\J$ to a transitive}}\\
\text{{\quad torsor $\bar\pp_{\D,\mm}$ over $\langle
  \oa,\od\rangle$.}}
\end{array}\label{barp}\\
&\begin{array}{l}\bullet\ \ \text{{A lift  of   $\bar\pp_{\D,\mm}$ to a transitive 
$\langle \Aut(\D, \mm), \Der(\D, \mm)\ra$-torsor}}\\
\text{{\quad  $\pp_{\D,\mm}$
such that one has an isomorphism $\ L\cong \kt\o_\varkappa\pp_{\D,\mm}$.}}
\end{array}\label{barpp}
\end{align}

\begin{lemma}\label{step1} The existence of a lift $\bar\pp_{\D,\mm}$,
as in \eqref{barp}, is equivalent to an equation
$\ i_Y^*(\hb^2\,\om_2(\oo_\hb)+\hb^3\,\om_3(\oo_\hb)+\ldots)=0$ in $H^2(Y)[[\hb]]$.
\end{lemma}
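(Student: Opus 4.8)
The plan is to recognize that the lift problem in \eqref{barp} is exactly the kind of lifting problem governed by the $\Loc$ construction of \S\ref{ssec4.1}, and then to identify the relevant obstruction class with the one already computed in Lemma \ref{loc_j}. More precisely, the Harish-Chandra pair $\la\oa,\od\ra$ sits in the central extension \eqref{barphi}
\[
1\to \la K^\times/\k^\times, K/\k\ra \to \la\oa,\od\ra \to \la\Aut(\D)_\J,\Der(\D)_\J\ra\to1,
\]
which via the isomorphism $\la\bar\Phi_{\D,\mm},\bar\varphi_{\D,\mm}\ra$ of Proposition \ref{Lie-extensions} is identified with the extension \eqref{K/k} defining $\la\bar\GG_\J,\bar\G_\J\ra$. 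Since obstruction classes $\Loc(-,-)$ depend only on the isomorphism class of the central extension of Harish-Chandra pairs together with the torsor $\pp_\J$ being lifted, the obstruction to lifting $\pp_\J$ to a transitive $\la\oa,\od\ra$-torsor coincides with $\Loc(\pp_\J,\bar\GG_\J,\bar\G_\J)$.

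First I would spell out that the lift $\bar\pp_{\D,\mm}$ in \eqref{barp} is precisely a transitive $\la\oa,\od\ra$-torsor whose push-out along $\la\oa,\od\ra\to\la\Aut(\D)_\J,\Der(\D)_\J\ra$ is $\pp_\J$; this is the definition of a lift of a Harish-Chandra torsor along a morphism of Harish-Chandra pairs. By the cited property of the $\Loc$ construction (\cite[Proposition 2.7]{BK}, recalled in \S\ref{ssec4.1}), such a lift exists if and only if $\Loc(\pp_\J, \bar\GG_\J,\bar\G_\J)=0$ in $(K/\k)\otimes H^2_\dr(Y)$. Next I would invoke Lemma \ref{loc_j}, which computes exactly this class: $\Loc(\pp_\J,\bar\GG_\J,\bar\G_\J)=i_Y^*(\hb^2\,\om_2(\ohb)+\hb^3\,\om_3(\ohb)+\ldots)$. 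Combining these two facts immediately gives the asserted equivalence: the lift $\bar\pp_{\D,\mm}$ exists if and only if $i_Y^*(\hb^2\,\om_2(\oo_\hb)+\hb^3\,\om_3(\oo_\hb)+\ldots)=0$ in $H^2(Y)[[\hb]]$. Here one should note that $K/\k\cong\hb\k[[\hb]]$, so $(K/\k)\otimes H^2_\dr(Y)$ matches the space $\hb H^2(Y)[[\hb]]$ in which the right-hand side lives, and the vanishing statements in the two models are the same.

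The main obstacle, such as it is, lies in carefully matching up the central extension underlying $\la\oa,\od\ra$ with the one underlying $\la\bar\GG_\J,\bar\G_\J\ra$ used in Lemma \ref{loc_j}. This is where Proposition \ref{Lie-extensions} and the induced diagram \eqref{barphi} do the work: they furnish an explicit isomorphism of central extensions of Harish-Chandra pairs covering the identity on $\la\Aut(\D)_\J,\Der(\D)_\J\ra$ and on the kernel $\la K^\times/\k^\times, K/\k\ra$. Transporting the torsor $\pp_\J$ (which lives over the common base $\la\Aut(\D)_\J,\Der(\D)_\J\ra$ and is unchanged by this identification) through this isomorphism, one concludes that the two obstruction classes literally agree. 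There is nothing deep here beyond bookkeeping with the factors of $\{\pm1\}$ and the $\sqrt{\det(p|_\y)}$ appearing in Proposition \ref{Lie-extensions}, but it is the one place where some care is needed; everything else is a direct appeal to Lemma \ref{loc_j} and the defining property of $\Loc$.
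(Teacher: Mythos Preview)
Your proposal is correct and follows essentially the same approach as the paper: use diagram \eqref{barphi} to identify the two central extensions, so that lifting $\pp_\J$ to $\la\oa,\od\ra$ is equivalent to lifting it to $\la\bar\GG_\J,\bar\G_\J\ra$, and then invoke the $\Loc$ criterion together with Lemma \ref{loc_j}. Your extra remark about $\{\pm1\}$ and $\sqrt{\det(p|_\y)}$ is unnecessary here, since after passing to the quotients by $\k^\times_\GG$ and $\eps_{\Aut}(\k^\times)$ in \eqref{barphi} those subtleties have already been absorbed.
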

\begin{proof} Diagram \eqref{barphi} provides an isomorphism
of central extensions of \hc pairs.
Therefore,  the torsor $\pp_\J$ can be lifted
to a transitive $\la\oa,\od\ra$-torsor if and only
if it can be lifted
to a transitive $\la\bar\GG_\J,\bar\G_\J\ra$-torsor.
The latter holds if and only
if the class $\Loc(\pp_\J,\bar\GG_\J,\bar\G_\J)$
vanishes, see \S\ref{ssec4.1}. The result now follows from
Lemma \ref{loc_j}.
\end{proof}

\subsection{Proof of Theorem \ref{maintheorem}} 
From now on, we assume that the equation
$\ i_Y^*(\hb^2\,\om_2(\oo_\hb)+\hb^3\,\om_3(\oo_\hb)+\ldots)=0$ holds
and hence  there is a torsor  $\bar\pp_{\D,\mm}$,
over $\la\oa,\od\ra$,  as in
\eqref{barp}.

\begin{lemma} The  $\la\oa,\od\ra$-torsor
$\bar\pp_{\D,\mm}$ can be lifted to a 
$\la\Aut(\D,\mm), \Der(\D,\mm)\ra$-torsor
$\pp_{\D,\mm}$, as in \eqref{barpp},  if and only if 
one has: $c_1(L)$ $=\half c_1(K_Y)+
At(\oo_\hb,Y)$.
\end{lemma}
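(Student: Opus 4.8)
The plan is to turn the existence of the lift $\pp_{\D,\mm}$ into an identity of Atiyah--Chern classes in $H^2(\Om^{\geq1}_Y)$, and then to evaluate the right‑hand side of that identity by combining Proposition~\ref{jdm} with Lemmas~\ref{alat}, \ref{can} and \ref{1m}.

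First I would apply Lemma~\ref{bij} to the central extension $1\to\la\k^\times,\k\ra\to\la\Aut(\D,\mm),\Der(\D,\mm)\ra\to\la\oa,\od\ra\to1$ equipped with the group splitting $\io_{\Der}$ of \eqref{dec_der}. That lemma puts transitive $\la\Aut(\D,\mm),\Der(\D,\mm)\ra$-torsors lifting the fixed torsor $\bar\pp_{\D,\mm}$ in bijection with $\k^\times$-torsors $L'$ on $Y$ satisfying $c_1(L')=\alpha(\bar\pp_{\D,\mm},c_{\Der},\io_{\Der})$, and it sends a lift $\pp_{\D,\mm}$ to the push-out of $\pp_{\D,\mm}$ along the first projection of \eqref{dec_der} --- which is precisely the homomorphism $\varkappa$ of \eqref{can_hom}. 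Hence, by the description \eqref{barpp}, a lift $\pp_{\D,\mm}$ with $L\cong\kt\o_\varkappa\pp_{\D,\mm}$ exists if and only if
\[ c_1(L)=\alpha\big(\bar\pp_{\D,\mm},c_{\Der},\io_{\Der}\big)\quad\text{in }H^2(\Om^{\geq1}_Y),\]
so the task reduces to identifying this class with $\half c_1(K_Y)+At(\oo_\hb,Y)$.

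Next I would apply Proposition~\ref{jdm} with $Z=\bar\pp_{\D,\mm}$ to write
\[\alpha\big(\bar\pp_{\D,\mm},c_{\Der},\io_{\Der}\big)=\alpha\big(\bar\Phi_*\bar\pp_{\D,\mm},c_\J,\io_\J\big)+\half\,c_1\big(L_{\bar\pp_{\D,\mm}}\big),\]
and treat the two summands separately. For the first, observe that $\bar\pp_{\D,\mm}$, hence $\bar\Phi_*\bar\pp_{\D,\mm}$, is a lift of $\pp_\J$ over a prounipotent central extension: the kernel $\ker(\bar\GG_\J\to\Aut(\D)_\J)=1+\hb K$ is central, so the adjoint action of $\bar\GG_\J$ on $\G_\J$ factors through $\Aut(\D)_\J$. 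Since, by Remark~\ref{io_rem}, the $\alpha$-class only involves this adjoint action and the differential $d\io_\J$ --- both pulled back from the $\Aut(\D)_\J$-level --- equivariant descent of Picard algebroids performed in two stages (first along $\bar\Phi_*\bar\pp_{\D,\mm}\to\pp_\J$, then along $\pp_\J\to Y$, with the central factor $\hb K$ contributing nothing) identifies $\alpha(\bar\Phi_*\bar\pp_{\D,\mm},c_\J,\io_\J)$ with the class $\alpha(\pp_\J,\wt c,\wt\io)$ of Lemma~\ref{alat}(i), namely $At(\oo_\hb,Y)$. For the second, $L_{\bar\pp_{\D,\mm}}$ is by construction the line bundle on $Y$ associated to $\pp_\J$ through the character $g\mapsto\det(\tau(g)|_\y)$; using Lemma~\ref{can} and the Lagrangian identification of the conormal bundle of $Y$ with $\T_Y$, this is $K_Y$, so $c_1(L_{\bar\pp_{\D,\mm}})=c_1(K_Y)$. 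Feeding both into the displayed equality gives $\alpha(\bar\pp_{\D,\mm},c_{\Der},\io_{\Der})=At(\oo_\hb,Y)+\half c_1(K_Y)$, and the first step concludes.

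The hard part is this second step, for two reasons. First, one must carefully reconcile the two a priori different central extensions that enter --- $c_\J$, with Lie algebra part $\G_\J$, of $\la\bar\GG_\J,\bar\G_\J\ra$, versus the extension $\wt c$ of Lemma~\ref{alat}, with Lie algebra part $\G_\J/\hb K$, of $\la\Aut(\D)_\J,\Der(\D)_\J\ra$ --- together with their splittings (both induced from \eqref{dec_J}), and genuinely verify that the Picard algebroids they produce on $Y$ agree, i.e.\ that the $\alpha$-invariant does not change when $\pp_\J$ is replaced by a lift of it along a prounipotent central extension. Second, the sign and normalization bookkeeping around $L_{\bar\pp_{\D,\mm}}\cong K_Y$ must be carried out with care: it is precisely the factor $\half$ in Lemma~\ref{1m} (equivalently, the discrepancy $\half\Tr(a|_\y)$ of \eqref{error} and the attendant $\sqrt{\det(p|_\y)}$ of Proposition~\ref{Lie-extensions}) that forces a \emph{half}-canonical, rather than a full-canonical or trivial, twist, and it has to be tracked consistently through Proposition~\ref{jdm} so that the two occurrences of $c_1(K_Y)$ combine to $+\half c_1(K_Y)$.
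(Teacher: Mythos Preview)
Your proposal is correct and follows essentially the same route as the paper: apply Lemma~\ref{bij} to reduce to $c_1(L)=\alpha(\bar\pp_{\D,\mm},c_{\Der},\io_{\Der})$, invoke Proposition~\ref{jdm} to rewrite this as $\alpha(\bar\Phi_*\bar\pp_{\D,\mm},c_\J,\io_\J)+\half c_1(K_Y)$, and then use equivariant descent along the central $(1+\hb K)$ together with Lemma~\ref{alat}(i) to identify the first term with $At(\oo_\hb,Y)$. The paper's own proof is terser---it simply asserts the identification $L_{\bar\pp_{\D,\mm}}\cong K_Y$ and the descent step without the detailed justification you give---but the skeleton is the same.
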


\begin{proof} By \eqref{dec_der} any lift $\pp_{\D,\mm}$
must be isomorphic as an $\Aut(\D, \M)$-torsor to 
$L\times_Y\bar\pp_{\D,\mm}$
for some line bundle $L$ on $Y$. By Lemma \ref{bij} this 
$\Aut(\D, \M)$-torsor structure extends to the structure 
of a transitive \hc torsor over $\la\Aut(\D,\mm), \Der(\D,\mm)\ra$
if and only if 
$ c_1(L) = \alpha(\bar\pp_{\D,\mm}, c_{\Der}, \io_{\Der})$
in $H^2(Y, \Om^{\geq 1}_Y)$. 

From Proposition \ref{jdm} the latter class is also equal to 
$\alpha(\bar\Phi_*\bar\pp_{\D,\mm}, c_\J, \io_\J)+\half c_1(K_Y)$. 
By equivariant descent with respect to $(1 + \hb K)$ and 
Lemma \ref{alat} we identify the first of the two terms as
$At(\oo_\hb, Y)$. 

We conclude
that the equation $c_1(L)=\half c_1(K_Y)+
At(\oo_\hb,Y)$ holds if and only if  the $\la\oa,\od\ra$-torsor
$\bar\pp_{\D,\mm}$ can be lifted to a torsor
$\pp_{\D,\mm}$ as in \eqref{barpp}.
\end{proof}

\bigskip
We now discuss the set of isomorphism classes
of the lifts $\pp_{\D,\mm}$ for a fixed 
$\overline{\pp}_\J$ to , assuming it is non-empty. Assume that
$L$ is a flat $(\mathcal{O}_Y[[\hb]])^\times$ torsor (but we do not 
fix a choice of a flat connection), then 
$\pp_{\D,\mm}(L) := 
(\pp_{\D,\mm} \times_Y L)/ K^\times$ is again a  torsor over
$\Aut(\D, \M)$, since $K^\times$ is central in $\Aut(\D, \M)$. 
The fact that this extends to a transitive \hc torsor structure 
on $\pp_{\D,\mm}(L)$ can be established as follows. 
We have a direct product decomposition
 $(\mathcal{O}_Y[[\hb]])^\times\ \cong\ \oo_Y^\times \times 
(1 + \hb \oo_Y)$. Therefore, choosing $L$ is equivalent to choosing
a pair $(L_0, L_1)$ consisting of a flat $\oo_Y^\times$-torsor $L_0$
and a flat $(1 + \hb \oo_Y)$-torsor $L_1$. 

It is clear that
the lift $\bar\pp_{\D,\mm}$ can be adjusted by passing to 
$\bar\pp_{\D,\mm}(L_1)$ (which is defined similarly), since $L_1$ has
a trivial class in $\hb H^2_\dr(Y)[[\hb]]$. 
By Proposition 2.7 in \cite{BK}, any lift of $\pp_\J$ to 
 a $\la\oa,\od\ra$-torsor has the form $\bar\pp_{\D,\mm}(L_1)$
for a unique $L_1$.
Similarly, with fixed choice of $\bar\pp_{\D,\mm}$, any two  lifts to a 
$\la\Aut(\D,\mm), \Der(\D,\mm)\ra$-torsor
$\pp_{\D,\mm}$ differ by a twist by a unique 
$L_0$, as follows from Lemma
\ref{bij}. Hence, every lift of $\pp_\J$ is isomorphic to 
$\pp_{\D,\mm}(L)$ for a unique flat $(\mathcal{O}_Y[[\hb]])^\times$-torsor
$L$, as required. This finishes the proof of Theorem 
\ref{maintheorem}. \qed

\subsection{Final remarks}

Given a line bundle $L$, one $Y$,
one can consider a problem of quantization of $L$
up to a finite order in $\hb$. That is, for
any $s=1,2,\ldots$, one can
study deformations of $L$ to an
$\mathcal{O}_\hb / \hb^{s+1} \mathcal{O}_\hb $-module $L_s$, which is 
flat over $\k[\hb]/\hb^{s+1}$. 
The corresponding `finite order'
counterpart of Theorem \ref{maintheorem} is more complicated, in a sense,
than Theorem \ref{maintheorem} itself.

To explain this, for each $N\geq 1$, consider the following condition
\[(\star_N):\quad
c_1(L) - \frac{1}{2} c_1(K_Y) =At(\ohb,Y)
\en\ \&\en\ i_Y^* \omega_i(\ohb)=0
\en\forall i=1,\ldots,N.\]
Then, it turns out that one has  implications
\[(\star_{s+1})
\quad\Rightarrow\quad
\exists\, L_s
\quad\Rightarrow\quad
(\star_{s-1}),
\]
however, none of the two implications is an equivalence, in general.
The origin of this phenomenon comes from the
fact that the classes in the sequence $\omega_i(\ohb),\ i=1,2,
\ldots$, are, essentially, the obstructions to extending
a torsor over the \hc pair $\la\G/\G^{\geq i},\GG/\GG^{\geq i}\ra$
to a torsor over  $\la\G/\G^{\geq i+1},\GG/\GG^{\geq i+1}\ra$.
Thus, the  sequence  $\omega_i(\ohb),\ i=1,2,
\ldots$, is closely related to the descending filtration
$D^{\geq i}$ on the algebra $D$.
On the other hand, associated with the choice of a Lagrangian subspace
$\y$ there is   another
descending filtration, $F_\y^{\geq i}D$, on $D$. It is defined  as the
multiplicative filtration on the enveloping algebra of the Heisenberg
algebra $\v\oplus\k\hb$ induced by the 3-step 
filtration 
\[F^0=\v\oplus\k\hb\ \supset\ F^1=\v\ \supset\ F^2=\y,\]
 on the vector space  $\v\oplus\k\hb$.
The obstructions for the existence of
finite order deformations of the line bundle $L$
are more naturally related to the filtration $F_\y^{\geq i}D$
rather than to $D^{\geq i}$.

We illustrate the above in the case $s=1$.
Assume for simplicity that
$Y$ is (smooth)  projective and that the sheaf
 $\mathcal{O}_\hb / \hb^{2} \mathcal{O}_\hb $
splits globally as $\mathcal{O}_X + \hb \mathcal{O}_X$. Then,
by Hodge theory, the cohomology group $H^2(\Om_Y^{\geq1})$ is a subspace
of $H^2_\dr (Y)$ and we have $At(\oo_{\hb}, Y) = i^*_Y \omega_1(\oo_\hb)$. 

By \cite{BGP}, a first order deformation exists Zariski 
locally on $X$ if and only if $\omega|_Y = 0$, which corresponds to 
$i =0$ and a vanishing in the $H^0(Y, \Omega_Y^2)$ component of
$H^2_\dr (Y)$.  The local first order deformations can be adjusted 
so that they are isomorphic on the double intersections of 
our Zariski covering, if and only if
$2 c_1(L) = c_1(K_Y)$ in $H^1(Y, \Omega_Y^1)$, which is 
a part of our equation on $i^*_Y \omega_1(\oo_\hb)$ (since it is 
represented by a closed 2-form on $Y$ under our assumptions
and the projection onto $H^1(Y, \Omega^1_Y)$ is zero). 

Further, the 
isomorphisms on double intersections can be chosen to 
satisfy the cocycle condition on triple intersections precisely 
when a certain class in $H^2(Y, \mathcal{O}_Y)$ vanishes. 
In \cite{BGP} we were unable to evaluate this class explicitly, 
but very recently a formula for it was found by K. Behrend and
B. Fantechi and we expect that the corresponding vanishing
condition is the $H^2(Y, \mathcal{O}_Y)$ component of
$i^*_Y \omega_2(\oo_\hb) = 0$. 

Finally, by \cite{BGP}, the $H^0(Y, \Omega_Y^2)$ component of 
the equation for $i^*_Y \omega_1(\oo_\hb)$ is precisely the condition that
$L$ admits a local second order deformation. Again, local 
second order deformations can be globalized when the projections
of $i^*_Y \om_2(\oo_\hb)$ to $H^1(Y, \Om^1_Y)$ and $i^*_Y \om_3(\oo_\hb)$
to $H^2(Y, \oo_Y)$ vanish, respectively, and so on.

%%%%%%%%%%%%%%%BIBLIOGRAPHY%%%%%%%%%%%%%%%%%%%%%%%%%%%
%%bbb
\bibliographystyle{plain}

\end{document}